\documentclass[twoside,a4paper,11pt]{article}
\usepackage[]{setspace}
\usepackage{amsmath, amssymb, amsthm, sectsty}
\usepackage{graphicx}%
\usepackage{bigints}
\usepackage[in]{fullpage}
\usepackage{chngcntr}
\usepackage{mathtools}
\usepackage{chngcntr}
\usepackage{cite}	
\usepackage{hyperref,xcolor}
\usepackage[OT1]{fontenc}
\usepackage{titlesec,titletoc}
\usepackage{tabu,longtable}
\usepackage{fancyhdr,verbatim}


\headheight=15pt
\headsep=30pt
\footskip=0pt

\textwidth=6.65in
\textheight=9.7in 
\hoffset-0.32in
\voffset-0.5in
\numberwithin{equation}{section}

\headheight=15pt
\headsep=30pt
\footskip=0pt
\pagestyle{empty}

\definecolor{dark-green}{rgb}{0.1,1,0.3}
\hypersetup{
colorlinks=true,
    pdfborder={0 0 0},
citecolor=dark-green,
linkcolor=blue
}

\newcommand{\ab}[1]{\left\lvert #1 \right\rvert}
\newcommand{\md}[1]{(\text{mod}\ #1)}
\newcommand{\sg}{\sigma}
\newcommand{\al}{\alpha}
\newcommand{\eps}{\epsilon}
\newcommand{\dl}{\delta}
\newcommand{\kp}{\kappa}
\newcommand{\lb}{\lambda}

\newcommand{\mb}{\mathbb}
\newcommand{\bt}{\beta}
\newcommand{\gm}{\gamma}
\newcommand{\vp}{\varphi}
\newcommand{\ds}{\displaystyle}
\newcommand{\lm}{\limits}
\newcommand{\st}{\substack}
\newcommand{\cg}{\equiv}
\newcommand{\nd}{\noindent}

\newcommand{\ov}{\overline}
\newcommand{\sr}{\sqrt}

\newcommand{\fr}{\frac}

\newcommand{\Og}{\Omega}
\newcommand{\sd}{\sideset}
\newcommand{\tps}{\texorpdfstring}

\newtheorem{thm}{Theorem}[section]
\newtheorem{cor}[thm]{Corollary}
\newtheorem{lem}[thm]{Lemma}
\newtheorem{prop}[thm]{Proposition}

\theoremstyle{definition}
\newtheorem{defn}[thm]{Definition}
\newtheorem{ntn}[thm]{Notation}
\newtheorem{rem}[thm]{Remark}


\fancyhf{}
\fancyhead[LE,RO]{\thepage}
\fancyhead[LO]{ \rightmark}
\fancyhead[CO]{}
\fancyhead[CE]{}
\fancypagestyle{plain}{%
\fancyhead{} 
}

\AtEndDocument{\bigskip{\normalsize%
  \textsc{The Institute of Mathematical Sciences, Chennai} \par
	\nd
  \textit{E-mail address}, R. Balasubramanian: \texttt{balu@imsc.res.in} \par
	\vskip 0.1in
	\nd
	\textsc{The Institute of Mathematical Sciences, Chennai} \par  
	\nd
  \textit{E-mail address}, Priyamvad Srivastav: \texttt{priyamvads@imsc.res.in} \par
}}

\allsectionsfont{\centering}
\begin{document}
\title{On Selberg's approximation to the twin prime problem}
\author{R. Balasubramanian and Priyamvad Srivastav}
\date{}
\maketitle{}
\bigskip
\begin{abstract}
In his Classical approximation to the Twin prime problem, Selberg proved that for $x$ sufficiently large, there is an $n \in (x,2x)$ such that $2^{\Og(n)}+2^{\Og(n+2)} \leq \lb$ with $\lb=14$, where $\Og(n)$ is the number of prime factors of $n$ counted with multiplicity. This enabled him to show that for infinitely many $n$, $n(n+2)$ has atmost $5$ prime factors, with one having atmost $2$ and the other having atmost $3$ prime factors. By adopting Selberg's approach and using a refinement suggested by Selberg, we improve this value of $\lb$ to about $\lb=12.59$.
\end{abstract}
\bigskip
\section{Introduction}
The Twin prime conjecture is one of the oldest unsolved problems in Number theory. A statement very simple to understand has eluded numerous attempts by the most adept of mathematicians. In fact, we seem nowhere close to settling this problem. 
The best known approximation to this problem is due to J.R. Chen\cite{JRC} in 1973, which states that there are infinitely many primes $p$, for which $p+2$ has atmost two prime factors. \vskip 0.01in
In the last few years, there have been major developments on a related problem, called the Bounded gaps problem. This problem asks whether the quantity $H_m=\liminf\lm_{n \to \infty} (p_{n+m}-p_n)$ is finite and provide 
an upper bound for the same. In 2006, Goldston, Pintz and Yildirim(GPY) in \cite{GPY} proved
$$ \liminf\lm_{n \to \infty} \fr{p_{n+1}-p_n}{\log p_n}=0$$
thus settling a long standing conjecture. More interesting than their result was the method they used to arrive at this result. This method, known as the GPY sieve method, 
caught the attention of many experts in the field. In the same year 2006, Y. Motohashi and J. Pintz \cite{GPYs}, introduced a smoothed version of the GPY sieve. Their method just fell short of proving the bounded 
gaps problem. It was finally in 2013 that Zhang \cite{YZ} showed $H_1 \leq 7 \times 10^7$, marking an important breakthrough in the subject. This bound was subsequently lowered to about $4680$ by the Polymath 8a \cite{PM8a} project. 
In his paper, Zhang used the GPY sieve, alongwith a modified version of the Bombieri-Vinogradov Theorem. It was the modified version of the Bombieri Vinogradov Theorem, which allowed him to deduce his results.\\  \vskip 0.03in
Recently, in his preprint titled `small gaps between primes', James Maynard \cite{JM} introduced a refinement to the $GPY$ method by using multidimensional sieve weights. 
Under this refinement, Maynard shows that $H_1 \leq 600$ and that $H_m \ll m^3e^{4m}$. This result was also proved independently by Tao. In his paper, Maynard has only used the 
Bombieri-Vinogradov Theorem and the paper does not incorporate any of the technology involved in the Zhang's paper. The Polymath 8b \cite{PM8b} project, which extends Maynard's methods, has successfully brought down $H_1$ to about $H_1 \leq 246$. 
Under the Generalised Elliott-Halbertsam (GEH) conjecture, Polymath 8b shows that $H_1 \leq 6$. \\ \vskip 0.01in 
The use of multidimensional sieve weights actually dates back to Atle Selberg, who suggested their use in his approximations to the Twin prime problem \cite[page 240]{AS}. 
In this unpublished manuscript, Selberg considers the sum
\begin{equation}
\label{1dsum}
\sum\lm_{\st{x<n \leq 2x\\n \cg -1 \md{6}}} \left( 1-\fr{2^{\Og(n)}+2^{\Og(n+2)}}{\lb} \right)
\left( \sum\lm_{\st{d \mid n(n+2)\\ d \leq z}}\lb_d \right)^2
\end{equation}
where $z=x^{1/3-\eps}$. He succeeded in showing that (\ref{1dsum}) is positive for any $\lb>14$. When this sum is positive, it means that there is an $x<n \leq 2x$ such that $2^{\Og(n)}+2^{\Og(n+2)} \leq 14$, which implies that $n(n+2) \in P_5$. 
Since this happens for all $x$, there are infinitely many $n$ for which $n(n+2) \in P_5$. \vskip 0.01in
Note that if $\lb$ can somehow be brought below $12$, one could show the existence of infinitely many $n$ for which $n(n+2) \in P_4$, since $12=2^2+2^3$. \vskip 0.07in
By adjusting the sieve weights suitably, Gerd Hofmeister(unpublished) was able to bring down $\lb$ to ``about $13$'', though Selberg is not clear about what exactly ``about $13$'' means.\vskip 0.01in
Further, in order to improve the value of $\lb$, Selberg \cite[page 245]{AS} suggested two-dimensional sieve weights of the form
\begin{equation} \left( \sum\lm_{\st{d_1 \mid n \\ d_2 \mid n+2}}\lb_{d_1,d_2} \right)^2  \label{2dweights} \end{equation}
\vskip 0.2in
In this paper, we adopt Selberg's approach and improve the value of $\lb$ by using the weights as given in (\ref{2dweights}). However, this approach falls somewhat short of bringing $\lb$ below $12$. \vskip 0.07in 
We consider the sum
\begin{equation} \label{msum}
\sum\lm_{\st{n \sim x\\n \cg v_0 \md{W}}}\left( 1-\fr{2^{\Og(n)}+2^{\Og(n+h)}}{\lb} \right) \left( \sum\lm_{\st{d_1 \mid n\\d_2 \mid n+h}}\lb_{d_1,d_2} \right)^2=S_1 - S_2 /\lb
\end{equation}
where $h$ is an even number and $v_0$ is chosen such that $(n(n+h),W)=1$. Following Maynard\cite{JM}, we have defined 
\begin{equation} D=\log\log\log{x} \quad \text{and} \quad W=\prod\lm_{p \leq D}p \label{DW} \end{equation}
The sieve weights $\lb_{d_1,d_2}$ are supported on
\begin{equation}
\label{support}
S(z)=\{(d_1,d_2): \mu^2(d_1d_2W)=1,  \max\{d_1^{2/3} d_2 , d_1d_2^{2/3}\} \leq z\} 
\end{equation}

In this paper, we show that
\begin{thm}
\label{mainthm}
(\ref{msum}) is positive for any $\lb>12.59$. 
\end{thm}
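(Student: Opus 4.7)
The strategy is to show that $S_1 > 0$ and to bound the ratio $S_2/S_1 < 12.59$, so that \eqref{msum}, being equal to $S_1 - S_2/\lb$, is positive for every $\lb > 12.59$. I would proceed along the lines of Selberg's one-dimensional analysis, adapted to the two-dimensional weights in \eqref{2dweights}, in four stages: (i) expand $S_1$ and $S_2$ into quadratic forms in $\lb_{d_1,d_2}$; (ii) evaluate the inner arithmetic sums over $n$ by the Chinese Remainder Theorem, using the $W$-trick to clear small-prime obstructions; (iii) diagonalise $S_1$ via a Selberg-type linear change of variables to new weights $y_{r_1,r_2}$; and (iv) parametrise $y_{r_1,r_2}$ by a smooth function $F(t_1,t_2)$ supported on the simplex dictated by \eqref{support} and minimise the resulting ratio of integral functionals.

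For $S_1$, squaring out and swapping the order of summation gives
\[ S_1 = \sum_{d_1,e_1,d_2,e_2}\lb_{d_1,d_2}\lb_{e_1,e_2}\#\{n\sim x: n\cg v_0\md{W},\,[d_1,e_1]\mid n,\,[d_2,e_2]\mid n+h\}. \]
The support condition $\max(d_1^{2/3}d_2,d_1 d_2^{2/3})\le z = x^{1/3-\eps}$, together with the $W$-trick (which forces $(d_1 e_1,d_2 e_2)=1$ and $(d_1 e_1 d_2 e_2, W) = 1$ on the support), allows a CRT evaluation of the inner count, giving $x/(W[d_1,e_1][d_2,e_2]) + O(1)$ with a harmless aggregate error. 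The main term factors as a quadratic form in $\lb$ with multiplicative coefficients over primes.

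For $S_2 = S_2^{(1)} + S_2^{(2)}$ (the contributions from $2^{\Og(n)}$ and $2^{\Og(n+h)}$ respectively), the extra ingredient is the convolution identity
\[ 2^{\Og(m)} = \sum_{a\mid m}g(a),\qquad g(p^k) = 2^{k-1}\ (k\ge 1), \]
whose Dirichlet series $\sum_a g(a)/a^s = \prod_p (1-p^{-s})/(1-2p^{-s})$, restricted to $(a,W)=1$, extends meromorphically across $\Re s = 1$ like $\zeta(s)\cdot H(s)$ for an absolutely convergent Euler product $H$. Inserting this convolution into $S_2^{(1)}$ introduces an extra divisibility $a\mid n$; a further CRT application turns the inner count into $x/(W[d_1,e_1,a][d_2,e_2])$, and summing over $a$ against $g(a)$ produces an additional logarithmic factor relative to $S_1$, together with a multiplicative kernel in $(d_1,e_1)$ coming from $g$. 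The sum $S_2^{(2)}$ is handled symmetrically by swapping the roles of $(d_1,e_1)$ and $(d_2,e_2)$.

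With $S_1$ and $S_2$ now cast as quadratic forms in $\lb$, the final step is Selberg-style diagonalisation followed by a variational optimisation. Writing $y_{r_1,r_2} = \mu(r_1)\mu(r_2)\sum_{r_1\mid d_1,\,r_2\mid d_2}\lb_{d_1,d_2}\cdot(\text{local factors})$, I expect $S_1$ to become a diagonal sum of $y_{r_1,r_2}^2$ weighted by an explicit multiplicative coefficient, while $S_2$ acquires a kernel encoding the $g$-convolution. Choosing $y_{r_1,r_2} = F(\log r_1/\log z,\,\log r_2/\log z)$ for a smooth $F$ supported on $\{(t_1,t_2):\max(\tfrac{2}{3}t_1+t_2,\,t_1+\tfrac{2}{3}t_2)\le 1\}$, and passing to the continuous limit as in the Maynard--Tao reduction, the ratio $S_2/S_1$ becomes an explicit ratio of bilinear integral functionals of $F$. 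The main obstacle I anticipate is precisely this final optimisation: the naive symmetric choice $F\equiv 1$ essentially recovers Selberg's bound $\lb = 14$, so extracting the improvement to $12.59$ will require a carefully chosen test family (for instance, symmetric polynomials of low degree in $(t_1,t_2)$), reducing the problem to a generalised eigenvalue problem for a small matrix whose numerical resolution supplies the claimed bound.
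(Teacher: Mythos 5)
Your outline of the overall strategy matches the paper's in structure: expand $S_1, S_2$ as quadratic forms, use the $W$-trick and CRT, change variables to diagonalise, pass to a smooth test function supported on the region $\max(\tfrac{2}{3}t_1+t_2, t_1+\tfrac{2}{3}t_2)\le 1$, and finish with a generalised eigenvalue problem for low-degree symmetric polynomials. But there is a genuine gap in your treatment of $S_2$, and it is the one place where the real analytic work of the paper happens.

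You propose writing $2^{\Og(m)}=\sum_{a\mid m}g(a)$ with $g(p^k)=2^{k-1}$, so that the Dirichlet series of $g$ behaves like $\zeta(s)$ near $s=1$; inserting this into $S_2$ then introduces an extra divisibility $a\mid n$, and you evaluate the inner count as $x/(W[d_1,e_1,a][d_2,e_2])+O(1)$. The difficulty is that $g$ is not a ``small'' function: $\sum_a g(a)/a$ diverges, so $a$ ranges effectively over all of $[1,x]$, and the aggregated $O(1)$ errors over $a$, multiplied by the number of $(d_1,e_1,d_2,e_2)$, swamp the main term (which is only of size $x\log^6 x/W\cdot(\vp(W)/W)^6$). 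Truncating the $a$-sum does not rescue this, because the tail of the main term and the head of the trivial error cannot both be made smaller than the main term. In short, the convolution $2^{\Og}=1*g$ gives you no control of $2^{\Og}(n)$ in arithmetic progressions to large moduli, and that control is exactly what $S_2$ requires since the moduli $[d_1,e_1]W$ reach $x^{1/2+}$ --- well beyond the Bombieri--Vinogradov range.

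The paper instead writes $2^{\Og(n)}=\sum_{d\mid n}a_d\,\tau(n/d)$, where $a$ has Dirichlet series $\zeta(2s)B(s)$, so $a$ is supported essentially on squares and $\sum a_d/d^{1/2}$ converges; the hard object becomes $\tau(n)$ in an arithmetic progression to modulus $m\le x^{2/3-\eps}$. That partial sum $D_{m,a}(x)$ is expressed exactly through Kloosterman sums via the identity $\int_1^x D_{m,a}(t)\,dt/t = m^{-2}\sum_{c,d}S(ac,d,m)\int_1^x S_t(c/m)S_{x/t}(d/m)\,dt/t$, and Weil's bound yields the asymptotic $D_{m,a}(x)=\tfrac{\vp(m)}{m^2}x(\log x + c + \cdots) + O_\eps(x^{1/2+\eps}/m^{1/4})$. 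It is this $m^{-1/4}$ saving in the error term that makes the aggregate error $E_2\ll x^{1-\eps/2}$ acceptable. Your proposal has no substitute for this step, so as written the error estimation for $S_2$ would fail. If you want to keep a Maynard-style presentation, you should at least replace the $1*g$ decomposition with the $\tau$-based one and import a Kloosterman/Weil estimate for $\tau$ in progressions; otherwise the level of distribution available to you (via Bombieri--Vinogradov) is only $x^{1/2-\eps}$, which, through \eqref{eqsupport}, would force $z\le x^{1/4-\eps}$ rather than $x^{1/3-\eps}$ and would not produce a $\lb$ near $12.59$.

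One smaller remark: after diagonalising $S_1$ by one change of variables (the paper's $B_{r_1,r_2}$), $S_2$ does \emph{not} become diagonal in the same variables; it is a different quadratic form built from a second transform ($C_{r_1,r_2}$, involving the function $g(p)=p(p-1)/(p-2)$ rather than $\vp$), and the two are only tied back together through the common smooth profile $P$ and the double integrals $Q_1,Q_2$ in \eqref{Q1Q2}. Your phrase ``$S_2$ acquires a kernel'' is consistent with this, but you should be aware that you will need two distinct families of auxiliary quantities, not one, before the generalised eigenvalue problem can be set up.
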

\vskip 0.1in
The paper consists of three major sections. In the first section, we introduce the Perron's formula and use it to provide asymptotic estimates for partial sums of a certain class of arithmetic functions. \vskip 0.07in

In the second section, we deal with partial divisor sums like $\tau(n)$ and $2^{\Og(n)}$ in arithmetic progressions. This is because the quantity $S_2$ in (\ref{msum}) turns into a combination of sums of the function $2^{\Og(n)}$ in arithmetic progressions. 
We write expressions for such divisor sums in arithmetic progressions in terms of suitable Kloostermann and Exponential sums. These results are essentially due to Selberg in his unpublished manuscript \cite[page 241]{AS}. 
The Weil's bound for exponential sums allows us to obtain asymptotic formulae for the sum
$$ \sum\lm_{\st{n \leq x\\n \cg a \md{m}}}2^{\Og(n)} $$
with a good error term. \vskip 0.07in

In the final section, we obtain asymptotic expressions for $S_1$ and $S_2$ (See \ref{S1} and \ref{S2}) to compute a suitable value for $\lb$. 
In the computations, Corollary \ref{sumS} from Section 1 is frequently invoked as it provides asymptotic formula for just the sums we encounter. 
Moreover, these calculations and estimates follow many ideas from Maynard's preprint \cite{JM}. 
In order to optimise the choice of sieve weights, we rely on a sage program. (See \ref{Pchoice}).
\vskip 0.75in
\section*{Notation}
\thispagestyle{empty}
Throughout this paper, $h$ remains a fixed even number. The notation $n \sim x$ means that $n<x \leq 2x$. The symbol $p$ is reserved for a prime number. The numbers $\eps$, $x$ and $z$ will always be positive real numbers with $z \leq x$. Many times, we shall assume that $x$ is sufficiently large and that $\eps$ is sufficiently small. We write $(a,b)$ and $[a,b]$ for the GCD and LCM of positive integers $a$, $b$ respectively. In many places, particularly Section 3, we write $f(a,b)$(or $f[a,b]$) to denote $f((a,b))$(or $f([a,b])$ ) to simplify notation. For arithmetic functions $f$ and $g$, $f*g$ denotes the dirichlet convolution of $f$ and $g$. The symbols $'O'$, $'\ll'$ denote the usual big oh notation and $'o'$ denotes the little oh notation. In many places, particularly Section 3, the $O$-constants depend on $\eps$ or the functions $P$, $Q_1$ or $Q_2$, which will all be bounded functions. For the sake of simplicity, we do not specify the dependence of these $O$-constants.\\ \vskip 0.03in
The following table has a list of arithmetic functions. All of these functions barring $\Og(n)$ are multiplicative. In some cases, we shall define tthe function on primes as we are only concerned with their value at squarefree integers. The functions $f$, $f_1$, $g_1$, $h$, $h_1$ and $h_2$ appear only in Section 3. \vskip 0.4in
\tabcolsep 1cm
\thispagestyle{empty}
\begin{longtable}{l l}
{\bf \normalsize Function} & {\bf \normalsize Description} \\[3ex]
$\mu$ & The Moebius function \\[1ex]
$id$ & Defined by $id(n)=n$ \\[1ex]
$\vp$ & Euler Totient function\\[1ex]
$\tau$ & The Divisor function \\[1ex]
$\Og$ & The additive function given by $\Og(p^{k})=k$ \\[1ex]
$g$ & Defined by $g(p)=\fr{p(p-1)}{p-2}$\\[1ex]
$f$ & Defined by $f(p)=p/2$\\[1ex]
$f_1,g_1$ & Satisfy $f=f_1*1 \ $ and $\ g=g_1*1$\\[1ex]
$h$ & $h(p)=1-3/(p+2)$ \\[1ex]
$h_1,h_2$ & $h_1(p)=1-3/p+2/p^2 \ $  and  $ \ h_2(p)=1-2/p+2/p^2$\\[1ex]
\end{longtable}

\pagestyle{fancy}
\newpage
\section{Preliminary Results}
In this section, we review some of the well known results for partial sums of arithmetic functions. 
We define a class of arithmetic functions and give asymptotic estimates for their partial sums. We state a well known result

\begin{lem}
\label{logp}
\begin{equation*}
\sum\lm_{p \leq x} \fr{\log{p}}{p}=\log{x}+O(1)
\end{equation*}
\end{lem}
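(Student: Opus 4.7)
The plan is to compare two different evaluations of $\log(\lfloor x \rfloor!)$, which is the standard route to Mertens' first theorem. On the one hand, by elementary comparison with $\int_1^x \log t \, dt$ (or directly via Stirling's formula), one has
\[ \sum_{n \leq x} \log n = x \log x - x + O(\log x). \]

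On the other hand, Legendre's formula for the prime factorisation of $\lfloor x \rfloor!$ gives
\[ \log(\lfloor x \rfloor!) \;=\; \sum_{p \leq x} \log p \sum_{k \geq 1} \left\lfloor \frac{x}{p^k} \right\rfloor. \]
I would isolate the $k=1$ contribution from the tail. The terms with $k \geq 2$ are bounded above by $x \sum_{p} \frac{\log p}{p(p-1)}$, which is $O(x)$ since the series converges absolutely (being dominated by $\sum_n \log n / n^2$). Writing $\lfloor x/p \rfloor = x/p - \{x/p\}$, the $k = 1$ contribution equals
\[ x \sum_{p \leq x} \frac{\log p}{p} \;-\; \sum_{p \leq x} \left\{ \frac{x}{p} \right\} \log p, \]
and the fractional-part error is bounded in absolute value by $\theta(x) := \sum_{p \leq x} \log p$.

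The essential input at this stage is Chebyshev's upper bound $\theta(x) = O(x)$, which, if needed for self-containment, I would derive from the observation that every prime $p$ with $n < p \leq 2n$ divides $\binom{2n}{n} \leq 4^n$, whence $\theta(2n) - \theta(n) \leq 2n \log 2$; a dyadic summation then yields $\theta(x) \ll x$. Combining the two evaluations of $\log(\lfloor x \rfloor!)$ yields
\[ x \log x + O(x) \;=\; x \sum_{p \leq x} \frac{\log p}{p} + O(x), \]
and dividing through by $x$ establishes the lemma. The only step requiring any real work is the Chebyshev bound; the remaining manipulations are routine estimates on floor functions and absolutely convergent series.
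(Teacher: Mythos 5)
Your proof is correct and is the standard argument (Legendre's factorial formula combined with Stirling's estimate and Chebyshev's bound $\theta(x) = O(x)$). The paper simply cites Montgomery--Vaughan for this lemma, and the proof given there is essentially this same computation, carried out with $\Lambda(n)$ in place of a direct sum over primes; the two presentations are equivalent once one observes that prime powers with exponent $\geq 2$ contribute $O(1)$ to $\sum_{n \leq x} \Lambda(n)/n$.
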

\begin{proof}
 For a proof, see \cite[Pg 50]{MV}
\end{proof}

\medskip
This leads to an immediate Corollary.
\begin{cor}
\label{logp/logz}
Let $P$ be a continuously diferentiable function on $[0,\rho]$ and let $x^{1/\rho} \leq z \leq x$. Then we have
$$ \sum\lm_{p \leq x}\fr{\log{p}}{p}P\left( \fr{\log{p}}{\log{z}} \right)=\left( \log{z}+O(1) \right)
\int\lm_0^{\fr{\log{x}}{\log{z}}} P(t) \, dt $$
\end{cor}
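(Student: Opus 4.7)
The plan is to deduce this by partial (Abel) summation, using Lemma \ref{logp} as the only analytic input. Set $S(y) := \sum_{p \leq y} (\log p)/p$, so by Lemma \ref{logp} we may write $S(y) = \log y + E(y)$ with $E(y) = O(1)$. Expressing the target sum as a Riemann--Stieltjes integral,
\begin{equation*}
\sum_{p \leq x} \frac{\log p}{p}\, P\!\left( \frac{\log p}{\log z} \right) = \int_{2^{-}}^{x} P\!\left( \frac{\log y}{\log z} \right) dS(y),
\end{equation*}
and using $dS(y) = dy/y + dE(y)$, I would split the right-hand side into a main piece $I_1$ (with $dS$ replaced by $dy/y$) and an error piece $I_2$ (with $dS$ replaced by $dE$).

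For $I_1$, the substitution $t = (\log y)/\log z$ converts the integral to
\begin{equation*}
I_1 = \log z \int_{\log 2/\log z}^{\log x/\log z} P(t)\, dt = \log z \int_0^{\log x/\log z} P(t)\, dt + O(1),
\end{equation*}
the $O(1)$ absorbing the contribution of the short sub-interval $[0, (\log 2)/\log z]$, which has length $O(1/\log z)$ and on which $|P|$ is bounded. For $I_2$, an integration by parts gives a boundary term $[P((\log y)/\log z)\, E(y)]_{2^{-}}^x = O(1)$, plus
\begin{equation*}
-\int_2^x E(y) \cdot \frac{P'((\log y)/\log z)}{y \log z}\, dy \ll \frac{1}{\log z} \int_2^x \frac{dy}{y} \ll \frac{\log x}{\log z} \leq \rho,
\end{equation*}
using $|E| = O(1)$ and the boundedness of $P'$ on $[0,\rho]$. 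Here the hypothesis $z \geq x^{1/\rho}$ is invoked precisely to ensure $(\log x)/\log z \leq \rho$, both so that the upper integration limit for $P$ stays inside its domain and so that this final bound is $O(1)$. Therefore $I_2 = O(1)$.

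Combining, I obtain
\begin{equation*}
\sum_{p \leq x} \frac{\log p}{p}\, P\!\left( \frac{\log p}{\log z} \right) = \log z \int_0^{\log x/\log z} P(t)\, dt + O(1).
\end{equation*}
Since $P$ is continuous on the compact interval $[0,\rho]$, the factor $\int_0^{\log x/\log z} P(t)\, dt$ is $O(1)$, so the additive error $O(1)$ can be repackaged as $O(1) \cdot \int_0^{\log x/\log z} P(t)\, dt$, yielding the desired factored form $(\log z + O(1))\int_0^{\log x/\log z} P(t)\, dt$. The only real obstacle is the bookkeeping: verifying that each of the three small contributions (the truncation at $y = 2$, the integration-by-parts remainder involving $P'$, and the boundary terms) is genuinely $O(1)$ uniformly in $x$ and $z$ in the stated range, with the implicit constants allowed to depend only on $\rho$ and the fixed function $P$.
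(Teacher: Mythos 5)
Your proof follows essentially the same route as the paper's: both invoke Lemma \ref{logp} and then run a partial-summation (Riemann--Stieltjes) argument, extracting the main term $\log z \int_0^{\log x/\log z} P(t)\,dt$ and controlling the remainder by integration by parts against the $O(1)$ error function $E$. Your bookkeeping of the boundary terms and the integral involving $P'$ is more explicit than the paper's but lands in the same place, namely the additive form $\log z \int_0^{\log x/\log z} P(t)\,dt + O(1)$, and this is exactly what the paper establishes and later uses. The only soft spot is your final ``repackaging'' of the additive $O(1)$ into the multiplicative factor $(\log z + O(1))\int_0^{\log x/\log z} P(t)\,dt$: that step breaks down if the integral can be zero or arbitrarily small, so it does not literally follow. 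But this is an imprecision already present in the corollary's own statement (the paper's proof also stops at the additive form without comment), and in the one place the corollary is applied (the evaluation of $M_{21}^{(2)}$) it is immediately re-expanded into the additive form, so nothing downstream is affected.
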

\begin{proof}
We apply partial summation to the function $\fr{\log{n}}{n}1_{\mb{P}}(n) P \left( \fr{\log{n}}{\log{z}} \right)$. Since we know from Lemma \ref{logp} that 
$\sum\lm_{p \leq x}\fr{\log{p}}{p}=\log{x}+O(1)$, we get
\begin{equation*}
\begin{split}
\sum\lm_{p \leq x}\fr{\log{p}}{p} P\left( \fr{\log{p}}{\log{z}} \right)
&=\int\lm_1^x P\left(\fr{\log{t}}{\log{z}}\right)d(\log{t}+O(1))\\
&=\int\lm_1^x P\left( \fr{\log{t}}{\log{z}}\right)  \, \fr{dt}{t}  + O(1) + 
O\left( \int_1^x P'\left( \fr{\log{t}}{\log{z}} \right) \, d\left(\fr{\log{t}}{\log{z}}\right) \right)
\end{split}
\end{equation*}
Note that the main term above is clearly $ \ds (\log{z}) \int_0^{\fr{\log{x}}{\log{z}}} P(t) \, dt $
and the error term is $O(1)$. This completes the proof.
\end{proof}
\medskip
We now state the following Lemma.
\begin{lem}
\label{contour}
Let $c>0$ and $x>0$ be a real number which is not an integer. Then for any $T>0$, we have
$$ \left| \fr{1}{2 \pi i}\int_{c-iT}^{c+iT} \fr{x^s}{s} \, ds - \delta(x) \right| \leq x^c \min{\left\{1,\fr{1}{T |\log{x}|} \right\}}$$ 
where $$\delta(x)=\begin{cases} 0 & \text{if} \ 0<x<1 \\ 1 & \text{if} \ x>1 \end{cases}$$
\end{lem}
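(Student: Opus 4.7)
The plan is to split into the cases $x > 1$ and $0 < x < 1$ and, in each, establish two independent estimates for
\[
E(x,T) \; := \; \fr{1}{2\pi i}\int_{c-iT}^{c+iT} \fr{x^s}{s}\,ds \;-\; \dl(x),
\]
namely $|E(x,T)| \leq x^c$ and $|E(x,T)| \leq x^c/(T|\log x|)$. The bound $x^c \min\{1,\, 1/(T|\log x|)\}$ claimed in the lemma is then just the minimum of these two.

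To derive the bound $x^c/(T|\log x|)$, I would close the vertical segment into a long rectangle. When $x > 1$ I would close to the \emph{left}, using the rectangle with vertices $c \pm iT$ and $-U \pm iT$ and letting $U \to \infty$. The residue theorem contributes $\mathrm{Res}_{s=0}(x^s/s) = 1 = \dl(x)$, which matches and cancels the $\dl(x)$ term. The left vertical side is negligible as $U \to \infty$ because $|x^s| = x^{-U} \to 0$, while each horizontal side is bounded by $\fr{1}{T}\int_{-\infty}^c x^{\sg}\,d\sg = \fr{x^c}{T\log x}$, since $|s| \geq T$ on those sides. When $0 < x < 1$ the argument is symmetric: close to the \emph{right}, so that no pole is enclosed (matching $\dl(x)=0$), observe that the right vertical side vanishes as $U \to \infty$ because $x^U \to 0$, and bound the horizontal sides in the same way by $x^c/(T|\log x|)$.

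For the unconditional bound $|E(x,T)| \leq x^c$, I would replace the rectangle by a circular arc of radius $R = \sr{c^2+T^2}$ centred at the origin and joining $c-iT$ to $c+iT$: the arc to the left when $x > 1$ (so the residue at $s=0$ again cancels $\dl(x)$), and the arc to the right when $0 < x < 1$. On the arc, $|1/s| = 1/R$, and the key observation is that $x^{\mathrm{Re}(s)} \leq x^c$ throughout — when $x > 1$ this holds because $\mathrm{Re}(s) \leq c$ on the left arc, and when $x < 1$ it holds because $\mathrm{Re}(s) \geq c$ on the right arc. Since the arc length is at most $2\pi R$, the arc integral is bounded in absolute value by $x^c$, giving $|E(x,T)| \leq x^c$.

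The argument contains no genuine difficulty; the only point requiring real care is choosing the direction of closure so that both the residue at $s=0$ matches $\dl(x)$ \emph{and} $x^{\mathrm{Re}(s)}$ is maximised at the endpoints of the arc (so that the crude bound $x^{\mathrm{Re}(s)} \leq x^c$ is valid). Once both inequalities are in place, taking the minimum in each case yields the lemma.
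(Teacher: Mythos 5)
Your proof is correct and is essentially the standard contour-shift argument that the paper references (Iwaniec--Kowalski, Proposition 5.54): for each of the two bounds you close the segment with a rectangle (respectively a circular arc) on the side where $x^{\mathrm{Re}(s)}$ stays $\leq x^c$ and where the enclosed residue at $s=0$ matches $\dl(x)$. The only bookkeeping worth noting is that you have two horizontal sides each contributing $\leq \fr{1}{2\pi}\cdot\fr{x^c}{T|\log x|}$, giving a total of $\fr{x^c}{\pi T|\log x|}$; the factor $\fr{1}{2\pi}$ you omitted is what absorbs the sum over the two sides, and similarly it absorbs the arc length $\leq 2\pi R$ in the circular case, so the stated bound indeed follows.
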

\begin{proof}
For a proof, see \cite[Proposition 5.54, Pg 151]{IK} 
\end{proof}
This leads to an immediate Corollary, namely the Perron's formula.
\begin{prop}[Perron's formula]
 Let $F(s)$ be the Dirichlet series of $f$ which converges absolutely for $\sg>\sg_a \geq 0$. Let $c>\sg_a$and let $x>0$ be a non-integer. Then
 \begin{equation*}
 \sum\lm_{n \leq x} f(n)=\fr{1}{2 \pi i} \int_{c-iT}^{c+iT} F(s)\fr{x^s}{s} \, ds + O\left(x^c\sum\lm_{n=1}^{\infty}
\fr{|f(n)|}{n^c} \min \left\{ 1,\fr{1}{T|\log{\fr{x}{n}}|} \right\} \right) 
 \end{equation*}
\end{prop}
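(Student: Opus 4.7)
The plan is to derive Perron's formula as a direct consequence of Lemma \ref{contour} by substituting the Dirichlet series for $F(s)$ into the contour integral and applying the lemma termwise.

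First I would write
\begin{equation*}
\frac{1}{2\pi i} \int_{c-iT}^{c+iT} F(s)\frac{x^s}{s}\, ds
= \frac{1}{2\pi i} \int_{c-iT}^{c+iT} \left(\sum_{n=1}^{\infty} \frac{f(n)}{n^s}\right) \frac{x^s}{s}\, ds.
\end{equation*}
Since $c > \sigma_a$, the Dirichlet series converges absolutely on the vertical line $\Re(s) = c$, i.e.\ $\sum_n |f(n)|/n^c < \infty$. The factor $x^s/s$ is bounded on the finite segment $[c-iT, c+iT]$ by a constant depending only on $c, T, x$. Hence Fubini's theorem allows me to swap the sum and the integral, giving
\begin{equation*}
\frac{1}{2\pi i} \int_{c-iT}^{c+iT} F(s)\frac{x^s}{s}\, ds
= \sum_{n=1}^{\infty} f(n) \cdot \frac{1}{2\pi i} \int_{c-iT}^{c+iT} \frac{(x/n)^s}{s}\, ds.
\end{equation*}

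Next, I would apply Lemma \ref{contour} to the inner integral with $x$ replaced by $x/n$. Since $x$ is not an integer, $x/n \neq 1$ for any $n$, so the lemma is applicable for every $n$. It gives
\begin{equation*}
\frac{1}{2\pi i} \int_{c-iT}^{c+iT} \frac{(x/n)^s}{s}\, ds
= \delta(x/n) + O\!\left( (x/n)^c \min\!\left\{1, \frac{1}{T|\log(x/n)|}\right\}\right),
\end{equation*}
where $\delta(x/n) = 1$ if $n < x$ and $\delta(x/n) = 0$ if $n > x$. Summing against $f(n)$, the main term contributes exactly $\sum_{n \leq x} f(n)$ (with no boundary issue, again because $x$ is not an integer), and the error terms combine into
\begin{equation*}
O\!\left( x^c \sum_{n=1}^{\infty} \frac{|f(n)|}{n^c} \min\!\left\{1, \frac{1}{T|\log(x/n)|}\right\}\right),
\end{equation*}
which is the advertised bound. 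Rearranging yields the formula.

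The only step requiring any genuine care is the interchange of sum and integral; this is easily justified by the absolute convergence on $\Re(s) = c$ together with the compactness of the contour. The rest is a termwise application of Lemma \ref{contour}, so I do not expect any serious obstacle.
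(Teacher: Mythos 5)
Your proposal is correct and follows essentially the same route as the paper: substitute the Dirichlet series, interchange sum and integral using absolute convergence on the compact contour, and apply Lemma \ref{contour} termwise with $x$ replaced by $x/n$. The paper phrases the interchange via uniform convergence rather than Fubini, but the substance is identical.
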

\begin{proof}
We have
\begin{equation*}
\fr{1}{2 \pi i}\int_{c-iT}^{c+iT} F(s)\fr{x^s}{s} \, ds=\fr{1}{2 \pi i}\int_{c-iT}^{c+iT} \left( \sum\lm_{n=1}^{\infty} \fr{f(n)}{n^s}\right) \fr{x^s}{s} \, ds
\end{equation*}
Since the contour formed by the line joining $c-iT$ and $c+iT$ is compact, it follows that $F(s)$ converges uniformly here. We can therefore swap the order of integration and 
summation to obtain
\begin{equation*}
\fr{1}{2 \pi i}\int_{c-iT}^{c+iT} F(s) \fr{x^s}{s} \, ds=\fr{1}{2 \pi i}\sum\lm_{n=1}^{\infty} f(n) \int_{c-iT}^{c+iT} \fr{1}{s}\left(\fr{x}{n}\right)^s \, ds 
\end{equation*}
By Lemma \ref{contour}, we have
\begin{equation*}
\fr{1}{2 \pi i}\int_{c-iT}^{c+iT}\fr{1}{s}\left(\fr{x}{n}\right)^s \, ds=\begin{cases} 
1+O\left( \fr{x^c}{n^c} \min\left\{ 1, \fr{1}{T|\log{\fr{x}{n}}|} \right\} \right) & n \leq x \\ 
O\left( \fr{x^c}{n^c} \min\left\{ 1, \fr{1}{T|\log{\fr{x}{n}}|} \right\} \right) & n>x \end{cases}
\end{equation*}
From the above two relations, the result follows.
\end{proof}
\medskip

We define a class of multiplicative arithmetic functions and prove asymptotic formula for partial sums of the same. 
These are the type of functions we shall be encountering in the main computations of Section 3. 
\begin{defn}
\label{omegak}
For an integer $k \geq 1$, we define $\Og_k$ to be the set of all multiplicative arithmetic functions $f$ which are supported on the squarefree integers and the Dirichlet series $F(s)$ of $f$ is of the form 
$F(s)=\zeta^k(1+s)G(s)$, where $G(s)$ is given by an absolutely convergent series in $\sg \geq -\dl$, for some $\dl>0$.
\end{defn}

\medskip
\begin{prop}
\label{residue}
Let $f \in \Og_k$ be an arithmetic function with Dirichlet series $F(s)=\zeta^k(s+1)G(s)$. Then for any non-integer $x > 1$,
\begin{equation*}
\sum\lm_{n \leq x}f(n)=G(0)\fr{(\log{x})^k}{k!}+O\left( \log^{k-1} x \right)
\end{equation*}
Note that here $G(s)=\prod\lm_{p}\left( 1-\fr{1}{p^{1+s}} \right)^k \left( 1+\fr{f(p)}{p^s} \right)$.
\end{prop}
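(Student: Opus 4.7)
The strategy is to apply Perron's formula and then shift the contour past the pole at $s=0$ of order $k+1$ coming from $\zeta^k(1+s)/s$. First I would take $c=1/\log x$ and a truncation height $T$ (say $T=x^2$), and apply the Perron formula just proved to write
\begin{equation*}
\sum_{n \leq x} f(n) = \frac{1}{2\pi i}\int_{c-iT}^{c+iT} F(s)\fr{x^s}{s}\, ds + E,
\end{equation*}
where $E$ is the Perron error. Since $G(s)$ is given by an absolutely convergent series for $\sg \geq -\dl$, the values $|f(p)|$ are uniformly bounded; as $f$ is supported on squarefrees, this yields $|f(n)| \ll_\eps n^\eps$ for every $\eps > 0$, making $E$ negligible for $T$ a sufficiently large fixed power of $x$.

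Next I would shift the contour to the vertical line $\sg = -\dl'$ for some fixed $0 < \dl' < \dl$. On the rectangle picked up, the only singularity of the integrand is at $s = 0$, since $G(s) x^s$ is analytic in the strip $-\dl \leq \sg \leq c$ and $\zeta(1+s)$ is analytic away from $s=0$. Using the Laurent expansion $s\,\zeta(1+s) = 1 + \gm s + O(s^2)$ together with $G(s) = G(0) + O(s)$ and $x^s = \sum_{j \geq 0} (s\log x)^j/j!$, the residue at $s = 0$ of $F(s)x^s/s$ equals the coefficient of $s^k$ in the Taylor expansion of $(s\,\zeta(1+s))^k G(s) x^s$, which is
\begin{equation*}
G(0)\fr{(\log x)^k}{k!} + Q(\log x),
\end{equation*}
where $Q$ is a polynomial of degree at most $k - 1$. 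This gives the desired main term together with an $O(\log^{k-1} x)$ remainder.

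It remains to bound the shifted vertical integral and the two horizontal connecting segments at heights $\pm T$. On the line $\sg = -\dl'$ we have $\mathrm{Re}(1+s) = 1 - \dl' \in (0,1)$, where $\zeta$ is analytic and admits classical polynomial growth bounds in $|\mathrm{Im}(s)|$; combined with the absolute convergence of $G$ in this strip, the vertical integral is bounded by $x^{-\dl'}$ times a power of $\log T$, which is $o(1)$. The horizontal segments admit similar polynomial estimates and, by our choice of $T$, contribute $o(1)$ as well.

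The main obstacle is the simultaneous bookkeeping of the Perron truncation error, the residue computation, and the estimates on $\zeta^k(1+s)$ along the shifted vertical and horizontal segments; all of these must be absorbed into the single remainder $O(\log^{k-1} x)$. Fortunately, since the target error is as weak as $\log^{k-1} x$ and we gain a full power of $x$ upon crossing the line $\sg = 0$, only classical polynomial bounds for $\zeta$ off the line $\sg = 1$ are needed, with no delicate input.
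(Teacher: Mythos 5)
Your approach — Perron's formula followed by a contour shift past the order-$(k+1)$ pole of $\zeta^k(1+s)/s$ at $s=0$ — is a genuinely different route from the paper's. The paper instead quotes the classical estimate $\sum_{n\leq x}\tau_k(n) = xP_k(\log x) + O(x^{\theta_k+\eps})$ from Ivic, deduces $\sum_{n\leq x}\tau_k(n)/n = (1+O(1/\log x))(\log x)^k/k!$ by partial summation, and then finishes by the elementary convolution method using $f = (\tau_k/\mathrm{id})*g$ and the absolute convergence of $\sum g(n)n^{\dl}$. All $\zeta$-growth questions are thereby delegated to the cited divisor-problem estimate. Your residue computation at $s=0$ is correct and does produce the claimed main term $G(0)(\log x)^k/k!$ plus a degree-$(k-1)$ polynomial in $\log x$.

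However, your bound on the shifted vertical integral has a real gap. You assert that on $\sg = -\dl'$ the integral is ``bounded by $x^{-\dl'}$ times a power of $\log T$''. This is not what the classical bounds give: for a \emph{fixed} $\dl'>0$, the convexity estimate yields $|\zeta(1-\dl'+it)| \ll |t|^{\dl'/2+\eps}$, which is a power of $|t|$, not of $\log|t|$. Hence $|\zeta^k(1-\dl'+it)| \ll |t|^{k\dl'/2+\eps}$, and the vertical segment contributes
\begin{equation*}
\ll x^{-\dl'}\int_{1}^{T}\frac{|t|^{k\dl'/2+\eps}}{t}\,dt \ \ll\ x^{-\dl'}\,T^{k\dl'/2+\eps}.
\end{equation*}
With your choice $T=x^2$ this is $\ll x^{(k-1)\dl'+2\eps}$, which is not even bounded once $k\geq 2$; the horizontal segments run into the same obstruction, contributing $\ll T^{k\dl'/2 - 1 + \eps}$. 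To make the contour method close, you must couple the parameters: for instance $T = x^{\bt}$ with $\bt < 2/k$ and $\dl' < \min\{\dl,\,2/k\}$ does bring the shifted contributions down to acceptable size, while the Perron truncation error stays $O(\log^{k-1}x)$ provided you also use that $|f(n)|$ is bounded (a consequence of $|f(p)|\to 0$, which follows from the absolute convergence of $G$ at $\sg=-\dl$) and that $\sum_{x/2<n<2x}|f(n)| \ll \log^{k-1}x$. None of this is fatal, but the proposal as written silently passes over exactly the point where the complex-analytic route requires care, and this is precisely what the paper's elementary reduction to the $\tau_k$ divisor estimate is designed to avoid.
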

\begin{proof}
From (13.10) of \cite[Pg 353]{IV}, we have for any $k \geq 1$,
\begin{equation}
\label{divsum}
\sum\lm_{n \leq x} \tau_k(n) = x P_k(\log x) + O_{\eps}(x^{\theta_k+\eps})
\end{equation}
where $0<\theta_k<1$, $\tau_k=\underbrace{1*1* \dots *1}_{k \text{ times}}$ and $P_k(\log x)=Res_{s=1} \zeta^k(s) \fr{x^{s-1}}{s}$ is a polynomial of degree $k-1$ in $\log x$, 
with leading term $\fr{(\log x)^{k-1}}{(k-1)!}$. By partial summation, we obtain
\begin{equation}
\label{tauk}
\sum\lm_{n \leq x} \fr{\tau_k(n)}{n}=\left( 1 + O\left( \fr{1}{\log x} \right) \right)\fr{(\log x)^k}{k!}
\end{equation}
Since the Dirichlet series of $\tau_k/id$ is $\zeta^k(s+1)$, we have $f=\tau_k/ id * g$. Therefore, by the convolution method,
\begin{equation}
\begin{split}
\label{fsum}
\sum\lm_{n \leq x} f(n)&=\sum\lm_{ab \leq x} g(a) \fr{\tau_k(b)}{b}=\sum\lm_{a \leq x}g(a) \sum\lm_{b \leq x/a} \fr{\tau_k(b)}{b}
=\sum\lm_{a \leq x} g(a) \left( 1 + O\left( \fr{1}{\log x/a} \right) \right) \fr{(\log \fr{x}{a})^k}{k!}\\
&= \fr{(\log x)^k}{k!} \sum\lm_{a \leq x} g(a) \left( 1-\fr{\log a}{\log x} \right)^k+O\left( \log^{k-1} x \right)
\end{split} 
\end{equation}
Since $G(s)$ is well defined at $s=0$, it follows that $\sum\lm_{a \leq t}g(a)=G(0)+A^*(t)$, where $A^*(x)=\sum\lm_{n>x}g(n)$. Since $G(s)=\sum\lm_n \fr{g(n)}{n^s}$ converges absolutely 
for $\sg \geq -\dl$, it follows that $A^*(t) \ll t^{-\dl}$. Letting $Q(x)=(1-x)^k$, we have
\begin{equation*}
\begin{split}
\sum\lm_{n \leq x} g(n) Q\left( \fr{\log n}{\log x} \right)&=\int_1^x Q\left( \fr{\log t}{\log x} \right) d(G(0)-A^*(t))\\
&=A Q(0) +O(x^{-\dl})+ 
O\left(\int_1^x \fr{1}{t^{1+\dl} \log x} Q'\left( \fr{\log t}{\log x} \right) \, dt \right)\\
&=G(0) Q(0) + O(x^{-\dl})
\end{split}
\end{equation*}
Substituting the above expression into (\ref{fsum}), we obtain the desired result.
\end{proof}
\begin{rem}
Actually, the asymptotic formula for $\sum\lm_{n \leq x} f(n)$ in the previous Proposition is of the form $C Q(\log x) + 
O(x^{-\theta})$, where $C$ is the appropriate constant, $Q$ 
is a polynomial and $0<\theta <1$. Since the expression given in Proposition \ref{residue} suffices for our purposes, we avoid writing the main term as a polynomial and instead write with an error 
$O\left( \fr{1}{\log x} \right)$.
\end{rem}

\medskip
We now redefine the constant $G(0)$ occuring in the above Proposition.
\begin{defn}
Let $f \in \Og_k$ and let $\ov{f}=f*1$. Then for any positive integer $m$, we define
\begin{equation}\label{cmf} c(m,f)=\prod\lm_{p \nmid m} \left(1-\fr{1}{p} \right)^k \ov{f}(p)  \end{equation}
\end{defn}
\medskip
Proposition \ref{residue} leads to the following Corollary, which gives us the asymptotic formula for partial sums of functions in $\Og_k$.
\begin{prop}
\label{asympest}
Let $f \in \Og_k$ and let $\ov{f}=f*1$. Then for any positive integer $m$ and $(d,m)=1$, we have
\begin{equation*}
\sum\lm_{\st{n \leq z\\n \cg 0 \md{d} \\ (n,m)=1}} f(n) =  \fr{f(d)}{\ov{f}(d)}
\left(\fr{\vp(m) \log z}{m}\right)^k \ \fr{c(m,f)}{k!}\left( 1-\fr{\log d}{\log z} \right)^k + O(\log^{k-1}z)
\end{equation*}
\end{prop}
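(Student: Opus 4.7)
The plan is to reduce to Proposition \ref{residue} by a change of variables and a coprimality twist of $f$. First, if $d$ is not squarefree then $f(d) = 0$ and, since $f \in \Og_k$ vanishes off squarefrees, no $n$ with $d \mid n$ contributes to the sum; both sides are then $0$. So assume $d$ is squarefree. Writing $n = d n'$, the conditions $d \mid n$, $(n, m) = 1$, together with the squarefreeness forced by $f(n) \neq 0$, become $(n', md) = 1$, and multiplicativity of $f$ gives
\[
\sum_{\substack{n \leq z \\ d \mid n \\ (n,m)=1}} f(n) \ = \ f(d) \sum_{\substack{n' \leq z/d \\ (n', md) = 1}} f(n').
\]

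Next I would introduce the twisted function $f^*(n) = f(n) \mathbf{1}_{(n, md) = 1}$, still multiplicative and supported on squarefrees, with Dirichlet series
\[
F^*(s) \ = \ \prod_{p \nmid md} \left(1 + \fr{f(p)}{p^s}\right) \ = \ \zeta^k(1+s) \, G^*(s),
\]
where $G^*(s) = G(s) \prod_{p \mid md} (1 - p^{-1-s})^k (1 + f(p)/p^s)^{-1}$ differs from $G(s)$ only by a finite Euler product. Thus $G^*(s)$ remains absolutely convergent in some half-plane $\sg \geq -\dl'$, so $f^* \in \Og_k$, and Proposition \ref{residue} applied at height $z/d$ gives
\[
\sum_{\substack{n' \leq z/d \\ (n', md) = 1}} f(n') \ = \ G^*(0) \, \fr{(\log(z/d))^k}{k!} + O(\log^{k-1} z).
\]

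It remains to identify $G^*(0)$. Evaluating the Euler product at $s = 0$ yields $G^*(0) = \prod_{p \mid md}(1 - 1/p)^k \cdot c(md, f)$. Using $(d, m) = 1$ and $d$ squarefree, the definition \eqref{cmf} of $c(\cdot, f)$ gives $c(m, f) = c(md, f) \prod_{p \mid d}(1 - 1/p)^k \ov{f}(p)$, while $\prod_{p \mid md}(1-1/p)^k = (\vp(m)/m)^k \prod_{p \mid d}(1 - 1/p)^k$. Combining these, the $(1-1/p)^k$ factors over $p \mid d$ cancel, and since $\ov{f}(d) = \prod_{p \mid d} \ov{f}(p)$ for squarefree $d$, one obtains $G^*(0) = (\vp(m)/m)^k c(m, f)/\ov{f}(d)$. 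Multiplying through by $f(d)$ and expanding $(\log(z/d))^k = (\log z)^k (1 - \log d/\log z)^k$ produces the stated main term. The main obstacle is precisely this constant-collapsing step — carefully bookkeeping which Euler factors have been removed, added, or inverted so that everything reassembles as $c(m, f)/\ov{f}(d)$ rather than $c(md, f)$; the analytic input beyond Proposition \ref{residue} is minimal, since removing finitely many Euler factors does not affect convergence of $G^*(s)$.
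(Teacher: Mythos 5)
Your proposal follows essentially the same route as the paper: pull out $f(d)$ via $n = dn'$, apply Proposition \ref{residue} to the coprimality-twisted function on $[1, z/d]$, and then collapse the Euler-factor bookkeeping to recover $\left(\vp(m)/m\right)^k c(m,f)/\ov{f}(d)$. The argument is correct, and the constant-identification step (which you rightly flag as the delicate part) is carried out in the same way as the paper.

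One small slip worth flagging: your displayed formula for $G^*(s)$ carries an extraneous factor. From $F^*(s) = \zeta^k(1+s)G^*(s)$ with $F^*(s) = \prod_{p\nmid md}\bigl(1 + f(p)/p^s\bigr)$ and $G(s) = \prod_p\bigl(1-p^{-1-s}\bigr)^k\bigl(1 + f(p)/p^s\bigr)$, one gets
\begin{equation*}
G^*(s) \;=\; G(s)\prod_{p\mid md}\Bigl(1 + \fr{f(p)}{p^s}\Bigr)^{-1},
\end{equation*}
without the $(1-p^{-1-s})^k$ factor over $p\mid md$ that you inserted (that Euler factor already belongs to the $\zeta^k(1+s)$ piece and must not be removed from $G^*$). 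As written, your $G^*(s)$ would give $G^*(0)$ an extra $\prod_{p\mid md}(1-1/p)^k$, contradicting the value $G^*(0) = \prod_{p\mid md}(1-1/p)^k\,c(md,f)$ that you then (correctly) assert and use. Since the downstream computation is based on the correct $G^*(0)$, the final identity and the proof go through; just fix the intermediate display. Everything else — the reduction to $d$ squarefree, the observation that finitely many removed Euler factors preserve $f^*\in\Og_k$, the cancellation of $\prod_{p\mid d}(1-1/p)^k$ against $c(md,f)$, and the bound $O(\log^{k-1}(z/d)) = O(\log^{k-1}z)$ for $d\le z$ — is sound and matches the paper.
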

\begin{proof}
Firstly, we note that
\begin{equation*}
\sum\lm_{\st{n \leq z\\ n \cg 0 \md{d} \\ (n,m)=1}} f(n)=f(d) \sum\lm_{\st{n \leq z/d \\ (n,dm)=1}} f(n) 
\end{equation*}
We now apply Proposition \ref{residue} to the function $f(n) 1_{(n,dm)=1}$ with $x=z/d$. Note that the Dirichlet series of this function is $\zeta^k(1+s) G_{dm}(s)$, 
where $G_{dm}(s)=G(s)\prod\lm_{p \mid dm}\left(1+\fr{f(p)}{p^s}\right)^{-1}$. We therefore obtain,
\begin{equation*}
 \sum\lm_{\st{n \leq z\\ n \cg 0 \md{d} \\ (n,m)=1}} f(n)=f(d)\fr{\log^k z/d}{k!} G_{dm}(0) + O\left( \log^{k-1} z/d \right)
\end{equation*}
Writing
$$ G_{dm}(0)=G(0)\prod\lm_{p \mid dm} (1+f(p))^{-1}=\fr{1}{\ov{f}(d)}\fr{\vp^k(m)}{m^k}\prod\lm_{p \nmid m} (1-1/p)^k (1+f(p)) $$
we obtain the desired result.
\end{proof}

\medskip
The next Theorem gives us the asymptotic formula for functions in $\Og_k$ accompanied by a smoothing function.
\begin{thm}
\label{estcong}
Let $f \in \Og_k$ and let $\ov{f}=f*1$. Suppose $0<w_1< w_2 \ll z^{\rho}$ for some $\rho>0$. Let $d \leq w_2$ and let $P$ be a continuously differentiable function on $[0,\rho]$. Then for any positive integer $m$,
\begin{equation*}
\begin{split}
\sum\lm_{\st{w_1<n \leq w_2 \\ n \cg 0 \md{d}\\ (n,m)=1}} f(n)P\left( \fr{\log{n}}{\log{z}} \right)
&=\left(\fr{\vp(m)\log z}{m}\right)^k \fr{f(d)}{\ov{f}(d)}  \fr{c(m,f)}{(k-1)!}
\int\lm_{\max\left\{\fr{\log{w_1}}{\log{z}},\fr{\log{d}}{\log{z}}\right\}}^{\fr{\log{w_2}}{\log{z}}} P(t) \left( t-\fr{\log{d}}{\log{z}} \right)^{k-1} \, dt \\
&\quad + O(\log^{k-1} z)
\end{split}
\end{equation*}
\end{thm}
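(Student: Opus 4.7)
The plan is to apply Abel (partial) summation, taking Proposition \ref{asympest} as the input. Define
$$ A(t)=\sum\lm_{\st{n \leq t\\ n \cg 0 \md{d}\\ (n,m)=1}} f(n), \qquad \phi(t)=P\!\left(\fr{\log t}{\log z}\right). $$
For $t \geq d$, Proposition \ref{asympest} gives $A(t)=M(t)+E(t)$ with
$$ M(t)=C(\log t-\log d)^{k}, \qquad C=\fr{f(d)}{\ov{f}(d)}\left(\fr{\vp(m)}{m}\right)^{k}\fr{c(m,f)}{k!}, $$
and $E(t) \ll \log^{k-1}t \ll \log^{k-1}z$ by the hypothesis $w_2 \ll z^{\rho}$. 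Extending $M(t)=0$ for $t<d$ keeps $M$ continuous at $t=d$, and on $[0,d)$ both $A$ and $E$ vanish.

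Partial summation gives
$$ S:=\sum\lm_{\st{w_1<n \leq w_2\\ n \cg 0 \md{d}\\ (n,m)=1}} f(n)\phi(n)=\phi(w_2)A(w_2)-\phi(w_1)A(w_1)-\int\lm_{w_1}^{w_2} A(t)\phi'(t)\,dt. $$
Split $A=M+E$. Reversing the integration by parts on the $M$-piece reassembles it as $\int\lm_{\max(w_1,d)}^{w_2}\phi(t)M'(t)\,dt$ with $M'(t)=kC(\log t-\log d)^{k-1}/t$. The change of variable $u=\log t/\log z$ turns this into
$$ kC(\log z)^{k}\int\lm_{\max\{\log w_1/\log z,\ \log d/\log z\}}^{\log w_2/\log z} P(u)\left(u-\fr{\log d}{\log z}\right)^{k-1}du, $$
and using $kC=\fr{f(d)}{\ov{f}(d)}(\vp(m)/m)^{k}c(m,f)/(k-1)!$ together with $(\log z)^{k}(\vp(m)/m)^{k}=(\vp(m)\log z/m)^{k}$ reproduces exactly the claimed main term.

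For the error, the boundary terms $\phi(w_i)E(w_i)$ are $O(\log^{k-1}z)$ since $P$ is bounded on $[0,\rho]$. The remaining integral is bounded by
$$ \int\lm_{w_1}^{w_2} |E(t)||\phi'(t)|\,dt \ll \log^{k-1}z \cdot \|P'\|_{\infty}\cdot \fr{1}{\log z}\int\lm_{w_1}^{w_2}\fr{dt}{t} \ll \log^{k-1}z\cdot \fr{\log(w_2/w_1)}{\log z} \ll \log^{k-1}z, $$
the last step using $\log w_2 \ll \log z$. The only bookkeeping subtlety is the case split $w_1 \geq d$ versus $w_1<d$: in the former, partial summation on $[w_1,w_2]$ applies directly, while in the latter $A$ vanishes on $[0,d)$ and so the effective lower endpoint for the $M$-contribution is $d$. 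In both situations the stated $\max$ captures the correct lower limit. There is no substantive obstacle — the theorem is just Proposition \ref{asympest} smoothed against the weight $P$, and partial summation converts one into the other mechanically.
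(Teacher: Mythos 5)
Your proof is correct and follows essentially the same route as the paper's: both apply partial summation against the decomposition $A(t)=M(t)+E(t)$ furnished by Proposition \ref{asympest}, recover the main term from $\int \phi(t)M'(t)\,dt$ via the change of variable $u=\log t/\log z$, and bound the $E$-contribution by $O(\log^{k-1}z)$ using $w_2 \ll z^{\rho}$. The only cosmetic difference is that the paper writes the partial summation as a Riemann–Stieltjes integral $\int P(\log t/\log z)\,d(M(t)+E(t))$ while you work through the Abel form and undo the integration by parts explicitly; your remark on the case split $w_1<d$ versus $w_1\geq d$ (which the paper glosses over by simply writing $\max\{w_1,d\}$) is a harmless but welcome extra detail.
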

\begin{proof}
Let $f_{m,d}$ be defined as in the previous Proposition \ref{asympest}. The required sum to be estimated then is
$$ \sum\lm_{n \leq z} f_{m,d}(n) P\left( \fr{\log{n}}{\log{z}} \right) $$
We have seen in Proposition \ref{asympest} that for any $x>0$
$$ \sum\lm_{n \leq x}f_{m,d}(n)=M(x)+E(x)$$
where $M(x)$ is the main term and $E(x)$ is the error term. We also know that $E(x) \ll \log^{k-1}x$ and that $M(x)$ is a differentiable function on $\mb{R}^{>0}$. 
We apply partial summation to get
\begin{equation*}
\sum\lm_{w_1<n \leq w_2}f_{m,d}(n)P\left( \fr{\log{n}}{\log{z}} \right)=\int\lm_{w_1}^{w_2}P\left( \fr{\log{t}}{\log{z}}\right)d(M(t)+E(t)) \, dt
\end{equation*}
The main term above is
$$ \int\lm_{w_1}^{w_2} M'(t)P\left( \fr{\log{t}}{\log{z}} \right) \, dt $$
and the error term is
\begin{equation*}
\begin{split}
& O\left(|E(w_2)|+|E(w_1)| + \int\lm_{w_1}^{w_2} \fr{|E(t)|}{t \log{z}} \, dt \right)
\end{split}
\end{equation*}
Since $ E(t) \ll \log^{k-1} t$, it follows that the error term above is $O\left( \log^{k-1}z\right)$. Substituting the value 
of the main term $M(t)$ from Proposition \ref{asympest}, the main term above is
\begin{equation*}
\begin{split}
& \quad \fr{1}{k!}\fr{f(d)}{\ov{f}(d)}\left(\fr{\vp(m)\log{z}}{m}\right)^k c_m(f)\int\lm_{\max\{w_1,d\}}^{w_2} \left( \fr{\log{t}}{\log{z}} - \fr{\log{d}}{\log{z}} \right)^{k-1} P\left(\fr{\log{t}}{\log{z}} \right) \, \fr{k \ dt}{t \log{z}}
\end{split}
\end{equation*}
The change of variable $ \ds t \to \fr{\log{t}}{\log{z}}$ yields us the desired main term. 
\end{proof}

\begin{defn}
For any real number $s$, we define
\begin{equation}
\label{etadef}
\eta(s)=\min\left\{ 1-\fr{2s}{3},\fr{3(1-s)}{2} \right\}
\end{equation}
One easily sees that
\label{3/5}
\begin{equation*}
\eta(s)=\begin{dcases} 1-\fr{2s}{3} & \text{if} \ s \leq \fr{3}{5} \\ \fr{3(1-s)}{2} & \text{if} \ s \geq \fr{3}{5} \end{dcases}
\end{equation*}
\end{defn}
\medskip
We state the next Lemma without proof.
\medskip
\begin{lem}
\label{etacn}
Let $d_1$, $d_2$ be positive integers. Let $z \leq x$, $S(z)$ be as given in (\ref{support}) and $W$ be as given in (\ref{DW}). Then
\begin{equation*}
\begin{split}
(d_1,d_2) \in S(z) &\iff \mu^2(d_1d_2W)=1 \ \text{and} \ \max\{d_1d_2^{2/3},d_1^{2/3}d_2\} \leq z \\
&\iff \mu^2(d_1d_2W)=1 \ \text{and} \ d_1 \leq z^{\eta\left( \fr{\log{d_2}}{\log{z}} \right)}
\end{split}
\end{equation*}
\end{lem}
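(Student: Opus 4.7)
The first equivalence in the statement is nothing more than a restatement of the definition of $S(z)$ given in (\ref{support}), so the substantive content of the lemma lies entirely in the second equivalence. My plan is to reduce this to an elementary computation with logarithms.

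First I would pass to logarithms base $z$. Assuming $z>1$ (which holds since $x$ is large and we are always working in the regime $z \leq x$ with $z$ a fractional power of $x$), set
\[
s=\frac{\log d_2}{\log z}, \qquad t=\frac{\log d_1}{\log z}.
\]
The two inequalities comprising $\max\{d_1 d_2^{2/3},\,d_1^{2/3} d_2\}\leq z$ then read
\[
t+\tfrac{2s}{3}\leq 1 \quad\text{and}\quad \tfrac{2t}{3}+s\leq 1,
\]
equivalently $t\leq 1-2s/3$ and $t\leq 3(1-s)/2$. Their conjunction is precisely $t\leq \min\{1-2s/3,\,3(1-s)/2\}$, which by the definition of $\eta$ in (\ref{etadef}) is $t\leq \eta(s)$. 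Exponentiating back, this is exactly $d_1\leq z^{\eta(\log d_2/\log z)}$, giving the claimed equivalence.

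As a consistency check for the piecewise description of $\eta$ recorded after the definition, one verifies $1-2s/3\leq 3(1-s)/2 \iff 2-4s/3\leq 3-3s \iff 5s/3\leq 1 \iff s\leq 3/5$, matching the breakpoint at $s=3/5$. The squarefree condition $\mu^2(d_1d_2W)=1$ is inherited unchanged from (\ref{support}) on both sides, so no work is needed there. There is essentially no obstacle; the lemma is a direct transcription of the support conditions via logarithms, included only to fix notation for the computations in Section 3.
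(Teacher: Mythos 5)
Your proof is correct; the lemma is indeed just a restatement of the definition of $S(z)$ via logarithms, and your computation $t\leq 1-2s/3$ and $t\leq 3(1-s)/2$ together being equivalent to $t\leq\eta(s)$ is exactly right. The paper in fact states this lemma without proof, so there is no argument to compare against, but your reasoning fills the gap in the natural and only plausible way.
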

\begin{defn}
Let $s_1, s_2 \in [0,1]$. We define the region
\begin{equation}
\label{Ts1s2}
T_{s_1,s_2}=\left\{(x_1,x_2) \in \mb{R}^2: s_i \leq x_i, \ x_1+\fr{2x_2}{3} \leq 1, \ \fr{2x_1}{3}+x_2 \leq 1\right\} 
\end{equation}
We shall denote the region $T_{0,0}$ by $T$.
\end{defn}
\medskip
The next Corollary is the key result of this section. In our computations later on, we shall be invoking this lemma quite frequently.
\begin{cor}
\label{sumS}
Let $f_1 \in \Og_{k_1}$ and $f_2 \in \Og_{k_2}$ for positive integers $k_1$ and $k_2$. Let $f=f_1*f_2$ and let $P:T \to \mb{R}$ be a function differentiable in each variable. Let $d_1, d_2$ be positive integers
and let $s_i=\fr{\log{d_i}}{\log{z}}$, $B=\fr{\vp(W) \log{z}}{W}$. Then with $S(z)$ as defined in (\ref{support}), we have
\begin{equation*}
\begin{split}
\sum\lm_{\st{l_i \cg 0 \md{d_i}\\(l_1,l_2) \in S(z)}} f_1(l_1)f_2(l_2)
P\left( \fr{\log{l_1}}{\log{z}},\fr{\log{l_2}}{\log{z}} \right)&=B^{k_1+k_2} 
\fr{f_1(d_1)}{\ov{f}(d_1)}\fr{f_2(d_2)}{\ov{f}(d_2)}c(W,f) \\
& \quad \times \left( \iint\lm_{T_{s_1,s_2}} P(t_1,t_2)\fr{(t_1-s_1)^{k_1-1}}{(k_1-1)!}
\fr{(t_2-s_2)^{k_2-1}}{(k_2-1)!} \, dt_2 \, dt_1\right)\\
&\quad + O(\log^{k_1+k_2-1}z)
\end{split}
\end{equation*}
\end{cor}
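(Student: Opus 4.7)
The plan is to iterate Theorem \ref{estcong}, summing over $l_1$ with $l_2$ fixed and then over $l_2$. Fix $l_2$ squarefree with $(l_2,W)=1$ (this is forced by $\mu^2(l_1l_2W)=1$). By Lemma \ref{etacn} the support condition $(l_1,l_2)\in S(z)$ for this $l_2$ becomes $\mu^2(l_1)=1$, $(l_1,Wl_2)=1$, and $l_1\le z^{\eta_2}$ with $\eta_2:=\eta(\log l_2/\log z)$. Applying Theorem \ref{estcong} to $f_1\mu^2\in\Omega_{k_1}$ with $d=d_1$, $m=Wl_2$, $w_2=z^{\eta_2}$ and smoothing $t\mapsto P(t,\log l_2/\log z)$ gives a main term proportional to $B^{k_1}(\varphi(l_2)/l_2)^{k_1}$ (using $(l_2,W)=1$ to split $\varphi(Wl_2)/(Wl_2)$) with error $O(\log^{k_1-1}z)$ uniform in $l_2$.

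Next I would absorb all $l_2$-dependence of the inner main term into a single auxiliary function. Define, for $l_2$ squarefree and coprime to $W$,
\[
\tilde f_2(l_2):=f_2(l_2)\,\bigl(\varphi(l_2)/l_2\bigr)^{k_1}\,\frac{c(Wl_2,f_1)}{c(W,f_1)}.
\]
At a prime $p\nmid W$ this simplifies to $\tilde f_2(p)=f_2(p)/(1+f_1(p))$, and writing $f_i(p)=k_i/p+O(p^{-1-\delta})$ (which follows from $f_i\in\Omega_{k_i}$) one checks that $\tilde f_2(p)=k_2/p+O(p^{-1-\delta})$, so $\tilde f_2\in\Omega_{k_2}$. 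The outer sum then has the form
\[
\sum_{\substack{d_2\mid l_2,\,(l_2,W)=1\\ \mu^2(l_2)=1}}\tilde f_2(l_2)\,Q\!\left(\frac{\log l_2}{\log z}\right),\qquad Q(t_2):=\int_{s_1}^{\eta(t_2)}\!P(t_1,t_2)(t_1-s_1)^{k_1-1}\,dt_1,
\]
with the convention $Q(t_2)=0$ for $t_2>\eta(s_1)$.

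A second application of Theorem \ref{estcong}, now with $f=\tilde f_2$, $d=d_2$, $m=W$ and smoothing $Q$, produces $B^{k_2}$ together with $\tilde f_2(d_2)/\overline{\tilde f_2}(d_2)$ and $c(W,\tilde f_2)$. A prime-by-prime computation gives
\[
\frac{\tilde f_2(d_2)}{\overline{\tilde f_2}(d_2)}=\frac{f_2(d_2)}{\overline{f}(d_2)}\qquad\text{and}\qquad c(W,\tilde f_2)=\frac{c(W,f)}{c(W,f_1)},
\]
so the intermediate $c(W,f_1)$ cancels against the first step, the factorials combine as $1/((k_1-1)!(k_2-1)!)$, and Fubini rewrites the iterated integral as the double integral over $T_{s_1,s_2}$, recovering the claimed expression. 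The total error is bounded by $O(\log^{k_1-1}z)\sum_{l_2}|\tilde f_2(l_2)|+O(\log^{k_2-1}z)=O(\log^{k_1+k_2-1}z)$.

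The main technical hurdles are: (i) verifying rigorously that $\tilde f_2\in\Omega_{k_2}$, which requires showing that $(1-p^{-(1+s)})^{k_2}(1+\tilde f_2(p)/p^s)$ defines an absolutely convergent Euler product on a half-plane $\sigma\ge-\delta'$, deduced from the analogous properties of $f_1$ and $f_2$; and (ii) dealing with the fact that $Q$ is not globally $C^1$ (it has corners at $t_2=3/5$, from the kink of $\eta$, and at $t_2=\eta(s_1)$). Both are handled by splitting the outer sum at the corresponding values of $l_2$ and invoking Theorem \ref{estcong} on each smooth piece; continuity of $Q$ across the breakpoints ensures that the pieces reassemble into the single integral over $T_{s_1,s_2}$.
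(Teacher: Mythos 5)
Your approach is the same as the paper's (iterate Theorem \ref{estcong}, one variable at a time, absorbing the cross-dependence into a single auxiliary multiplicative function); you simply iterate in the opposite order, summing over $l_1$ first where the paper sums over $l_2$ first. The auxiliary function you call $\tilde f_2$ is the same object as the paper's $f_1/\overline{f_2}$ with the roles of $1$ and $2$ swapped, and your identities for $\tilde f_2(d_2)/\overline{\tilde f_2}(d_2)$ and $c(W,\tilde f_2)$ are correct, as is the verification that $\tilde f_2\in\Omega_{k_2}$.

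However, there is a genuine gap in your second application of Theorem \ref{estcong}. After summing over $l_1$ with $d_1\mid l_1$, the support condition $\mu^2(l_1 l_2 W)=1$ forces $(l_2,d_1)=1$, not merely $(l_2,W)=1$. Your outer sum omits this constraint, and correspondingly you invoke Theorem \ref{estcong} with $m=W$ rather than $m=d_1W$. This matters: with $m=W$ the $d_1$-dependence of the final answer is only what the first step produced, namely $f_1(d_1)/\overline{f_1}(d_1)$, which at a prime $p\mid d_1$ equals $f_1(p)/(1+f_1(p))$ — but the corollary requires $f_1(d_1)/\overline{f}(d_1)$, i.e. $f_1(p)/(1+f_1(p)+f_2(p))$. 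The missing factor comes precisely from the $m=d_1W$ versus $m=W$ correction. Taking $m=d_1W$ introduces the extra factors $\bigl(\varphi(d_1)/d_1\bigr)^{k_2}\cdot c(d_1W,\tilde f_2)/c(W,\tilde f_2)=\prod_{p\mid d_1}\overline{\tilde f_2}(p)^{-1}$, and since $\overline{\tilde f_2}(p)=\overline{f}(p)/\overline{f_1}(p)$, multiplying this against the first-step factor $f_1(d_1)/\overline{f_1}(d_1)$ yields exactly $f_1(d_1)/\overline{f}(d_1)$. This is the same mechanism the paper uses, where the outer sum over $l_1$ carries the explicit condition $(l_1,d_2W)=1$; the paper's displayed final identity has a typo in the $d_1$-factor, but the underlying computation relies on this $m=d_2W$ correction in the same way. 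You should restore the constraint $(l_2,d_1)=1$, use $m=d_1W$, and track the resulting $d_1$-factors before claiming to recover the stated expression.
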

\begin{proof}
First of all, let us rephrase the conditions $l_1 \cg 0(d_1)$, $l_2 \cg 0(d_2)$ and $(l_1,l_2) \in S(z)$. We know from Lemma \ref{etacn} that
$$ (l_1,l_2) \in S(z) \iff \mu^2(l_1l_2W)=1,  \ \text{and} \  
l_2 \leq z^{\eta\left( \fr{\log{l_1}}{\log{z}} \right)} $$
From now on, we let $ \ds t_i=\fr{\log{l_i}}{\log{z}}$, for $i=1,2$. We can now write the given summation as
$$\sum\lm_{\st{l_1 \cg 0(d_1)\\l_2 \cg 0(d_2)\\(l_1,l_2) \in S(z)}}=\sum\lm_{\st{l_1 \cg 0(d_1)\\(l_1,d_2) \in S(z)}} \sum\lm_{\st{l_2 \cg 0(d_2)\\(l_2,l_1) \in S(z)}}=\sum\lm_{\st{l_1 \cg 0(d_1)\\l_1 \leq z^{\eta(s_2)}\\(l_1,d_2W)=1}} 
\sum\lm_{\st{l_2 \cg 0(d_2)\\l_2 \leq z^{\eta(t_1)}\\(l_2,l_1W)=1}} $$
This given summation above is set in such a way that one can directly apply Theorem \ref{estcong} to the inner sum. We therefore have
\begin{equation}
\label{explain}
\begin{split}
& \quad \sum\lm_{\st{l_i \cg 0(d_i)\\(l_1,l_2) \in S(z)}}f_1(l_1)f_2(l_2)P\left( \fr{\log{l_1}}{\log{z}},\fr{\log{l_2}}{\log{z}} \right)=\sum\lm_{\st{l_1 \cg 0(d_1)\\l_1 \leq z^{\eta(s_2)}\\(l_1,d_2W)=1}}f_1(l_1)\sum\lm_{\st{l_2 \cg 0(d_2)\\l_2 \leq z^{\eta(t_1)}\\(l_2,l_1W)=1}}f_2(l_2)P\left( \fr{\log{l_1}}{\log{z}},\fr{\log{l_2}}{\log{z}} \right)\\
&=\fr{f_2(d_2)}{\ov{f_2}(d_2)}\sum\lm_{\st{l_1 \cg 0(d_1)\\l_1 \leq z^{\eta(s_2)}\\(l_1,d_2W)=1}}f_1(l_1) \left( \fr{\vp(Wl_1)\log{z}}{Wl_1} \right)^{k_2} c(Wl_1,f_2) 
\int\lm_{s_2}^{\eta\left(\fr{\log{l_1}}{\log{z}}\right)} P\left( \fr{\log{l_1}}{\log{z}},t_2 \right) \fr{(t_2-s_2)^{k_2-1}}{(k_2-1)!} \, dt_2 \\
&\quad+ O\left((\log z)^{k_2-1}  \sum\lm_{l_1 \leq z}f_1(l_1) \right)
\end{split}
\end{equation}
We can now write 
\begin{equation*}
\begin{split} 
c(Wl_1,f_2)&=\prod\lm_{p \nmid Wl_1}\left( 1-\fr{1}{p} \right)^{k_2} \ov{f_2}(p)=
 \left(\fr{l_1}{\vp(l_1)}\right)^k \fr{1}{\ov{f_2}(l_1)} \prod\lm_{p \nmid W}\left( 1-\fr{1}{p} \right)^{k_2} \ov{f_2}(p)\\
&=\left(\fr{l_1}{\vp(l_1)}\right)^k \fr{1}{\ov{f_2}(l_1)}c(W,f_2)
\end{split}
\end{equation*}
Therefore, substituting this expression for $c(Wl_1,f_2)$, the main term in (\ref{explain}) may be written as
\begin{equation*}
\begin{split}
&= \fr{f_2(d_2)}{\ov{f_2}(d_2)} \left( \fr{\vp(W) \log{z}}{W} \right)^{k_2} c(W,f_2)
\sum\lm_{\st{l_1 \cg 0(d_1)\\l_1 \leq z^{\eta(s_2)}\\(l_1,d_2W)=1}} \fr{f_1(l_1)}{\ov{f_2}(l_1)}\int\lm_{s_2}^{\eta\left(\fr{\log{l_1}}{\log{z}}\right)} P\left( \fr{\log{l_1}}{\log{z}},t_2 \right) \fr{(t_2-s_2)^{k_2-1}}{(k_2-1)!} \, dt_2\\
&= (f_1/\ov{f_1})(d_1)\fr{(f_2 / \ov{f_2})(d_2)}{\ov{(f_2/\ov{f_2})}(d_2)} 
\left( \fr{\vp(W)\log{z}}{W} \right)^{k_1+k_2} c(W,f_2) c\left(W,\fr{f_1}{\ov{f_2}} \right) \\
& \quad \times \int\lm_{s_1}^{\eta(s_2)} \int\lm_{s_2}^{\eta(t_1)} P(t_1,t_2) 
\fr{(t_1-s_1)^{k_1-1}}{(k_1-1)!} \fr{(t_2-s_2)^{k_2-1}}{(k_2-1)!} \, dt_2 \, dt_1
\end{split}
\end{equation*}
 Since $$ (f_1/\ov{f_1})(d_1)\fr{(f_2 / \ov{f_2})(d_2)}{\ov{(f_2/\ov{f_2})}(d_2)}=\fr{f_1(d_1)}{\ov{f}(d_1)}\fr{f_2(d_2)}{\ov{f}(d_2)} \quad \text{and} \quad c(W,f_2) c\left(W, \fr{f_1}{\ov{f_2}} \right)=c(W,f_1*f_2)=c(W,f)$$ 
we obtain the desired main term. Moreover, since $f_1 \in \Og_{k_1}$, it follows that the error term in (\ref{explain}) is $O(\log^{k_1+k_2-1} z)$. This completes the proof.
\end{proof}
\bigskip
\section{Estimates on Divisor sums}
The computation of $S_2$ transforms into certain divisor sums in arithmetic progressions. So, in this section we shall obtain asymptotic estimates for the sum
\begin{equation} \label{Dma} D_{m,a}(x)=\sum\lm_{\st{n \leq x\\n \cg a \md{m}}}\tau(n) \end{equation}
where $m$ is an even squarefree integer and $(a,m)=1$. To do so, we give an expression for $D_{m,a}(x)$ in terms of Exponential and Kloosterman sums (See Proposition \ref{idty}) and later use the Weil's bound. 
Using the convolution method, we can the use this expression of $D_{m,a}(x)$ to give asymptotic formula for the sum 
$$\sum\lm_{\st{n \leq x\\n \cg a \md{m}}}2^{\Og(n)}$$
These results can be found in Selberg's manuscript \cite[Pg 234-237]{AS}
\medskip
\begin{defn}
For any positive integer $m$, define
\begin{equation}
\label{Am}
A_m(x)=\sum\lm_{\st{n \leq x\\(n,m)=1}} \tau(n)
\end{equation}
\end{defn}
\medskip
The next Proposition gives us an expression for $A_m(x)$.
\begin{prop}
\label{Amest}
The following estimate holds for any squarefree positive integer $m \leq x$
$$A_m(x)=x\fr{\vp^2(m)}{m^2} \left( \log{x}+c + 2\sum\lm_{p \mid m}\fr{\log{p}}{p-1} \right) + O_{\eps}\left(x^{1/2} \sg_{-1/2}^2(m) \right)$$
\end{prop}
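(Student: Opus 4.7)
The plan is to detect the coprimality condition by Möbius inversion and reduce the problem to the classical Dirichlet divisor sum. Writing $\tau = 1 \ast 1$ and noting that $(ab,m)=1$ iff $(a,m)=1$ and $(b,m)=1$, Möbius inversion on both factors gives
\[
A_m(x) = \sum_{d_1 \mid m}\sum_{d_2 \mid m} \mu(d_1)\mu(d_2)\, D\!\left(\frac{x}{d_1 d_2}\right),
\qquad D(y) := \sum_{n \leq y}\tau(n).
\]
Into each term I would insert the classical estimate $D(y) = y\log y + (2\gamma-1)y + O(\sqrt{y})$ (with $D(y)=0$ for $y<1$). The error contributes at most $O(\sqrt{x})\sum_{d_1,d_2 \mid m}(d_1d_2)^{-1/2} = O\!\left(\sqrt{x}\,\sigma_{-1/2}^{2}(m)\right)$, which is exactly the error displayed in the statement; the boundary pairs with $d_1d_2 > x$ (where the ``inserted'' main term is spurious) contribute only $O(\tau^{2}(m))$, absorbed into the same bound.

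For the main term, I rewrite $\log(x/(d_1d_2)) = \log x - \log d_1 - \log d_2$, so the sum factorises. The two elementary identities needed are
\[
\sum_{d \mid m}\frac{\mu(d)}{d}=\frac{\varphi(m)}{m},
\qquad
\sum_{d \mid m}\frac{\mu(d)\log d}{d}=-\frac{\varphi(m)}{m}\sum_{p \mid m}\frac{\log p}{p-1}.
\]
The first is standard; the second I would derive by writing $\log d = \sum_{p \mid d}\log p$, swapping summation, and observing that for each fixed prime $p \mid m$,
\[
\sum_{\substack{d \mid m \\ p \mid d}}\frac{\mu(d)}{d} = -\frac{1}{p}\sum_{e \mid m/p}\frac{\mu(e)}{e} = -\frac{\varphi(m)}{m(p-1)}.
\]
Assembling the pieces and using the symmetry in $d_1,d_2$ yields
\[
x\,\frac{\varphi^{2}(m)}{m^{2}}\!\left(\log x + (2\gamma-1) + 2\sum_{p \mid m}\frac{\log p}{p-1}\right),
\]
which matches the displayed formula with the identification $c = 2\gamma-1$ (forced by the $m=1$ case, where the formula reduces to the Dirichlet divisor asymptotic).

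Honestly, the whole argument is routine once the hyperbola/Möbius decomposition is set up; there is no serious obstacle. The only delicate point is the two separate error contributions—the genuine $O(\sqrt{y})$ from Dirichlet's theorem, and the boundary from $d_1d_2>x$—which must both be controlled by $\sqrt{x}\,\sigma_{-1/2}^{2}(m)$. The slightly ``soft'' $O_{\eps}$ in the statement presumably allows one to be cavalier about absorbing factors of $\tau^{2}(m) \ll_{\eps} m^{\eps}$ arising from this boundary, without needing a tighter bookkeeping.
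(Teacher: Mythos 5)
Your proof is correct and is essentially the paper's own argument: the paper also reduces to the Dirichlet divisor sum via the convolution $\tau_m=\tau*g_m$ with $g_m(d)=\sum_{ab=d,\,a,b\mid m}\mu(a)\mu(b)$, which is just your double Möbius sum packaged into a single arithmetic function, and it then extracts the constants as $G_m(1)=\varphi^2(m)/m^2$ and $-G_m'(1)$ where you compute the two divisor sums by hand. Your handling of the error and of the boundary pairs $d_1d_2>x$ matches the paper's tail estimate $O(\tau^2(m)/x)$, and your identification $c=2\gamma-1$ is the one implicit in the paper's use of Dirichlet's asymptotic.
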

\begin{proof}
Define a function $\tau_m$ by
$$ \tau_m(n)=\begin{cases} \tau(n) & \text{if} \ (n,m)=1 \\ 0 & \text{otherwise} \end{cases} $$
Then the Dirichlet series for $\tau_m$ is
\begin{equation}
\label{Fm}
F_m(s)=\sum\lm_{\st{(n,m)=1}} \fr{\tau(n)}{n^s}=\prod\lm_{p \nmid m} \left( 1+\fr{2}{p^s} + \fr{3}{p^{2s}} + \dots \right)
=\zeta^2(s)\prod\lm_{p \mid m}\left( 1-p^{-s}\right)^2=\zeta^2(s)G_m(s)
\end{equation}
Let $g_m$ be the function defined by the Dirichlet series $G_m(s)$. Since $\zeta^2(s)$ is the Dirichlet series of $\tau(m)$, it follows from (\ref{Fm}) that $$\tau_m=\tau*g_m$$
Moreover, $g_m$ is given by
$$ g_m(n)=\sum\lm_{\st{a,b \mid m\\ab=n}} \mu(a) \mu(b) $$
Using the convolution method, we therefore have
\begin{equation}
\begin{split}
\sum\lm_{n \leq x} \tau_m(n)&=\sum\lm_{n \leq x} \sum\lm_{d \mid n} \tau\left(\fr{n}{d}\right) g_m(d)
=\sum\lm_{d \leq x} g_m(d) \sum\lm_{n \leq \fr{x}{d}} \tau(n)\\
&=\sum\lm_{d \leq x} g_m(d) 
\left[\fr{x(\log{x}-\log{d}+2\gm-1)}{d} + O\left(\fr{x^{1/2}}{d^{1/2}}\right)\right]\\
&=x(\log{x}+2\gm-1)\left( \sum\lm_{d \leq x} \fr{g_m(d)}{d} \right)-x\sum\lm_{d \leq x}\fr{g_m(d)\log{d}}{d}+O\left(x^{1/2} \sum\lm_{d \leq x}\fr{g_m(d)}{d^{1/2}}  \right)
\end{split}
\label{taumain}
\end{equation}

Now,
\begin{equation*}\sum\lm_{d}\fr{g_m(d)}{d}=G_m(1)=\fr{\vp^2(m)}{m^2}  \end{equation*}
We therefore have,
\begin{equation}
  \label{t1}
  \sum\lm_{d \leq x} \fr{g_m(d)}{d}=\fr{\vp^2(m)}{m^2} + O\left( \sum\lm_{d>x} \fr{g_m(d)}{d} \right)=\fr{\vp^2(m)}{m^2}+O\left( \fr{\tau^2(m)}{x} \right)
\end{equation}

Also,
\begin{equation*}
\sum\lm_{d}\fr{g_m(d)\log{d}}{d}=-G_m'(1)=-2G_m(1) \sum\lm_{p \mid m}\fr{\log p}{p-1}=-2\fr{\vp^2(m)}{m^2} \sum\lm_{p \mid m}\fr{\log p}{p-1}
\end{equation*}
Therefore,
\begin{equation}
 \label{t2}
\begin{split}
 \sum\lm_{d \leq x} \fr{g_m(d) \log d}{d}&=\sum\lm_d \fr{g_m(d) \log d}{d} + O\left( \sum\lm_{d>x} \fr{g_m(d) \log d}{d} \right)\\
&=-2\fr{\vp^2(m)}{m^2} \sum\lm_{p \mid m}\fr{\log p}{p-1} + O\left( \fr{\tau^2(m) \log x}{x} \right)
\end{split}
\end{equation}
and the error term in (\ref{taumain}) is 
\begin{equation}
\label{et}
\ll x^{1/2} \sum\lm_{d \leq x}\fr{g_m(d)}{d^{1/2}}=x^{1/2}\sum\lm_{d \leq x}\fr{1}{d^{1/2}}\sum\lm_{\st{a,b \mid m}}\ab{\mu(a)}\ab{\mu(b)}=x^{1/2}\sum\lm_{a,b \mid m}\fr{1}{(ab)^{1/2}}=x^{1/2} \sg_{-1/2}^2(m)
\end{equation}
Substituting the relations (\ref{t1}), (\ref{t2}) and (\ref{et}) back into (\ref{taumain}), we obtain the desired result.
\end{proof}
\medskip
Next, we give a relation of Divisor sums $D_{m,a}(x)$ and $A_m(x)$ (See \ref{Dma} and \ref{Am}) in terms of Exponential and Kloostermann sums.
\medskip
\begin{ntn}
By $\ds e(\al)$, we shall mean $\ds \exp\left( 2\pi i\al \right) $
\end{ntn}
\medskip
\begin{defn}[Kloosterman sums]
We define
\begin{equation}
\label{kloos}
S(a,b,m)=\sum\lm_{h\bar{h} \cg 1\md{m}}e\left(\fr{ah+b\bar{h}}{m}\right)
\end{equation} 
A sum of this type is called a \textbf{Kloosterman sum}.
\end{defn}
\medskip
We now state a famous result due to A. Weil.
\begin{thm}[A. Weil]
\label{aweil}
 \begin{equation*}
  |S(a,b,m)| \leq m^{1/2} \tau(m)(a,b,m)^{1/2}
 \end{equation*}
 \end{thm}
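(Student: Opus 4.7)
The plan is to establish the bound in three stages: a multiplicativity reduction via the Chinese Remainder Theorem, an elementary treatment of prime-power moduli, and the genuinely hard prime-modulus case which is Weil's theorem proper.

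First, I would reduce to the prime-power case. Writing $m = m_1 m_2$ with $(m_1, m_2) = 1$, parametrise $h$ modulo $m$ as $h \equiv h_1 m_2 \bar{m}_2 + h_2 m_1 \bar{m}_1 \pmod{m}$, where $\bar{m}_i$ denotes the inverse of $m_i$ modulo $m_{3-i}$. The exponential then splits cleanly across the two components, and after a further change of variables $h_i \mapsto m_{3-i} h_i$ one obtains a twisted multiplicativity
$$ S(a,b,m_1 m_2) \;=\; S(a, b \bar{m}_2^{\,2}, m_1)\, S(a, b \bar{m}_1^{\,2}, m_2). $$
Since $\tau$ is multiplicative and $(a,b,m_1 m_2) = (a,b,m_1)\cdot(a,b,m_2)$, the claimed inequality is multiplicative in $m$, so it suffices to prove it for $m = p^k$ a prime power.

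Second, for $m = p^k$ with $k \geq 2$ I would proceed by an elementary $p$-adic expansion. Decomposing $h = u + v\, p^{\lceil k/2 \rceil}$ with $u$ running over residues modulo $p^{\lceil k/2 \rceil}$ and $v$ over residues modulo $p^{\lfloor k/2 \rfloor}$, and using the Hensel-type expansion $\bar{h} \equiv \bar{u} - v\, p^{\lceil k/2 \rceil}\, \bar{u}^2 \pmod{p^k}$, the inner sum over $v$ collapses to a delta condition forcing $a \equiv b \bar{u}^2 \pmod{p^{\lfloor k/2 \rfloor}}$. Counting the admissible $u$ and evaluating the (possibly Gaussian) residual sum yields the required bound $|S(a,b,p^k)| \leq \tau(p^k)\, p^{k/2}\, (a,b,p^k)^{1/2}$, in fact often in sharper form.

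Third, for $m = p$ prime with $(ab, p) = 1$ the task is to prove $|S(a,b,p)| \leq 2\sqrt{p}$. I would follow Weil's geometric argument. Consider the Artin--Schreier cover $C : y^p - y = ax + b/x$ of $\mathbb{G}_m$ over $\mathbb{F}_p$; a smooth projective model has genus $(p-1)/2$. Expressing the point count $\#C(\mathbb{F}_p)$ through the orthogonality relation $\#\{y : y^p - y = c\} = \sum_{t \in \mathbb{F}_p} e(tc/p)$ produces a main term $p-1$ plus a sum over $t \neq 0$ of Kloosterman sums $S(ta, tb, p)$, each of which reduces to $|S(a,b,p)|$ after a multiplicative substitution. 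Weil's Riemann hypothesis for smooth projective curves, in the form $|\#C(\mathbb{F}_p) - (p+1)| \leq 2 g \sqrt{p}$, then delivers the required square-root cancellation. The degenerate case $p \mid (a,b)$ reduces directly to a Gauss or Ramanujan sum estimate.

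The principal obstacle is unmistakably this prime case: whereas the prime-power regime admits elementary $p$-adic stationary-phase analysis, the bound $|S(a,b,p)| \leq 2\sqrt{p}$ is tantamount to the Riemann hypothesis for a curve over $\mathbb{F}_p$ and possesses no elementary proof. One either invokes Weil's original argument via the zeta function of the curve and Castelnuovo's inequality, or the later polynomial method of Stepanov (in Bombieri's exposition), which is more accessible but still far removed from the sieve-theoretic techniques used elsewhere in this paper. It is therefore entirely appropriate that the theorem is invoked here as a black box.
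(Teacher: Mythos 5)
The paper states Weil's bound as a black-box classical result and provides no proof of its own, so there is no internal argument to compare against. Your three-stage outline is the standard one and is essentially correct: the twisted multiplicativity $S(a,b,m_1m_2)=S(a,b\bar m_2^{\,2},m_1)\,S(a,b\bar m_1^{\,2},m_2)$ is right after the scaling $h_1\mapsto m_2 h_1$; the $p$-adic stationary-phase reduction for $p^k$, $k\geq 2$, via $h=u+vp^{\lceil k/2\rceil}$ and $\bar h\equiv\bar u-v\bar u^2p^{\lceil k/2\rceil}$ is correct (with the Gaussian residual in the odd-$k$ case, as you note); and the prime case $|S(a,b,p)|\leq 2\sqrt{p}$ for $(ab,p)=1$ is indeed the deep content, equivalent to the Riemann hypothesis for the relevant Artin--Schreier curve and not accessible by the sieve or exponential-sum tools used elsewhere in this paper. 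Two small observations: since the paper only ever applies the bound to squarefree $m$, the prime-power case is not strictly needed for the application (though it does not hurt to include it for the general statement); and in the degenerate prime case $p\mid a$, $p\nmid b$ the sum is the Ramanujan sum $c_p(b)=-1$, while if $p\mid(a,b)$ it is $\varphi(p)=p-1$, both of which sit comfortably under the factor $(a,b,p)^{1/2}p^{1/2}\tau(p)$, so your catch-all remark is accurate. As you say yourself, leaving this as a cited result is the right call in context, and your sketch is a faithful account of the proof one would find in a reference such as Iwaniec--Kowalski.
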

\medskip
\begin{defn}
For $\ds |\al| \leq \fr{1}{2}$, define
\begin{equation*} S_t(\al)=\sum\lm_{1 \leq n \leq t}e(n\al) \end{equation*}
\end{defn}
\begin{lem}
\label{mod1}
\begin{equation*}
 |S_t(\al)| \leq \fr{1}{2|\al|}
\end{equation*}
 \end{lem}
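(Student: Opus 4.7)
The plan is to evaluate $S_t(\alpha)$ in closed form as a geometric sum and then bound its modulus using the standard lower bound for $|\sin(\pi \alpha)|$ on $[-1/2,1/2]$.

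First I would let $N=\lfloor t\rfloor$ and observe that $S_t(\alpha)$ is a geometric progression with common ratio $e(\alpha)$. Assuming $\alpha\ne 0$ (the case $\alpha=0$ is excluded by the strict inequality in the conclusion or is trivially handled by continuity), summing gives
\begin{equation*}
S_t(\alpha)=e(\alpha)\cdot\frac{e(N\alpha)-1}{e(\alpha)-1}.
\end{equation*}
Taking absolute values and using the identity $|e(\theta)-1|=2|\sin(\pi\theta)|$ in the denominator, together with the crude bound $|e(N\alpha)-1|\le 2$ in the numerator, yields
\begin{equation*}
|S_t(\alpha)| \le \frac{2}{2|\sin(\pi\alpha)|}=\frac{1}{|\sin(\pi\alpha)|}.
\end{equation*}

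The remaining step is the elementary inequality $|\sin(\pi\alpha)|\ge 2|\alpha|$ valid for $|\alpha|\le 1/2$. This follows from the concavity of $\sin$ on $[0,\pi/2]$: the chord joining $(0,0)$ and $(\pi/2,1)$ lies below the graph of $\sin$, so $\sin(x)\ge \tfrac{2}{\pi}x$ for $x\in[0,\pi/2]$; setting $x=\pi|\alpha|$ gives $|\sin(\pi\alpha)|\ge 2|\alpha|$. Substituting this into the previous display yields the claimed bound $|S_t(\alpha)|\le \frac{1}{2|\alpha|}$.

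There is no real obstacle here; the only mild point is making sure the range $|\alpha|\le 1/2$ is used exactly where needed, namely to guarantee $\pi|\alpha|\le \pi/2$ so that the Jordan-type inequality applies. The bound is sharp in the sense that as $\alpha\to 0$ the sum grows like $N$ while $1/(2|\alpha|)$ can be arbitrarily large, so the inequality is meaningful precisely when $|\alpha|$ is not too small compared with $1/t$.
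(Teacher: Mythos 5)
Your proof is correct and is the standard textbook argument; the paper states Lemma \ref{mod1} without proof precisely because it is this classical fact, so there is nothing in the source to compare against. The geometric-sum closed form, the identity $|e(\theta)-1|=2|\sin(\pi\theta)|$, and the Jordan-type inequality $|\sin(\pi\alpha)|\geq 2|\alpha|$ on $|\alpha|\leq 1/2$ together give exactly the claimed bound. One small aside: your parenthetical remark that $\alpha=0$ is ``excluded by the strict inequality in the conclusion'' is not quite right since the stated inequality is non-strict; the correct observation is simply that the right-hand side is $+\infty$ at $\alpha=0$, so that case is vacuous, which you also note via continuity.
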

\vskip 0.4in 
\begin{prop}
\label{idty}
Let $m$ be any positive integer and $(a,m)=1$. Then we have 
\begin{equation}
\label{d1x}
\bigintssss\lm_{1}^x D_{m,a}(t)\fr{dt}{t}=\sum\lm_{\st{n \leq x\\n \cg a \md{m}}}\tau(n)\log{\left(\fr{x}{n}\right)}=
\fr{1}{m^2}\sum\lm_{-\fr{m}{2} < c,d \leq \fr{m}{2}}S(ac,d,m)\bigintssss\lm_{1}^x S_t\left(\fr{c}{m}\right)S_{x/t}\left(\fr{d}{m} \right)\fr{dt}{t}
\end{equation}
\begin{equation}
\label{a1x}
\bigintssss\lm_{1}^x A_{m}(t)\fr{dt}{t}=\sum\lm_{\st{n \leq x\\(n,m)=1}}\tau(n)\log{\left(\fr{x}{n}\right)}=
\fr{1}{m^2}\sum\lm_{-\fr{m}{2} < c,d \leq \fr{m}{2}}S(ac,0,m)S(d,0,m) \bigintssss\lm_{1}^x S_t\left(\fr{c}{m}\right)S_{x/t}\left(\fr{d}{m} \right)\fr{dt}{t}
\end{equation}
\end{prop}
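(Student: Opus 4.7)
The first equality is immediate: write $\log(x/n) = \int_n^x dt/t = \int_1^x \mathbf{1}_{n \leq t}\,dt/t$ and swap summation with integration to recover $\int_1^x D_{m,a}(t)\,dt/t$.

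For the second equality, the plan is to expand the right-hand side by brute force and match it to the left. Writing
\[
S_t(c/m)\,S_{x/t}(d/m) = \sum_{n_1 \leq t}\sum_{n_2 \leq x/t} e\!\left(\frac{cn_1+dn_2}{m}\right),
\]
I would swap sum and integral, observing that for fixed $n_1,n_2\geq 1$ the set $\{t : n_1 \leq t \leq x/n_2\}$ is nonempty precisely when $n_1 n_2 \leq x$, and in that case contributes
\[
\int_{n_1}^{x/n_2} \frac{dt}{t} = \log\!\left(\frac{x}{n_1 n_2}\right).
\]
This reduces $\int_1^x S_t(c/m)S_{x/t}(d/m)\,dt/t$ to $\sum_{n_1 n_2 \leq x} e((cn_1+dn_2)/m)\log(x/(n_1 n_2))$. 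The task then becomes proving the orthogonality identity
\[
\frac{1}{m^2}\sum_{-m/2 < c,d \leq m/2} S(ac,d,m)\,e\!\left(\frac{cn_1+dn_2}{m}\right) = \mathbf{1}_{n_1 n_2 \equiv a \,(\mathrm{mod}\ m)},
\]
for integers $n_1,n_2 \geq 1$; granting this and summing against $\log(x/(n_1 n_2))$ recovers $\sum_{n \leq x,\, n \equiv a (m)} \tau(n)\log(x/n)$ since $\tau(n)$ counts ordered factorisations $n = n_1 n_2$.

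To verify the orthogonality identity, I would unfold the Kloosterman sum as $S(ac,d,m) = \sum_{h\bar h \equiv 1(m)} e((ach + d\bar h)/m)$ and interchange the $h$--sum with the $c,d$--sums. The inner exponential sums become
\[
\sum_{-m/2 < c \leq m/2} e\!\left(\frac{c(ah+n_1)}{m}\right) \cdot \sum_{-m/2 < d \leq m/2} e\!\left(\frac{d(\bar h + n_2)}{m}\right),
\]
each of which vanishes unless its linear form is divisible by $m$, in which case it equals $m$. Thus we are left with the count of pairs $h, \bar h$ modulo $m$ with $h\bar h \equiv 1$, $ah \equiv -n_1$, and $\bar h \equiv -n_2 \ (\mathrm{mod}\ m)$. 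The latter two equations force $(n_1,m) = (n_2,m) = 1$ and determine $h,\bar h$ uniquely; feeding them back into $h \bar h \equiv 1$ yields $n_1 n_2 \equiv a \ (\mathrm{mod}\ m)$, exactly as needed. The identity (\ref{a1x}) follows the same pattern, using instead that $S(ac,0,m) = \sum_{(h,m)=1} e(ach/m)$ and $S(d,0,m) = \sum_{(\bar h,m)=1} e(d\bar h/m)$ are independent Ramanujan-type sums, so the same orthogonality argument collapses the double sum to $\mathbf{1}_{(n_1,m)=1}\mathbf{1}_{(n_2,m)=1} = \mathbf{1}_{(n_1 n_2,m)=1}$.

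The only subtle step is the Kloosterman bookkeeping: one must be careful that the range $-m/2 < c,d \leq m/2$ is a complete residue system, so that orthogonality applies cleanly, and that combining the constraints on $h$ and $\bar h$ via $h\bar h \equiv 1$ indeed produces the product condition $n_1 n_2 \equiv a$ rather than some twist. Everything else — interchanging integration with finite or absolutely convergent sums, identifying divisor counts with ordered factorisations — is routine.
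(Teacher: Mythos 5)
Your proof is correct and follows essentially the same route as the paper: both expand the Kloosterman sum and the geometric sums $S_t$, $S_{x/t}$, interchange the order of summation, and use orthogonality of additive characters over the complete residue system $-m/2 < c,d \leq m/2$ to collapse the double $c,d$-sum to the condition $n_1 n_2 \equiv a \pmod m$ (respectively $(n_1 n_2, m)=1$). The only organizational difference is cosmetic — you first dispose of the outer integral to reduce to a $\log$-weighted divisor sum and then prove orthogonality as a clean identity, while the paper carries the integral along and performs the orthogonality step inside the integrand; the key calculation of $\lvert\{h\in\mathbb{Z}_m^* : ah\equiv -n_1, \bar h\equiv -n_2\}\rvert$ is identical in both.
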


\begin{proof}
First we rewrite the $RHS$ of $(\ref{d1x})$ in the form
\begin{equation} \label{intg} \fr{1}{m^2}\int\lm_{1}^x \left[ \sum\lm_{-\fr{m}{2} < c,d \leq \fr{m}{2}}S(ac,d,m)S_t\left(\fr{c}{m}\right)S_{x/t}\left(\fr{d}{m} \right)\right]\fr{dt}{t} \end{equation}
The sum inside the integral above is
\begin{equation}
\begin{split} 
\label{temp}
&\quad\sum\lm_{-\fr{m}{2} < c,d \leq \fr{m}{2}}S(ac,d,m)S_t\left(\fr{c}{m}\right)S_{x/t}\left(\fr{d}{m}\right) \\
&=\sum\lm_{-\fr{m}{2} < c,d \leq \fr{m}{2}}\sum\lm_{h \in Z_m^*}e\left(\fr{ach+d\bar{h}}{m}\right)
\sum\lm_{\st{r \leq t\\s \leq x/t}}e\left( \fr{rc+ds}{m} \right)\\
&=\sum\lm_{\st{r \leq t\\s \leq x/t}}\sum\lm_{h \in Z_m^*} 
\left( \sum\lm_{-\fr{m}{2} < c \leq \fr{m}{2}}e\left( \fr{c(ah+r)}{m} \right) \right)  
\left( \sum\lm_{-\fr{m}{2} < d \leq \fr{m}{2}}e\left( \fr{d(\bar{h}+s)}{m} \right) \right)
\end{split}
\end{equation}
Recalling the fact that $$\sum\lm_{b \md{m}}e\left(\fr{bx}{m}\right)=
\begin{cases} m & \text{if} \ x \cg 0 \md{m} \\ 0 & \text{otherwise} \end{cases}$$ 
equation $(\ref{temp})$ becomes
\begin{equation*}
\begin{split}
&\quad m^2\sum\lm_{\st{r\leq t\\s\leq x/t\\rs \cg a \md{m}}}
\sum\lm_{\st{h \in Z_m^*\\ah\cg -r \md{m}\\ \bar{h} \cg -s \md{m}}}1
=m^2\sum\lm_{\st{r \leq t\\s \leq x/t\\rs \cg a \md{m}}}1
\end{split}
\end{equation*}
The cardinality of $\{ h \in Z_m^* : ah \cg -r \md{m},\ \bar{h} \cg -s \md{m}\}$ is $1$ since $ha \cg -r \md{m}$ implies that $h \cg -r \bar{a} \md{m}$ and this forces a unique choice for $h \md{m}$.\\
Therefore, $(\ref{intg})$ becomes
\begin{equation*}
\quad\int\lm_{1}^x \left(\sum\lm_{\st{r \leq t\\s \leq x/t\\rs \cg a \md{m}}}1 \right)\fr{dt}{t}
=\sum\lm_{\st{rs \leq x\\rs \cg a \md{m}}}\log{\left( \fr{x}{rs} \right)}
=\sum\lm_{\st{n \leq x\\n \cg a \md{m}}}\tau(n)\log{\left( \fr{x}{n} \right)}
\end{equation*}
This proves $(\ref{d1x})$. \\
To prove the next part, we rewrite the $RHS$ of $(\ref{a1x})$ as
\begin{equation}
\label{intg2}
\fr{1}{m^2}\int\lm_{1}^x\left[\sum\lm_{-\fr{m}{2} < c,d \leq \fr{m}{2}}S(ac,0,m)S(d,0,m) 
S_t\left(\fr{c}{m}\right)S_{x/t}\left(\fr{d}{m} \right) \right]\fr{dt}{t}
\end{equation}

Again, the sum inside the integral above is
\begin{equation} 
\label{temp2}
\begin{split}
&\quad \sum\lm_{-\fr{m}{2} < c,d \leq \fr{m}{2}}S(ac,0,m)S(d,0,m)S_t\left(\fr{c}{m}\right)S_{x/t}\left( \fr{d}{m} \right)\\
&=\sum\lm_{-\fr{m}{2} < c,d \leq \fr{m}{2}}\sum\lm_{h_1,h_2 \in Z_m^*}e\left( \fr{ach_1 + dh_2}{m} \right) 
\left( \sum\lm_{\st{r \leq t\\s \leq x/t}} e\left( \fr{rc+sd}{m} \right) \right)\\
&=\sum\lm_{\st{r \leq t\\s \leq x/t}}\sum\lm_{h_1,h_2 \in Z_m^*} \left( \sum\lm_{-\fr{m}{2} < c \leq \fr{m}{2}}
e\left( \fr{c(ah_1+r)}{m} \right) \right) \left( \sum\lm_{-\fr{m}{2} < d \leq \fr{m}{2}} e\left( \fr{d(h_2+s)}{m} \right) \right)  
\end{split}
\end{equation}
Again, recalling the fact that 
$$\sum\lm_{b(\text{mod } m)}e\left(\fr{bx}{m}\right)=
\begin{cases} m & \text{if} \ x \cg 0 \md{m} \\ 0 & \text{otherwise} \end{cases}$$equation $(\ref{temp2})$ becomes
\begin{equation*}
\begin{split}
&\quad m^2\sum\lm_{\st{r \leq t\\s \leq x/t}}
\sum\lm_{\st{h_1,h_2 \in Z_m^*\\ah_1 \cg -r \md{m}\\h_2 \cg -s \md{m}}}1=m^2\sum\lm_{\st{r \leq t\\s \leq x/t\\(rs,m)=1}}1
\end{split}
\end{equation*}
Note that $|\{ (h_1,h_2) \in Z_m^* :  ah_1 \cg -r \md{m}, \ h_2 \cg -s \md{m}\}|$ is $1$ since $ah_1 \cg -r \md{m}$ implies that $h_1 \cg -r\bar{a} \md{m}$ for which there is only one solution. Similarly, there is a unique choice for $h_2$. 
Hence there is exactly one choice each for $h_1$ and $h_2$ modulo $m$. \\
Therefore, $(\ref{intg2})$ becomes
\begin{equation*}k
\int\lm_{1}^x\left[ \sum\lm_{\st{r \leq t\\s \leq x/t\\(rs,m)=1}}1\right]\fr{dt}{t}
=\sum\lm_{\st{rs \leq x\\(rs,m)=1}}\log{\left( \fr{x}{rs} \right)}
=\sum\lm_{\st{n \leq x\\(n,m)=1}}\tau(n)\log{\left( \fr{x}{n} \right)}
\end{equation*}
This completes the proof of the Proposition $\ref{idty}$.
\end{proof}
\medskip
\begin{lem}
Let $m$ be a squarefree positive integer. Then
\label{est}
\begin{equation}
\label{str}
\left| S(a,b,m)-\fr{S(a,0,m)(b,0,m)}{\vp(m)}\right|<2m^{1/2}\tau(m)(a,b,m)^{1/2} 
\end{equation}
When either of $a$ or $b$ is $\cg 0\md{m}$, the above difference is $0$.
\end{lem}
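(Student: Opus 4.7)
The plan is to exploit the twisted multiplicativity of Kloosterman sums over the squarefree modulus $m$, together with the Weil bound (Theorem~\ref{aweil}) applied at the ``hard'' factor. First I would factor $m = m_0 m_1 m_2 m_3$ according to how the primes of $m$ interact with $a$ and $b$: set $m_0 = (a,b,m)$, let $m_1$ collect those primes of $m$ dividing $a$ but not $b$, let $m_2$ collect those dividing $b$ but not $a$, and let $m_3$ collect the primes of $m$ coprime to $ab$. Squarefreeness of $m$ guarantees that these four pieces are pairwise coprime and that $m_0 m_1 m_2 m_3 = m$.

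Next I would apply the twisted multiplicativity $S(a,b,qr) = S(a\ov{r}, b\ov{r}, q)\, S(a\ov{q}, b\ov{q}, r)$, valid for coprime $q,r$ with $\ov{r}r \cg 1 \md{q}$ and $\ov{q}q \cg 1 \md{r}$, iteratively to obtain
\begin{equation*}
S(a,b,m) = \prod\lm_{i=0}^{3} S\bigl(a\,\ov{n_i},\, b\,\ov{n_i},\, m_i\bigr), \qquad n_i = m/m_i.
\end{equation*}
Examining each factor: at $m_0$ both entries vanish and we get $S(0,0,m_0) = \vp(m_0)$; at $m_1$ the first entry vanishes while the second is coprime to $m_1$, producing the Ramanujan sum $\mu(m_1)$; at $m_2$ we similarly get $\mu(m_2)$; and at $m_3$ we are left with a genuine Kloosterman sum $K_3$ with both entries coprime to $m_3$. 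Running the same decomposition on $S(a,0,m)$ and $S(b,0,m)$ and combining with $\vp(m) = \prod_i \vp(m_i)$ collapses the second quantity to
\begin{equation*}
\fr{S(a,0,m)\,S(b,0,m)}{\vp(m)} = \fr{\vp(m_0)\,\mu(m_1)\,\mu(m_2)}{\vp(m_3)}.
\end{equation*}
Subtracting, the expression to estimate telescopes to $\vp(m_0)\,\mu(m_1)\,\mu(m_2)\bigl[K_3 - 1/\vp(m_3)\bigr]$.

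It then remains to bound $|K_3 - 1/\vp(m_3)|$, which is where Theorem~\ref{aweil} enters: $|K_3| \leq m_3^{1/2}\tau(m_3)$, and for $m_3 \geq 2$ this dominates $1/\vp(m_3)$, giving $|K_3 - 1/\vp(m_3)| < 2\,m_3^{1/2}\tau(m_3)$. Combining with $\vp(m_0) \leq m_0 = (a,b,m)$ and the crude multiplicative bounds $m_0 m_3 \leq m$, $\tau(m_3) \leq \tau(m)$ yields the target $2\,m^{1/2}\tau(m)(a,b,m)^{1/2}$. The edge cases $a \cg 0 \md{m}$ or $b \cg 0 \md{m}$ force $m_3 = 1$, whence $K_3 = 1 = 1/\vp(m_3)$ and the difference vanishes identically, matching the second sentence of the lemma. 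The proof is essentially bookkeeping once the decomposition is set up; the main obstacle, and really the only subtle point, is verifying that the inequality is genuinely strict, which follows from the positivity of $1/\vp(m_3)$ combined with $1/\vp(m_3) < m_3^{1/2}\tau(m_3)$ whenever $m_3 \geq 2$.
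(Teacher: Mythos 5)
Your argument is correct, but it takes a genuinely different route from the paper's. The paper handles the main case ($m \nmid a$, $m \nmid b$) by a blunt triangle inequality: it applies Weil directly to get $|S(a,b,m)| \leq m^{1/2}\tau(m)(a,b,m)^{1/2}$, and then separately bounds the subtracted term via the explicit Ramanujan-sum evaluation $|S(a,0,m)| = \vp(a,m)$ (valid for $m$ squarefree) together with the identity $\vp(u)\vp(v) = \vp\bigl((u,v)\bigr)\vp\bigl([u,v]\bigr)$, yielding $|S(a,0,m)S(b,0,m)|/\vp(m) \leq \vp(a,b,m) < (a,b,m)^{1/2}m^{1/2}$; adding gives the factor of $2$. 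You instead peel the common degenerate structure off both terms using the twisted multiplicativity of Kloosterman sums over the coprime factorization $m = m_0m_1m_2m_3$, so that both $S(a,b,m)$ and $S(a,0,m)S(b,0,m)/\vp(m)$ carry the factor $\vp(m_0)\mu(m_1)\mu(m_2)$, and the whole inequality collapses to a Weil bound on a single nondegenerate Kloosterman sum to the modulus $m_3$. Your version is more structural — it makes transparent that the difference vanishes precisely when the nondegenerate part $m_3$ is trivial, which also gives the second sentence of the lemma for free, and it clearly isolates Weil as the only nontrivial input. The paper's version is shorter and uses less machinery (no twisted multiplicativity), at the cost of treating $S(a,b,m)$ and the Ramanujan product as unrelated objects. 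One small remark on your writeup: you cite the identity $S(a,b,qr) = S(a\ov{r},b\ov{r},q)\,S(a\ov{q},b\ov{q},r)$; whichever precise normalization of the inverses one adopts, the twist is by units of $\mb{Z}/m_i\mb{Z}$ and so leaves intact the property of an entry being $\cg 0$ or coprime to $m_i$, which is all your factor-by-factor analysis uses — so the bookkeeping goes through regardless.
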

\begin{proof}
First, note that when either of $a$ or $b$ is $\cg 0 \md{m}$, say $b \cg 0\md{m}$, then the quantity in the LHS of (\ref{str}) is 
$$ \left| S(a,0,m) - \fr{S(a,0,m) S(0,0,m)}{\vp(m)} \right|=0 $$ since $S(0,0,m)=\vp(m)$.
\vskip 0.08in

When neither of $a$ or $b$ is divisible by $m$, we apply the trivial estimate. By Theorem $\ref{aweil}$
$$ \ds |S(a,b,m)|<m^{1/2}\tau(m)(a,b,m)^{1/2}$$
Secondly, since $m$ is squarefree, we have
$$S(a,0,m)=\sum\lm_{d \mid (a,m)}d\mu\left(\fr{m}{d}\right)=\mu(m)\sum\lm_{d \mid (a,m)}d\mu(d)=\mu(m)\mu(a,m)\vp(a,m)$$
So,
\begin{equation}|S(a,0,m)| = \vp(a,m) \end{equation}
Therefore 
\begin{equation*}
\begin{split}
\left|\fr{S(a,0,m)S(b,0,m)}{\vp(m)}\right| &= \fr{\vp(a,m)\vp(b,m)}{\vp(m)} \leq \fr{\vp(a,b,m)\vp[(a,m),(b,m)]}{\vp(m)} < \vp(a,b,m)\\
&<(a,b,m)^{1/2}m^{1/2}
\end{split}
\end{equation*}
This completes the proof.
\end{proof}
\bigskip
We shall make use of Propositions \ref{Amest} and \ref{idty} in order to obtain an expression for $D_{m,a}(x)$ with a decent error term.
\bigskip
\begin{thm}
\label{maint}
Let $m$ be a squarefree positive integer and let $(a,m)=1$. Then for any $\eps>0$, we have
\begin{equation}
\begin{split}
D_{m,a}(x)&=x\fr{\vp(m)}{m^2}\left( \log{x}+c+2\sum\lm_{p \mid m}\fr{\log{p}}{p-1}\right)
+ O_{\eps}\left( \fr{x^{1/2+\eps}}{m^{1/4}} \right)
\end{split}
\end{equation}
whenever $m \leq x^{2/3-\eps}$.
\end{thm}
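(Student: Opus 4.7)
The main observation is that, comparing Proposition \ref{Amest} with the main term claimed in the theorem, the ratio of their leading coefficients is exactly $\vp(m)$. This suggests proving the relation $D_{m,a}(x) = A_m(x)/\vp(m) + \text{error}$. The strategy is to first compare the logarithmically-integrated versions $\Phi(x) := \int\lm_1^x D_{m,a}(t)\,dt/t$ and $\Psi(x) := \int\lm_1^x A_m(t)\,dt/t$ using Proposition \ref{idty}, and then recover $D_{m,a}(x)$ from $\Phi(x)$ by a differencing argument.

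For the comparison, subtracting $\vp(m)^{-1}$ times (\ref{a1x}) from (\ref{d1x}) gives
\begin{equation*}
\Phi(x) - \fr{\Psi(x)}{\vp(m)} = \fr{1}{m^2}\sum\lm_{\st{-m/2<c,d\leq m/2\\c,d \neq 0}}\Delta(c,d)\,J(c,d),
\end{equation*}
where $\Delta(c,d) := S(ac,d,m) - S(ac,0,m)S(d,0,m)/\vp(m)$ and $J(c,d) := \int\lm_1^x S_t(c/m)S_{x/t}(d/m)\,dt/t$; terms with $c$ or $d$ a multiple of $m$ drop out thanks to the second clause of Lemma \ref{est}. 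By Lemma \ref{est}, $|\Delta(c,d)| \ll_{\eps} m^{1/2+\eps}(ac,d,m)^{1/2}$, and by Lemma \ref{mod1} one has $|S_t(c/m)| \leq \min(t,m/(2|c|))$ (and similarly for $S_{x/t}(d/m)$), so the $t$-integral defining $J(c,d)$ can be split at $t = m/(2|c|)$ and $t = 2|d|x/m$ to produce an explicit bound on $J(c,d)$. Summing these bounds weighted by $(ac,d,m)^{1/2}$ over $c,d$ then controls $\Phi(x) - \Psi(x)/\vp(m)$.

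Proposition \ref{Amest} combined with partial summation yields the asymptotic formula for $\Psi(x)$, which when combined with the error estimate above produces an asymptotic for $\Phi(x)$. Finally, $D_{m,a}(x)$ is extracted via a differencing argument: for a small parameter $\dl>0$,
\begin{equation*}
\Phi(x(1+\dl)) - \Phi(x) = \log(1+\dl)\,D_{m,a}(x) + R(x,\dl),
\end{equation*}
with $0 \leq R(x,\dl) \leq \log(1+\dl)(D_{m,a}(x(1+\dl)) - D_{m,a}(x))$, and the latter increment is of order $\dl\, x\,\vp(m)\log x / m^2$ by the expected main-term density. Optimizing $\dl$ balances the differencing loss against the inherited error from $\Phi$.

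The main obstacle is the estimate for $\Phi(x) - \Psi(x)/\vp(m)$: the $t$-integral defining $J(c,d)$ splits naturally into three regions whose contributions must be summed carefully against the GCD weights $(ac,d,m)^{1/2}$ from the Weil bound, in such a way that the factor $m^{-1/4}$ (rather than something weaker) survives in the final error. The range $m \leq x^{2/3-\eps}$ enters precisely at this point, guaranteeing that the error remains below the main term and delivering the target bound $O_{\eps}(x^{1/2+\eps}/m^{1/4})$.
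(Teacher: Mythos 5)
Your strategy matches the paper's: compare the logarithmically-integrated sums via Proposition \ref{idty}, bound the Kloosterman deviation with Lemma \ref{est} and Lemma \ref{mod1} to get $\Phi(x)-\Psi(x)/\vp(m)\ll m^{1/2}x^{\eps}$, and then recover $D_{m,a}(x)$ by a short-interval argument; the paper phrases this last step as a contradiction on a symmetric window $(x-\dl,x+\dl)$ with $\dl=m^{3/4}x^{1/2+\eps}$, while your one-sided forward difference with relative width $\dl\asymp m^{3/4}x^{-1/2}$ is an equivalent reformulation, and the finer splitting of the $t$-integral in $J(c,d)$ that you propose is not needed (the uniform bound $|S_t(c/m)|\leq m/(2|c|)$ already suffices). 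One caution: to avoid circularity you should bound $D_{m,a}(x(1+\dl))-D_{m,a}(x)$ by the trivial count $\ll(\dl x/m+1)\max_{n\leq 2x}\tau(n)\ll \dl x\,x^{\eps}/m$ rather than appealing to ``the expected main-term density,'' which would presuppose the theorem being proved.
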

\begin{proof}
Consider the difference 
\begin{equation}
\label{mdif}
\bigintssss\lm_{1}^x\left[ D_{m,a}(t) -\fr{1}{\vp(m)}A_{m}(t)\right]\fr{dt}{t}
\end{equation}
From Proposition $\ref{idty}$, the above expression (\ref{mdif}) is
\begin{equation}
\label{nice}
\fr{1}{m^2}\bigintssss\lm_{1}^x \left[\sum\lm_{-\fr{m}{2} < c,d \leq \fr{m}{2}}\left( S(ac,d,m)-\fr{S(ac,0,m)S(d,0,m)}{\vp(m)} \right) S_t\left( \fr{c}{m} \right)S_{x/t}\left( \fr{d}{m} \right) \right]\fr{dt}{t}
\end{equation}
By Lemma $\ref{est}$, the quantity inside the integral in (\ref{nice}) is zero when either $c \cg 0 \md{m}$  or $d \cg 0 \md{m}$. So we may assume that neither of $c,d$ is $0$. Then we have
\begin{equation*}
\left|S(ac,d,m)-\fr{S(ac,0,m)S(d,0,m)}{\vp(m)}\right|<2m^{1/2}\tau(m)(ac,d,m)^{1/2} \leq 2m^{1/2}\tau(m)(c,d,m)^{1/2}
\end{equation*}
Moreover, by Lemma \ref{mod1}, we have
$$\left| S_t\left( \fr{c}{m} \right) \right| \leq \fr{m}{2|c|} \quad \text{and} \quad \left| S_t\left( \fr{d}{m} \right) \right| \leq \fr{m}{2|d|}$$
Therefore, (\ref{nice}) is
\begin{equation}
\label{s_11}
\begin{split}
&\leq \fr{1}{m^2} \left[\sum\lm_{\st{-\fr{m}{2} < c,d \leq \fr{m}{2}\\c,d \neq 0}}\left( 2m^{1/2}\tau(m)(c,d,m)^{1/2} \right) \fr{m^2}{4|c||d|}\right]\bigintssss\lm_{1}^x\fr{dt}{t}\\
&\leq 2m^{1/2}\tau(m)\log{x}\sum\lm_{0 < c,d \leq \fr{m}{2}}\fr{(c,d,m)^{1/2}}{cd} \leq 2m^{1/2} \tau(m)\log{x}
\sum\lm_{0<c,d \leq \fr{m}{2}}\fr{(c,d)^{1/2}}{cd}
\end{split}
\end{equation}
Observe that
\begin{equation}
\begin{split}
\label{sum2}
\sum\lm_{0 < c,d \leq \fr{m}{2}}\fr{(c,d)^{1/2}}{cd}&=\sum\lm_{g \leq \fr{m}{2}}g^{1/2} \sum\lm_{\st{0<c,d \leq \fr{m}{2}\\(c,d)=g}}\fr{1}{cd}=\sum\lm_{g \leq \fr{m}{2}}g^{-3/2} \sum\lm_{0<c',d' \leq \fr{m}{2g^2}}\fr{1}{c'd'} \leq \zeta(3/2) \log^2{m}\\
\ll \log^2{x}
\end{split}
\end{equation}
Therefore, from $(\ref{s_11})$ and $(\ref{sum2})$, we get
\begin{equation*}
\left|\bigintssss\lm_{1}^x\left( D_{m,a}(t) -\fr{1}{\vp(m)}A_{m}(t)\right)\fr{dt}{t}\right| \ll m^{1/2}\tau(m)\log^3{x} \ll m^{1/2}x^{\eps}
\end{equation*}
We now let
$$ A(x)=D_{m,a}(x) \quad \text{and} \quad B(x)=\fr{1}{\vp(m)}A_m(x)$$
Let $C(x)=A(x)-B(x)$. We want to show that $C(x) \ll \fr{x^{1/2+\eps}}{m^{1/4}}$. Let
\begin{equation} \dl=m^{3/4} x^{1/2+\eps} \label{dl} \end{equation}
It is then clear that $\dl>m$ whenever $m \leq x^{2/3-\eps}$. Since $\int_1^x \fr{C(t)}{t} \, dt \ll m^{1/2}x^{\eps/2}$, it follows that 
\begin{equation} \label{xdl} \int_{x-\dl}^{x+\dl}\fr{C(t)}{t} \, dt \ll m^{1/2} x^{\eps/2}\end{equation}
Note that we have
$$ C(x)=\sum\lm_{\st{n \leq x\\(n,m)=1}} \tau(n) \left( 1_{n \cg a \md{m}} - \fr{1}{\vp(m)} \right) $$
Therefore, for any interval of length $m$, the jump in $C(x)$ is atmost $\fr{\dl}{m} x^{\eps}$. Now suppose that $C(x) \gg \fr{x^{1/2+\eps}}{m^{1/4}}$ for some $x$. 
Consider the neighbourhood $I_x=(x-\dl,x+\dl)$ of $x$. We then have for any $t \in I_x$ that
$$ C(t) \gg \fr{x^{1/2+\eps}}{m^{1/4}} + O \left( \fr{\dl}{m} x^{\eps} \right) $$
Under this choice of $\dl$ in (\ref{dl}), one immediately sees that $C(t) \gg \fr{x^{1/2+\eps}}{m^{1/4}}$. It then follows that
$$ \int_{x-\dl}^{x+\dl} \fr{C(t)}{t} \, dt \gg \fr{x^{1/2+\eps}}{m^{1/4}} \int_{x-\dl}^{x+\dl} \fr{dt}{t} = \fr{x^{1/2+\eps}}{m^{1/4}} \log \left( \fr{x+\dl}{x-\dl} \right) 
\gg \dl \fr{x^{-1/2+\eps}}{m^{1/4}}=m^{1/2} x^{2\eps}$$
This is a contradiction to (\ref{xdl}) and the proof is complete.
\end{proof}
\bigskip
\begin{thm}
Let $m=Wm'$ be an even squarefree integer, where $m'$ is odd and $(W,m')=1$. Suppose $(a,m)=1$. Then for any $\eps>0$
\begin{equation}
\sum\lm_{\st{n \leq x \\ n \cg a \md{m}}} 2^{\Og(n)} = x\fr{\vp(W)}{W^2}  \fr{c(W)}{g(m')} \left( \log{x}+c+2\sum\lm_{\st{p \mid Wm' \\ p>2}}\fr{\log{p}}{p-2} \right)
+O_{\eps}\left(\fr{x^{1/2+\eps}}{m^{1/4}}\right)
\end{equation}
whenever $m \leq x^{2/3-\eps}$. Note that here $\ds c(W)=\prod\lm_{p \nmid W}\fr{(p-1)^2}{p(p-2)}$ and $\ds g(p)=\fr{p(p-1)}{p-2}$.
\end{thm}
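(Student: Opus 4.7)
The plan is to use the Dirichlet convolution identity $2^{\Og} = \tau * g$, where $g$ is the multiplicative function with $g(1)=1$, $g(p)=0$, and $g(p^k) = 2^{k-2}$ for $k \geq 2$ (so $g$ is supported on squarefull integers and has $\sum_n g(n)/n^s = \prod_p (1 - p^{-s})^2/(1 - 2 p^{-s})$). Since $m$ is even and $(a,m)=1$, every $n \cg a \md{m}$ is coprime to $m$, so both factors in $n = de$ are coprime to $m$, giving
\[
\sum_{\st{n \leq x \\ n \cg a \md{m}}} 2^{\Og(n)} = \sum_{\st{e \text{ squarefull}\\(e,m)=1}} g(e) \, D_{m, a\bar e}(x/e),
\]
where $\bar e$ is the multiplicative inverse of $e$ modulo $m$. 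Crucially, because $W = \prod_{p \leq D}p$ with $D = \log\log\log x$ and $(W,m')=1$, the restriction $(e,m)=1$ forces every prime factor of $e$ to exceed $D$.

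I then split the sum at a cutoff $Y$ suitably chosen in the range $x^{3\eps/4} \leq Y \leq x^{3\eps/2}$. For $e \leq Y$, the hypothesis $m \leq (x/e)^{2/3 - \eps'}$ holds for a small $\eps' > 0$, and I apply Theorem \ref{maint} to each $D_{m, a\bar e}(x/e)$. For $e > Y$, I use the trivial bound $D_{m,\cdot}(y) \ll y^{1+\eps}/m + y^\eps$. The error control rests on the fact that, with the smallest prime factor of $e$ exceeding $D$, the Dirichlet series $\sum_{(e,m)=1}|g(e)|/e^s$ converges for $\sigma > \log 2/\log D = o(1)$ as $x \to \infty$; consequently for any fixed $\sigma > 0$, $\sum_{e > Y,\, (e,m)=1,\, \text{squarefull}} |g(e)|/e^\sigma \ll Y^{-\sigma + o(1)}$, which suffices to absorb the summed Theorem \ref{maint} error, the tail of the truncated main term, and the trivial tail into an overall error of $O(x^{1/2+\eps}/m^{1/4})$.

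Extending the sum of main terms from $e \leq Y$ to all squarefull $e$ coprime to $m$ (at negligible cost) produces, with $H(s) := \sum_{(e,m)=1}g(e)/e^s = \prod_{p \nmid m}(1-p^{-s})^2/(1-2p^{-s})$, the asymptotic
\[
\fr{x\vp(m) H(1)}{m^2}\left[\log x + c + 2\sum_{p \mid m}\fr{\log p}{p-1} - 2\sum_{p \nmid m}\fr{\log p}{(p-1)(p-2)}\right],
\]
using $H(1) = \prod_{p \nmid m}(p-1)^2/(p(p-2))$ and $H'(1)/H(1) = -2\sum_{p \nmid m}\log p/((p-1)(p-2))$. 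Two algebraic simplifications then finish the proof. First, using $m = Wm'$ with $(W,m')=1$ and the definitions $c(W) = \prod_{p \nmid W}(p-1)^2/(p(p-2))$ and $g(p) = p(p-1)/(p-2)$, one checks $\vp(m) H(1)/m^2 = \vp(W) c(W)/(W^2 g(m'))$. Second, the identity $\fr{1}{p-1} + \fr{1}{(p-1)(p-2)} = \fr{1}{p-2}$ (valid for $p > 2$) converts the bracket into $\log x + c^* + 2\sum_{p \mid Wm',\, p > 2}\log p/(p-2)$, where $c^* = c + 2\log 2 - 2\sum_{p > 2}\log p/((p-1)(p-2))$ is an absolute constant, again denoted by $c$ in the final statement.

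The hard part will be the tail estimate: $g$ can grow as fast as $e^{\log 2/\log p_{\min}(e)}$, so a direct truncation would produce an error well beyond $x^{1/2+\eps}/m^{1/4}$. The argument only succeeds because the primorial $W$ forces every prime factor of $e$ above $D = \log\log\log x$, making the associated Dirichlet series converge in arbitrarily narrow strips to the right of $\sigma = 0$ and providing the required savings, uniformly enough to yield the claimed error for every $\eps > 0$ once $x$ is sufficiently large.
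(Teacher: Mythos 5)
Your overall plan — expand $2^{\Omega} = \tau * g$, sum $g(e)\,D_{m,a\bar e}(x/e)$, apply Theorem~\ref{maint} to each $D_{m,\cdot}$, and compute the constants via the Euler product — is the same as the paper's, and your main-term algebra (the identification of $H(1)$ with $c(W)/g(m')$ via $\vp(m)/m^2$, and the telescoping $\frac{1}{p-1}+\frac{1}{(p-1)(p-2)}=\frac{1}{p-2}$) is correct and matches the paper.

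However, the part you yourself flag as ``the hard part'' contains a genuine error. You assert that the Dirichlet series $\sum_{(e,m)=1,\ \text{sq.full}}|g(e)|/e^s$ converges for $\sigma > \log 2/\log D$, ``making the associated Dirichlet series converge in arbitrarily narrow strips to the right of $\sigma=0$.'' This is false. The contribution of $e=p^2$ alone is $\sum_{p>D}p^{-2s}$, which diverges for every $\sigma\le 1/2$; the abscissa of absolute convergence of this series is exactly $1/2$, not $\log 2/\log D$. The coprimality with $W$ does not lower the abscissa below $1/2$; what it does is ensure $g(e)\ll e^{\log 2/\log D}=e^{o(1)}$ (by forcing all prime factors of $e$ above $D$), which is what makes the partial sum $\sum_{e\le x}g(e)/e^{1/2}$ grow only like $\log x$ rather than a power of $x$. (Had $2\mid e$ been allowed, $\sum_k g(2^k)2^{-k/2}=\sum_k 2^{k/2-2}$ already contributes $\asymp x^{1/2}$.) As a consequence, your claimed tail bound $\sum_{e>Y}|g(e)|/e^\sigma\ll Y^{-\sigma+o(1)}$ is off by $Y^{1/2}$: the correct bound for $\sigma>1/2$ is $\ll Y^{1/2-\sigma+o(1)}$, and for $\sigma\le 1/2$ the tail does not decay at all. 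With this correction your cutoff $Y\in[x^{3\eps/4},x^{3\eps/2}]$ is far too small for the truncated terms and the trivial tail to be absorbed into $O(x^{1/2+\eps}/m^{1/4})$.

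The paper avoids a cutoff entirely: it applies the error bound $(x/d)^{1/2+\eps}/m^{1/4}$ of Theorem~\ref{maint} formally for all $d$ and then sums against $a_d/d^{1/2}$, using exactly the $O(\log x)$ bound at the abscissa $1/2$. Although the paper also does not say so explicitly, this is justified because for $m>(x/d)^{2/3-\eps}$ — the regime outside Theorem~\ref{maint} — the trivial inequalities $D_{m,\cdot}(y)\ll (y/m+1)y^{o(1)}$ and $M(y)\ll (y\log y)/m$ already give $|D-M|\ll y^{1/2+\eps}/m^{1/4}$ by direct comparison, so the error formula holds in the full range. Your proposal, as written, does not establish that; fixing it requires replacing the incorrect abscissa claim by the $\sigma=1/2$ threshold, replacing the tail bound by $Y^{1/2-\sigma+o(1)}$, and either dropping the cutoff in favour of the paper's ``trivial bound covers the complementary range'' observation, or choosing $Y$ of size roughly $x/m^{3/2}$ and then checking each tail piece separately.
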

\begin{proof}
We shall make use of the Convolution method. We write
$$ 2^{\Og(n)}=\sum\lm_{d \mid n} a_d\tau\left( \fr{n}{d} \right) $$
Then the Dirichlet series of $a_n$ is given by
$$ A(s)= \prod\lm_{p} \fr{(1-p^{-s})^2}{(1-2p^{-s})}=\prod\lm_p \left( 1+\fr{1}{p^s(p^s-2)} \right)=\zeta(2s)B(s)$$
It then follows that $a_n \geq 0$, for all $n$. \\
Note here that $B(s)$ is convergent for $\sg>1/3$. Observe that $A(s)$ has a simple pole at $s=1/2$. This means that
\begin{equation} \sum\lm_{n \leq x}\fr{a_n}{n^{1/2}}=O(\log{x}) \label{1/2}\end{equation}
We have
\begin{equation*}
\begin{split}
\sum\lm_{\st{n \leq x\\n \cg a\md{m}}}2^{\Og(n)}&=\sum\lm_{\st{n \leq x\\n \cg a\md{m}}} \sum\lm_{d \mid n} a_d \tau\left( \fr{n}{d} \right)=\sum\lm_{\st{d \leq x\\(d,m)=1}}a_d 
\sum\lm_{\st{n \leq x\\n \cg a\md{m}\\n \cg 0\md{d}}}\tau\left( \fr{n}{d} \right)\\
&=\sum\lm_{\st{d \leq x\\(d,m)=1}}a_d \sum\lm_{\st{n \leq \fr{x}{d}\\n \cg a'\md{m}}} \tau(n)=\sum\lm_{\st{d \leq x\\(d,m)=1}}a_d \ D_{m,a'}\left(\fr{x}{d}\right)\\
&=x\sum\lm_{\st{d \leq x\\(d,m)=1}} \fr{a_d}{d} \fr{\vp(m)}{m^2}\left( \log{x}-\log{d}+c+2\sum\lm_{p \mid m} \fr{\log{p}}{p-1} \right)+ O_{\eps} \left(\fr{x^{1/2+\eps}}{m^{1/4}} 
\sum\lm_{\st{d \leq x}} \fr{a_d}{d^{1/2}}  \right)
\end{split}
\end{equation*}
From (\ref{1/2}), the error term above becomes $O\left( \fr{x^{1/2+\eps}}{m^{1/4}} \log{x}\right)=O\left(\fr{x^{1/2+\eps}}{m^{1/4}}\right)$. The main term is
\begin{equation}
\label{mt}
x\fr{\vp(m)}{m^2}\left(\log{x}+c+2\sum\lm_{p \mid m}\fr{\log{p}}{p-1}\right) \sum\lm_{\st{d \leq x\\(d,m)=1}} \fr{a_d}{d} - x\fr{\vp(m)}{m^2} \sum\lm_{\st{d \leq x\\(d,m)=1}} \fr{a_d}{d} \log{d}
\end{equation}
Now,
\begin{equation}
\label{ad/d}
\sum\lm_{\st{d \leq x\\(d,m)=1}} \fr{a_d}{d}=\sum\lm_{(d,m)=1}\fr{a_d}{d} + O \left( \fr{\log x}{\sr{x}} \right)=\prod\lm_{p \nmid m}\fr{(p-1)^2}{p(p-2)} + O\left( \fr{\log x}{\sr{x}} \right)
\end{equation}
Moreover, since 
$$ \sum\lm_{(d,m)=1}\fr{a_d}{d^s}=\prod\lm_{p \nmid m}\left( 1+\fr{1}{p^s(p^s-2)} \right)$$
we can take logarithmic derivatives to obtain
\begin{equation*}
\sum\lm_{(d,m)=1} \fr{a_d}{d^s} \log{d}=-\prod\lm_{p \nmid m}\left( 1+\fr{1}{p^s(p^s-2)} \right) 
\left( -2\sum\lm_{p \nmid m} \fr{\log{p}}{(p^s-1)(p^s-2)} \right)
\end{equation*}
Substitute $s=1$ to obtain
\begin{equation}
\label{adlogd/d}
\sum\lm_{(d,m)=1}\fr{a_d}{d}\log{d}=\prod\lm_{p \nmid m}\fr{(p-1)^2}{p(p-2)} \left( 2\sum\lm_{p \nmid m} \fr{\log{p}}{(p-1)(p-2)} \right)=\prod\lm_{p \nmid m}\fr{(p-1)^2}{p(p-2)} \left( C_0-2\sum\lm_{\st{p \mid m\\p>2}} \fr{\log{p}}{(p-1)(p-2)} \right)
\end{equation}
Substituting the equations (\ref{ad/d}) and (\ref{adlogd/d}) into (\ref{mt}), we obtain
$$ \sum\lm_{\st{n \leq x\\n \cg a\md{m}}}2^{\Og(n)}=x\fr{\vp(m)}{m^2} \prod\lm_{p \nmid m}\fr{(p-1)^2}{p(p-2)} \left( \log{x} + c + 2\sum\lm_{\st{p \mid m\\p>2}} \fr{\log{p}}{p-2} \right) + O_{\eps}\left( \fr{x^{1/2+\eps}}{m^{1/4}} \right) $$
The proof is completed with the following observation
$$ \fr{\vp(m)}{m^2} c(m)=\prod\lm_{p \mid m} \fr{(p-1)}{p^2} \prod\lm_{p \nmid m} \fr{(p-1)^2}{p(p-2)}=\fr{\vp(W)}{W^2}
\fr{1}{g(m')} \prod\lm_{p \nmid W} \fr{(p-1)^2}{p(p-2)}=\fr{\vp(W)}{W^2} \fr{c(W)}{g(m')} $$
\end{proof}
We have an immediate Corollary from the above Theorem.
\begin{cor}
\label{2^Og}
Let $m=Wm'$ be an even squarefree integer, where $m'$ is odd and $(W,m')=1$. Suppose $(a,m)=1$. Then for any $\eps>0$ and $m \leq x^{2/3-\eps}$, we have
\begin{equation*}
\sum\lm_{\st{n \sim x \\ n \cg a \md{m}}} 2^{\Og(n)} = x\fr{\vp(W)}{W^2} \fr{c(W)}{g(m')} \left( \log{x}+c'+
2\sum\lm_{\st{p \mid Wm' \\ p>2}}\fr{\log{p}}{p-2} \right)+O_{\eps}\left(\fr{x^{1/2+\eps}}{m^{1/4}}\right)
\end{equation*}
where $n \sim x$ means that $x<n \leq 2x$.
\end{cor}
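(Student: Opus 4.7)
The plan is to reduce this corollary to a straightforward subtraction argument, since the preceding theorem already gives the asymptotic for the sum restricted to $n \leq x$. Writing
\[
\sum\lm_{\st{n \sim x \\ n \cg a \md{m}}} 2^{\Og(n)} \;=\; \sum\lm_{\st{n \leq 2x \\ n \cg a \md{m}}} 2^{\Og(n)} \;-\; \sum\lm_{\st{n \leq x \\ n \cg a \md{m}}} 2^{\Og(n)},
\]
I would apply the previous theorem to each of the two sums on the right. This is legitimate because $m \leq x^{2/3-\eps}$ implies $m \leq (2x)^{2/3-\eps}$ (for $x$ large enough, perhaps at the cost of shrinking $\eps$ slightly), so the hypothesis of the theorem is satisfied in both cases.

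Setting $K = \fr{\vp(W)}{W^2}\fr{c(W)}{g(m')}$ and $T = 2\sum_{\st{p \mid Wm' \\ p>2}} \fr{\log p}{p-2}$ for brevity, the theorem yields
\[
\sum\lm_{\st{n \leq 2x \\ n \cg a \md{m}}} 2^{\Og(n)} = 2xK\bigl(\log(2x) + c + T\bigr) + O_{\eps}\!\left(\fr{x^{1/2+\eps}}{m^{1/4}}\right),
\]
\[
\sum\lm_{\st{n \leq x \\ n \cg a \md{m}}} 2^{\Og(n)} = xK\bigl(\log x + c + T\bigr) + O_{\eps}\!\left(\fr{x^{1/2+\eps}}{m^{1/4}}\right).
\]
The two error terms combine into one of the same shape (up to a harmless absolute constant absorbed into the implied constant).

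The main term of the difference simplifies as
\[
2xK\bigl(\log x + \log 2 + c + T\bigr) - xK\bigl(\log x + c + T\bigr) = xK\bigl(\log x + c + 2\log 2 + T\bigr),
\]
so that the constant $c$ in the asymptotic is simply replaced by $c' := c + 2\log 2$, which matches the statement of the corollary. No genuine obstacle arises here; the only mild point to check is that the error term on the interval $[x,2x]$ really is $O_\eps(x^{1/2+\eps}/m^{1/4})$ with the same $\eps$, which follows because $(2x)^{1/2+\eps} \ll x^{1/2+\eps}$ and any constants can be absorbed.
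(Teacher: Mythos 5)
Your proposal is correct and is exactly the intended argument: the paper presents the Corollary as an immediate consequence of the preceding Theorem with no further commentary, and the subtraction $\sum_{n\le 2x}-\sum_{n\le x}$ applied to the Theorem, together with the observation that $c'=c+2\log 2$ and that the error terms merge, is what "immediate" means here.
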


\section{The Modified Sieve}
We now use two-dimensional sieve weights to Selberg's approximation of the twin prime problem and reduce the value of $\lb$. We shall first provide asymptotic expressions for $S_1$ and $S_2$ in Proposition \ref{asymptote}. The later part of this section will be devoted to proving this Proposition. Once these proofs are completed, we make use of a sage program to optimise the sieve weights and determine a suitable value of $\lb$.
\medskip
Following Maynard\cite{JM}, we set 
$$D=\log\log\log{x} \quad \text{and} \quad W=\prod\lm_{p \leq D} p $$
This is done in order to avoid complications in our calculations. \\ \vskip 0.01in
Let $h$ be an even number and $v_0$ be chosen such that $(n,W)=(n+h,W)=1$. We consider the sum
\begin{equation} \sum\lm_{\st{n \sim x\\n \cg v_0 \md{W}}} \left( 1-\frac{2^{\Og(n)}+2^{\Og(n+h)}}{\lb} \right)\left(\sum\lm_{\st{d_1 \mid n \\ d_2 \mid n+h}} \lb_{d_1,d_2}\right)^2 =S_1 -S_2/\lb  \label{thesum}\end{equation}
where
\begin{equation}
\label{s11}
S_1=\sum\lm_{\st{n \sim x\\n \cg v_0 \md{W}}} \left( \sum\lm_{\st{d_1 \mid n \\d_2 \mid n+h}}\lb_{d_1,d_2} \right)^2
\end{equation}
and
\begin{equation}
\label{s22}
S_2=\sum\lm_{\st{n \sim x\\n \cg v_0 \md{W}}} \left( 2^{\Og(n)} + 2^{\Og(n+h)} \right) \left( \sum\lm_{\st{d_1 \mid n\\d_2 \mid n+h}}\lb_{d_1,d_2} \right)^2
\end{equation}
\bigskip
Let
\begin{equation}
\label{S(z)}
S(z)=\{(d_1,d_2):\mu^2(d_1d_2W)=1, \ \max\{d_1d_2^{2/3}, d_1^{2/3}d_2 \} \leq z\}
\end{equation}
\medskip
This is going to be the support of the sieve weights $\lb_{d_1,d_2}$. The reason for this choice will soon be clear. We will choose
\begin{equation}  \label{zchoice} z=x^{1/3-\eps} \end{equation}
\begin{ntn}
\label{ij}
Until the evaluations of $S_1$ and $S_2$ are complete, the set $\{i,j\}$ will be a permutation of the set $\{1,2\}$. So, if we write $s_{ij} \mid (d_i,l_j)$, it means that both the conditions $s_{12} \mid (d_1,l_2)$ and $s_{21} \mid (d_2,l_1)$ hold. Moreover, if we write $d_i \cg 0\md{l_i}$, it will mean that both the relations $d_1 \cg 0 \md{l_1}$ and $d_2 \cg 0 \md{l_2}$ hold.
\end{ntn}
Instead of making a choice for $\lb_{d_1,d_2}$ directly, we define the quantity
\begin{equation}
D_{r_1,r_2}=f(r_1)f(r_2) \sum\lm_{\st{d_i \cg 0 \md{r_i}}}\fr{\lb_{d_1,d_2}}{f(d_1)f(d_2)}
\end{equation}
and make a choice for $D_{r_1,r_2}$. Note that $D_{r_1,r_2}$ vanishes whenever $(r_1,r_2)$ is not in $S(z)$. \vskip 0.04in
Here, $f$ is defined multiplicatively by
\begin{equation} \label{ff1} f(p)=p/2 \end{equation} 
We set
\begin{equation}
 D_{r_1,r_2}=\begin{dcases} \mu(r_1)\mu(r_2)P\left( \fr{\log{r_1}}{\log z},\fr{\log r_2}{\log z} \right) & (r_1,r_2) \in S(z) \\ 0 & \text{otherwise} \end{dcases}
\end{equation}
where $P:T \to \mb{R}$ is a bounded and \textbf{symmetric} differentiable function, with $T$ as defined in (\ref{Ts1s2}). \vskip 0.02in
The reason for choosing $P$ to be symmetric is that we want the $\lb_{d_1,d_2}$'s to be symmetric and this is equivalent to the $D_{r_1,r_2}$'s being symmetric. Morevover, choosing $D_{r_1,r_2}$ is equivalent 
to choosing $\lb_{d_1,d_2}$ due to the relation
\begin{equation} 
\begin{split} 
\fr{\lb_{d_1,d_2}}{f(d_1)f(d_2)}&=\mu(d_1)\mu(d_2)\sum\lm_{l_i \cg 0\md{d_i}} \fr{\mu(l_1)\mu(l_2)}{f(l_1)f(l_2)}D_{l_1,l_2}\\
&=\mu(d_1)\mu(d_2)\sum\lm_{\st{l_i \cg 0\md{d_i} \\ (l_1,l_2) \in S(z)}} \fr{\mu^2(l_1)\mu^2(l_2)}{f(l_1)f(l_2)}P\left( \fr{\log l_1}{\log z}, \fr{\log l_2}{\log z} \right)
\label{rel}
\end{split}
\end{equation}
\medskip
\begin{ntn}
We set \begin{equation} B =\fr{\vp(W) \log{z}}{W} \label{normalize}\end{equation}
\end{ntn}
\medskip
The following Proposition gives us the asymptotic expressions for $S_1$ and $S_2$ in terms of the function $P$.
\begin{prop}
\label{asymptote}
Let $\lb_{d_1,d_2}$ be as in (\ref{rel}) and let $\eta$ be as defined in Definition \ref{etadef}. Then with $z=x^{1/3-\eps}$
\begin{equation*}
\begin{split}
S_1 &= \fr{x}{W}  (1+o(1)) B^6 R_1(P)\\
S_2 &= \fr{2x}{W} (1+o(1)) B^6 R_2(P) 
\end{split}
\end{equation*}
where 
\begin{equation}
\begin{split}
\label{R1R2}
R_1(P) &= \iint\lm_{T \ \ } Q_1^2(s_1,s_2) \, ds_2 \, ds_1 \\
R_2(P) &= \iint\lm_{T \ \ } (s_1(3-s_1) Q_2^2(s_1,s_2) + 4 s_1 Q_1(s_1,s_2) Q_2(s_1,s_2)  ) \, ds_2 \, ds_1
\end{split}
\end{equation}
Here
\begin{equation}
\label{Q1Q2}
Q_1(s_1,s_2) = \iint\lm_{T_{s_1,s_2} \ } P(t_1,t_2) \, dt_2 \, dt_1  \quad \text{and} \quad Q_2(s_1,s_2) = \int\lm_{s_2}^{\eta(s_1)} P(s_1,t_2) \, dt_2
\end{equation}
\end{prop}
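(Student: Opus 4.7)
The plan is to expand the squares defining $S_1$ and $S_2$ into quadruple sums over divisors, evaluate the resulting inner sum in an arithmetic progression, substitute the relation (\ref{rel}) expressing $\lambda_{d_1,d_2}$ via $P$, and then interchange the order of summation so that Corollary \ref{sumS} can be applied to the outer sum in $(l_1,m_1,l_2,m_2)$. Throughout, the symmetry of $P$ (hence of $\lambda_{d_1,d_2}$) is exploited, in particular to combine the $2^{\Og(n)}$ and $2^{\Og(n+h)}$ contributions in $S_2$ and produce the overall prefactor $2$.

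Expanding $S_1$ yields
\[
S_1 \;=\; \sum_{(d_1,d_2),(e_1,e_2)\in S(z)} \lambda_{d_1,d_2}\lambda_{e_1,e_2}\,\#\{n\sim x : n\equiv v_0\md{W},\ [d_1,e_1]\mid n,\ [d_2,e_2]\mid n+h\}.
\]
Any prime $p$ dividing both $[d_1,e_1]$ and $[d_2,e_2]$ would divide $\gcd(n,n+h)\mid h$; but the support condition $\mu^2(d_1 d_2 W)=1$ forces $p>D$, while for $x$ large every prime factor of the fixed $h$ is at most $D$. Hence the moduli $W,[d_1,e_1],[d_2,e_2]$ are pairwise coprime and CRT gives the inner count as $x/(W[d_1,e_1][d_2,e_2])+O(1)$. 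For $S_2$ the inner sum is $\sum_n (2^{\Og(n)}+2^{\Og(n+h)})$ over the same progression; swapping $(d_1,e_1)\leftrightarrow(d_2,e_2)$ in the second summand (allowed by symmetry of $\lambda$) reduces it to twice the first, and Corollary \ref{2^Og} then evaluates the $2^\Og$-sum with leading factor $\log x + c' + 2\sum_{p\mid W m'}\log p/(p-2)$.

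Substituting (\ref{rel}) for both $\lambda$'s and interchanging orders, the inner sum over $(d_1,e_1)$ at fixed $(l_1,m_1)$ reduces to $\sum_{d_1\mid l_1,\,e_1\mid m_1} \mu(d_1)\mu(e_1)f(d_1)f(e_1)/[d_1,e_1]$, which is multiplicative and evaluates prime-by-prime in terms of $f$ and $f_1 = f * \mu$ from the table; the $(d_2,e_2)$ sum is analogous. The remaining outer sum over $(l_1,m_1,l_2,m_2)$ satisfies the hypotheses of Corollary \ref{sumS} and converts each sum against $P(\log l/\log z,\log l'/\log z)$ into a double integral over $T_{s_1,s_2}$; the function $Q_1(s_1,s_2)$ in (\ref{Q1Q2}) arises when both inner integrations survive, while $Q_2(s_1,s_2)$ arises when one integration collapses to the boundary $l_i = d_i$ of $S(z)$. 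For $S_1$ this yields $Q_1^2$ directly. For $S_2$ the extra logarithmic factor from Corollary \ref{2^Og} splits, via $\log x = \log(x/d_1) + \log d_1$ and Corollary \ref{logp/logz} applied to the prime-sum correction, into pieces that assemble precisely into the $s_1(3-s_1)Q_2^2 + 4s_1 Q_1 Q_2$ combination in (\ref{R1R2}). The overall factor $B^6$ arises from six effective one-dimensional $\Og_k$ factors accumulated across both copies of the square.

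The main obstacle is the detailed bookkeeping in the inner multiplicative computation and, for $S_2$, the precise algebraic identification of how the $\log x$, $\log d_i$, and prime-sum contributions from Corollary \ref{2^Og} combine to yield the coefficients $s_1(3-s_1)$ and $4s_1$. A secondary but nontrivial check is that all error terms --- the $O(1)$ from CRT multiplied by $\sum|\lambda_{d_1,d_2}|$, and the $O(x^{1/2+\eps}/m^{1/4})$ from Corollary \ref{2^Og} summed over $(d_i,e_i)\in S(z)$ --- remain of smaller order than the main term $xB^6/W$; this is what motivates the choice $z = x^{1/3-\eps}$ in (\ref{zchoice}).
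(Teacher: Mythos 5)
Your plan captures the right opening moves — expand the square, use $\mathrm{rad}(h)\mid W$ to get pairwise coprimality and a clean CRT count, exploit the symmetry of $\lambda$ to produce the factor $2$, and invoke Corollary~\ref{2^Og} for the $2^{\Omega}$ weight in $S_2$. These all agree with the paper. But the central computation is where the proposal breaks down.

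The claimed factorization of the inner sum over $(d_1,d_2,e_1,e_2)$ into a product of $\sum_{d_1\mid l_1,\,e_1\mid m_1}\mu(d_1)\mu(e_1)f(d_1)f(e_1)/[d_1,e_1]$ and the analogous $(d_2,e_2)$ sum is not valid as stated, because you have dropped the cross-coprimality constraints $(d_1,e_2)=(d_2,e_1)=1$. After substituting~(\ref{rel}), the integers $l_1,l_2,m_1,m_2$ become free summation variables (subject only to the support conditions), so these constraints no longer come for free and genuinely couple the two alleged factors. The paper deals with this by writing $1/[d_i,l_i]=(d_il_i)^{-1}\sum_{r_i\mid(d_i,l_i)}\vp(r_i)$ and inserting the M\"obius factor $\sum_{s_{ij}\mid(d_i,l_j)}\mu(s_{12})\mu(s_{21})$ to remove $(d_1,l_2)=(d_2,l_1)=1$; the resulting $s_{ij}$ must then be shown to contribute only through $s_{ij}=1$ (with the $s_{ij}>D$ tail absorbed into the error using the $W$-trick). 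Your sketch omits all of this. Concretely, it is this decomposition that produces the paper's auxiliary quantities $B_{r_1,r_2}$ and $C_{r_1,r_2}$ (equation~(\ref{BCr1r2})), whose asymptotics (Lemma~\ref{LBCr1r2}) are what convert the problem into exactly the double sums that Corollary~\ref{sumS} handles.

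For $S_2$ the gap is worse: you state that the logarithmic and prime-sum contributions from Corollary~\ref{2^Og} ``assemble precisely'' into $s_1(3-s_1)Q_2^2 + 4s_1Q_1Q_2$, but this is the entire content of the calculation, not a routine verification. The paper splits $S_2$ into $M_2-M_{21}+M_{22}+E_2$ (see~(\ref{identities})), proves Lemma~\ref{add} and Proposition~\ref{useful} to handle the additive weight $L[d_1,l_1]=L(d_1)+L(l_1)-L(d_1,l_1)$, evaluates $M_{21}^{(1)}$ and $M_{21}^{(2)}$ separately (the latter is where Corollary~\ref{logp/logz} and $Q_1$ actually enter, via the prime sum over $C_{pr_1,r_2}$), shows $M_{22}$ is $O(\log^6 z/D)$, and only then combines $\log x\approx 3\log z$ with the $-\log[d_1,l_1]$ term to produce $3-s_1$. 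None of this machinery appears in your proposal, and without Proposition~\ref{useful} there is no route from the $\log[d_1,l_1]$ and $\sum_{p\mid W[d_2,l_2]}\log p/(p-2)$ terms to the quantities $Q_1$ and $Q_2$. You should not present this as bookkeeping to be filled in; it is the argument.
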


The remainder of this section will be devoted to proving Proposition \ref{asymptote}. Before we begin, we define two new quantities which will arise when computing $S_1$ and $S_2$. Let
\begin{equation}
\begin{split}
\label{BCr1r2}
B_{r_1,r_2} &= \vp(r_1) \vp(r_2) \sum\lm_{d_i \cg 0 \md{r_i}} \fr{\lb_{d_1,d_2}}{d_1 d_2}\\
C_{r_1,r_2} &= f_1(r_1) g_1(r_2) \sum\lm_{d_i \cg 0 \md{r_i}} \fr{\lb_{d_1,d_2}}{f(d_1) g(d_2)}
\end{split} 
\end{equation}
where $f=f_1*1$ is as defined in (\ref{ff1}) and $g$, $g_1$ are defined by
\begin{equation}
\label{gg1}
g(p)=\fr{p(p-1)}{p-2} \quad \text{and} \quad g_1*1=g
\end{equation}
\medskip
\begin{lem}
\label{LBCr1r2}
The following relations hold for any $(r_1,r_2) \in S(z)$
\begin{itemize}
\item[a)] $ \ds |\lb_{r_1,r_2}| \ll \log^4 z$ 
\item[b)] $ \ds B_{r_1,r_2}=\kp_1(W) B^2  \mu(r_1)\mu(r_2)h(r_1)h(r_2) \ Q_1\left( \fr{\log r_1}{\log z},\fr{\log r_2}{\log z} \right) + O(\log z)$ \\
where $Q_1$ is defined as in (\ref{Q1Q2}). Here 
$$\kp_1(W)=\prod\lm_{p \nmid W}\left(1-\fr{1}{p}\right)^2 \left(1+\fr{2}{p}\right) \quad \text{and} \quad h(p)=1-3/(p+2)$$ 
\item[c)] $ \ds C_{r_1,r_2}= B \ \mu(r_1)\mu(r_2)h_1(r_1)h_2(r_2) \ Q_2\left( \fr{\log r_1}{\log z},\fr{\log r_2}{\log z} \right) + O(1)$ \\
where $Q_2$ is as defined in (\ref{Q1Q2}). Here
$$h_1(p)=1-3/p+2/p^2 \quad \text{and} \quad h_2(p)=1-2/p+2/p^2$$
\end{itemize}
\end{lem}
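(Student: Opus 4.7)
For part (a), I would substitute the definition of $\lambda_{d_1,d_2}$ from (\ref{rel}), take absolute values, and use the boundedness of $P$. Writing $l_i = d_i m_i$ with $(m_i, d_i) = 1$, the factorisation $f(l_i) = f(d_i) f(m_i)$ and the identity $1/f(n) = 2^{\Og(n)}/n$ for squarefree $n$ produce a cancellation: the prefactor $f(d_1)f(d_2)$ combines with $1/(f(d_1) f(d_2) \cdot 2^{\Og(d_1)+\Og(d_2)})$ to give a quantity of size $O(1)$ in $d_1,d_2$. The remaining sum factors as a product of two one-variable sums of the form $\sum_{m \leq z} \mu^2(m) 2^{\Og(m)}/m$. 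Since $2^{\Og(m)} = \tau(m)$ on squarefree integers, each such sum is $O(\log^2 z)$ by the classical estimate $\sum_{m \leq z} \tau(m)/m \ll \log^2 z$, yielding $|\lb_{d_1,d_2}| \ll \log^4 z$.

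For part (b), after substituting (\ref{rel}) into the definition of $B_{r_1,r_2}$ and interchanging the $d$- and $l$-summations, the inner sum over $d_i$ with $r_i \mid d_i \mid l_i$ evaluates (for each index) to $\mu(r_i)\,(f(r_i)/r_i)\,\prod_{p\mid l_i/r_i}(1 - f(p)/p)$. Using $f(p)/p = 1/2$, this collapses cleanly with the factor $\mu^2(l_i)/f(l_i) = \mu^2(l_i) 2^{\Og(l_i)}/l_i$ so that all powers of $2$ disappear, leaving
\[
B_{r_1,r_2} = \vp(r_1)\vp(r_2)\mu(r_1)\mu(r_2)\sum_{\st{l_i \cg 0\md{r_i}\\(l_1,l_2)\in S(z)}} \fr{\mu^2(l_1)\mu^2(l_2)}{l_1 l_2}\,P\!\left(\fr{\log l_1}{\log z},\fr{\log l_2}{\log z}\right).
\]
Now I apply Corollary \ref{sumS} with $f_1(n)=f_2(n)=\mu^2(n)/n \in \Og_1$ (the Dirichlet series is $\zeta(1+s)/\zeta(2+2s)$). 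At a prime $p$ one computes $\overline{f}(p) = 1 + 2/p$, so $f_1(p)/\overline{f}(p) = 1/(p+2)$ and $c(W,f) = \prod_{p\nmid W}(1-1/p)^2(1+2/p) = \kp_1(W)$. Since $k_1=k_2=1$ the kernel $(t_i-s_i)^{k_i-1}/(k_i-1)!$ is $1$, so the double integral is exactly $Q_1(s_1,s_2)$. Combining $\vp(r_i)/(p+2)$ into $h(p)=(p-1)/(p+2)$ yields the claimed main term, and the error is controlled by the $O(\log z)$ coming from Corollary \ref{sumS}.

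Part (c) follows the same template but with a crucial simplification. After substituting (\ref{rel}) into the definition of $C_{r_1,r_2}$ and interchanging summations, the inner $d_1$-sum becomes $\sum_{r_1\mid d_1\mid l_1}\mu(d_1) = \mu(r_1)[l_1=r_1]$, which \emph{forces} $l_1 = r_1$ and reduces the double sum to a one-dimensional sum over $l_2$. The $d_2$-sum produces $\mu(r_2)(f(r_2)/g(r_2))\prod_{p\mid l_2/r_2}(1-f(p)/g(p))$; the identity $1 - f(p)/g(p) = p/(2(p-1))$ combined with $1/f(p) = 2/p$ yields $\prod_{p\mid m_2}1/(p-1) = 1/\vp(m_2)$ after writing $l_2 = r_2 m_2$. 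The resulting sum is
\[
\sum_{\st{m_2 \leq z^{\eta(s_1)-s_2}\\(m_2,r_1r_2W)=1}} \fr{\mu^2(m_2)}{\vp(m_2)}\,P\!\left(s_1,\,s_2 + \fr{\log m_2}{\log z}\right),
\]
to which Theorem \ref{estcong} applies with $\mu^2/\vp \in \Og_1$. The relevant constant is $c(r_1r_2 W,\mu^2/\vp) = \prod_{p\nmid r_1r_2W}(1-1/p)\cdot p/(p-1) = 1$, so the sum equals $B\cdot (\vp(r_1)\vp(r_2)/(r_1 r_2))\,Q_2(s_1,s_2) + O(1)$. Multiplying by the prefactor $f_1(r_1) g_1(r_2)/(f(r_1)g(r_2))$ and verifying the identities $f_1(p)\vp(p)/(f(p)p) = (p-1)(p-2)/p^2 = h_1(p)$ and $g_1(p)\vp(p)/(g(p)p) = (p^2-2p+2)/p^2 = h_2(p)$ delivers the stated formula. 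Since both $f_1/f$ and $g_1/g$ are bounded by $1$ on primes $p\geq 3$ (and $W$ is coprime to $r_1 r_2$), the error remains $O(1)$.

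The hardest step is the arithmetic bookkeeping in part (c): one must track how the ratio $f_1(r_1)g_1(r_2)/(f(r_1)g(r_2))$, the collapse $l_1 = r_1$, and the $d_2$-sum conspire through repeated applications of $1/f(p)=2/p$ and $1-f(p)/g(p)=p/(2(p-1))$ to produce exactly the functions $h_1$ and $h_2$ given in the statement. Once this algebraic reduction is in hand, the analytic input is just one application of Theorem \ref{estcong} (respectively Corollary \ref{sumS} in part (b)); no new estimates beyond Section 1 are needed.
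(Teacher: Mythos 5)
Your proposal is correct and follows essentially the same route as the paper: substitute (\ref{rel}), interchange the $d$- and $l$-summations, evaluate the resulting divisor sums over $r_i \mid d_i \mid l_i$ via the local identities $f(p)/p = 1/2$ and $1 - f(p)/g(p) = p/(2(p-1))$, and then apply Corollary \ref{sumS} (parts (a),(b)) or Theorem \ref{estcong} (part (c)), matching the constants $c(W,\cdot)$, $f_1/\ov{f}$, $h$, $h_1$, $h_2$ exactly as the paper does. Your two small deviations — bounding part (a) directly via $\sum_{m\le z}\tau(m)/m\ll\log^2 z$ rather than through Corollary \ref{sumS} with $\mu^2/f\in\Og_2$, and in part (c) reparametrising $l_2 = r_2 m_2$ and invoking Theorem \ref{estcong} with $d=1$ (so the prefactor $f_1 g_1/(fg)$ is visibly $O(1)$) rather than with $d=r_2$ — are cosmetic rearrangements of the same computation, and if anything the $d=1$ parametrisation in (c) makes the $O(1)$ error claim more transparent.
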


\begin{proof}\text{ }
\begin{itemize}
\item[a)] From (\ref{rel}), we have
$$ \lb_{r_1,r_2}=\mu(r_1)\mu(r_2)f(r_1)f(r_2)\sum\lm_{\st{l_i \cg 0(r_i) \\ (l_1,l_2) \in S(z)}} \fr{\mu^2(l_1)\mu^2(l_2)}{f(l_1)f(l_2)}P\left( \fr{\log l_1}{\log z}, \fr{\log l_2}{\log z} \right) $$
Applying Corollary \ref{sumS} to the RHS of the above expression and writing $s_i=\fr{\log r_i}{\log z}$, we obtain
\begin{equation*}
\begin{split}
\lb_{r_1,r_2}&=c(W,f*f) B^4  \fr{\mu(r_1)\mu(r_2)}{\prod\lm_{p \mid r_1 r_2}(1+4/p)} \iint\lm_{T_{s_1,s_2}} P(t_1,t_2) (t_1-s_1)(t_2-s_2) \, dt_2 \, dt_1 + O(\log^3 z)\\
& \ll \log^4 z
\end{split}
\end{equation*}
Here $c(W,f*f)$ is as defined in (\ref{cmf}) and tends to $1$ as $x \to \infty$.

\item[b)]
From (\ref{BCr1r2}) and (\ref{rel}), we have
\begin{equation*}
\begin{split}
\fr{B_{r_1,r_2}}{\vp(r_1)\vp(r_2)} &= \sum\lm_{d_i \cg 0 \md{r_i}}\fr{\lb_{d_1,d_2}}{d_1d_2}=\sum\lm_{d_i \cg 0 \md{r_i}} \fr{\mu(d_1)\mu(d_2)}{\tau(d_1)\tau(d_2)} \sum\lm_{\st{l_i \cg 0 \md{d_i}\\(l_1,l_2) \in S(z)}} 
\fr{\mu^2(l_1) \mu^2(l_2)}{f(l_1)f(l_2)} P\left( \fr{\log l_1}{\log z}, \fr{\log l_2}{\log z} \right)\\
&= \sum\lm_{\st{l_i \cg 0 \md{r_i}\\(l_1,l_2) \in S(z)}}\fr{\mu^2(l_1) \mu^2(l_2)}{f(l_1)f(l_2)} P\left( \fr{\log l_1}{\log z}, \fr{\log l_2}{\log z} \right) 
\sum\lm_{r_i \mid d_i \mid l_i} \fr{\mu(d_1)\mu(d_2)}{\tau(d_1)\tau(d_2)}\\
&= \mu(r_1)\mu(r_2) \sum\lm_{\st{l_i \cg 0 \md{r_i}\\(l_1,l_2) \in S(z)}} \fr{\mu^2(l_1) \mu^2(l_2)}{l_1 l_2} P\left( \fr{\log l_1}{\log z}, \fr{\log l_2}{\log z} \right)
\end{split}
\end{equation*}
Applying Corollary \ref{sumS} to the last equality above functions $\mu^2/ id , \ \mu^2/id \in \Og_1$ , we obtain the desired result.
\item[c)] Again from (\ref{BCr1r2}) and (\ref{rel}), we have
\begin{equation}
\begin{split}
\label{Ctemp}
\fr{C_{r_1,r_2}}{f_1(r_1)g_1(r_2)} &= \sum\lm_{d_i \cg 0 \md{r_i}} \fr{\lb_{d_1,d_2}}{f(d_1)g(d_2)}\\
&= \sum\lm_{d_i \cg 0 \md{r_i}} \mu(d_1)\fr{\mu(d_2)f(d_2)}{g(d_2)} 
\sum\lm_{\st{l_i \cg 0\md{d_i}\\(l_1,l_2) \in S(z)}} \fr{\mu^2(l_1) \mu^2(l_2)}{l_1l_2}P\left( \fr{\log l_1}{\log z},\fr{\log l_2}{\log z} \right)\\
&=\sum\lm_{\st{l_i \cg 0\md{r_i}\\(l_1,l_2) \in S(z)}} \fr{\mu^2(l_1) \mu^2(l_2)}{l_1l_2}P\left( \fr{\log l_1}{\log z},\fr{\log l_2}{\log z} \right) \sum\lm_{r_i \mid d_i \mid l_i} \mu(d_1)\fr{\mu(d_2)f(d_2)}{g(d_2)}\\
&= \fr{\mu(r_1)}{f(r_1)} \fr{\mu(r_2) \vp(r_2)}{g(r_2)} \sum\lm_{\st{l_2 \cg 0 \md{r_2}\\ (r_1,l_2) \in S(z)} } \fr{\mu^2(l_2)}{\vp(l_2)}
\end{split}
\end{equation}
In the last equality in (\ref{Ctemp}), we note from Lemma \ref{etacn} that 
$$(r_1,l_2) \in S(z) \iff \mu^2(r_1l_2W)=1 \quad \text{and} \quad l_2 \leq z^{\eta\left( \fr{\log r_1}{\log z} \right)}$$ 
By applying Theorem \ref{estcong} to the sum in the last equality of (\ref{Ctemp}) with the function $\mu^2/\vp \in \Og_1$, we shall obtain the desired result.
\end{itemize}
\end{proof}

\bigskip
We now begin with the evaluation of $S_1$.
\subsection{Evaluation of \tps{$S_1$}{}}
From (\ref{s11}), we have
\begin{equation}
\label{1st}
\begin{split}
S_1 &=\sum\lm_{\st{n \sim x\\ n \cg v_0 \md{W}}} \left( \sum\lm_{\st{d_1 \mid n\\d_2 \mid n+h}}\lb_{d_1,d_2} \right)^2= \sum\lm_{\st{n \sim x\\n \cg v_0 \md{W}}}\sum\lm_{\st{d_1,l_1 \mid n \\ d_2,l_2 \mid n+h}} \lb_{d_1,d_2} \lb_{l_1,l_2}\\
&=\sum\lm_{\st{(d_i,l_j)=1}} \lb_{d_1,d_2} \lb_{l_1,l_2} \sum\lm_{\st{n \sim x \\ n \cg 0 \md{[d_1,l_1]} \\ n \cg -h \md{[d_2,l_2]}\\n \cg v_0 \md{W}}}1
\end{split}
\end{equation}
In the above sum, we have the conditions $d_1,l_1 \mid n$ and $d_2,l_2 \mid n+h$. We first choose $x$ large enough so that $D>h$. This ensures that $rad(h) \mid W$. It then follows that $(d_i,d_j)=(d_i,l_j)=1$ for $i \neq j$ and that $(d_1d_2l_1l_2,W)=1$. 
This is because if there is a prime $p$ dividing $(d_i,d_j)$ ( or $(d_i,l_j)$) for $i \neq j$, then $p$ must divide both $n$ and $n+h$ and therefore $p \mid h$. But since $rad(h) \mid W$, it follows that $p \mid W$. This is a contradiction because the numbers $d_i$ and $l_j$ are all coprime to $W$. Moreover, the conditions $(d_1,d_2)=1$ and $(l_1,l_2)=1$ can be dropped because they are already included in the definition of $S(z)$ (See (\ref{S(z)})), the support of the sieve weights $\lb_{d_1,d_2}$. So, we are only left with the conditions $(d_1,l_2)=(d_2,l_1)=1$.
Since the numbers $[d_1,l_1]$, $[d_2,l_2]$ and $W$ are pairwise coprime, the inner sum in the last equality of (\ref{1st}) becomes 
$$\fr{x}{W[d_1,l_1][d_2,l_2]} + O(1)$$
Therefore,
\begin{equation}
\label{mainabove}
\begin{split}
S_1&=\sum\lm_{\st{(d_i,l_j)=1}} \lb_{d_1,d_2} \lb_{l_1,l_2} \left( \frac{x}{W[d_1,l_1][d_2,l_2]} +O(1) \right)= \fr{x}{W}\sum\lm_{\st{(d_i,l_j)=1}} \frac{\lb_{d_1,d_2}\lb_{l_1,l_2}}{[d_1,l_1][d_2,l_2]}+ 
O\left(\sum\lm_{\st{d_i,l_i}}|\lb_{d_1,d_2}||\lb_{l_1,l_2}|\right)\\
&=M_1+E_{11}\\
\end{split}
\end{equation}
The error term in (\ref{mainabove}) is
\begin{equation} 
\label{E11} 
E_{11} \ll \left(\sum\lm_{d_1,d_2}|\lb_{d_1,d_2}|\right)^2 \ll \left( \sum\lm_{(d_1,d_2) \in S(z)} \log^4 z \right)^2
\ll |S(z)|^2 \log^8 z \ll z^{8/3} \log^8 z
\end{equation}
where we have used the bound $|\lb_{d_1,d_2}| \ll \log^4 z$ from Lemma \ref{LBCr1r2} and the fact that $|S(z)| \ll z^{4/3}$.\\ \vskip 0.02in
In the main term $M_1$, we can write $$ \fr{1}{[d_i,l_i]}=\fr{1}{d_il_i}\sum\lm_{r_i \mid (d_i,l_i)}\vp(r_i)$$
This gives
$$ M_1=\fr{x}{W}\sum\lm_{\st{(d_i,l_j)=1}}
\fr{\lb_{d_1,d_2}\lb_{l_1,l_2}}{d_1d_2 \ l_1l_2} \sum\lm_{\st{r_i \mid (d_i,l_i)}} \vp(r_1)\vp(r_2)$$
To get rid of the conditions $(d_1,l_2)=(d_2,l_1)=1$, we multiply a factor of 
$$\sum\lm_{\st{s_{ij} \mid (d_i,l_j)}} \mu(s_{12})\mu(s_{21})$$
This idea was used by Maynard in his preprint \cite{JM}. \vskip 0.01in
Now, $r_i \mid (d_i,l_i)$ and $s_{ij} \mid (d_i,l_j)$, for $i \neq j$. This implies that $(r_i,s_{ij})=(r_i,s_{ji})=1$ because if there was prime $p$ dividing $r_i$ and $s_{ij}$, then $p \mid (l_i,l_j)$. This is contradiction since $(l_1,l_2)=1$. Also note that both $r_i$ and $s_{ij}$ divide $d_i$. Since $(r_i,s_{ij})=1$, it follows that $r_i s_{ij} \mid d_i$. Similarly, we get $r_j s_{ji} \mid d_j$. Therefore it follows that both $(r_1s_{12},r_2s_{21})$ and $(r_1s_{21},r_2s_{12})$ are in $S(z)$. Summarising this, we have
\begin{equation}
\label{m1}
\begin{split}
M_1&= \fr{x}{W} \sum\lm_{r_i} \vp(r_1) \vp(r_2)\sum\lm_{\st{d_i,l_i \cg 0(r_i)}} \frac{\lb_{d_1,d_2}}{d_1d_2}\frac{\lb_{l_1,l_2}}{l_1l_2} \sum\lm_{\st{s_{ij} \mid (d_i,l_j)}}\mu(s_{12})\mu(s_{21})\\
&=\fr{x}{W}\sum\lm_{r_i} \vp(r_1) \vp(r_2)\sum\lm_{s_{ij}} \mu(s_{12})\mu(s_{21}) 
\left( \sum\lm_{\st{d_i \cg 0(r_is_{ij})}} \frac{\lb_{d_1,d_2}}{d_1d_2} \right)  
\left( \sum\lm_{\st{l_i \cg 0(r_is_{ji})}} \frac{\lb_{l_1,l_2}}{l_1l_2} \right)
\end{split}
\end{equation}
\begin{rem}
\label{neednot}
We need not write  the conditions $(r_1s_{12},r_2s_{21}) \in S(z)$ and $(r_1s_{21},r_2s_{12}) \in S(z)$ at every step because 
one of the two bracketed quantities in the last equality of (\ref{m1}) will vanish when any of these conditions do not hold (This is because the support of $\lb_{d_1,d_2}$ is $S(z)$). 
\end{rem}
Recalling the Definition of $B_{r_1,r_2}$ from (\ref{BCr1r2}), note that the two bracketed quantities in the last equality of (\ref{m1}) can be replaced by appropriate $B_{r_1,r_2}$'s i.e we obtain
\begin{equation}
\label{s1}
M_1=\fr{x}{W} \sum\lm_{r_i} \fr{\mu^2(r_1)}{\vp(r_1)} \fr{\mu^2(r_2)}{\vp(r_2)}
\sum\lm_{s_{ij}} \fr{\mu(s_{12})}{\vp^2(s_{12})}\fr{\mu(s_{21})}{\vp^2(s_{21})} B_{r_1s_{12},r_2s_{21}}B_{r_1 s_{21},r_2 s_{12}}
\end{equation} \\ \vskip 0.02in

We would now like to get rid of the numbers $s_{12}$ and $s_{21}$ from the above summation. Since $(s_{12},s_{21}) \in S(z)$, we have $(s_{12},W)=(s_{21},W)=1$. Therefore, either $s_{ij}=1$ or $s_{ij}>D$. The contribution to the main term $M_1$ of the above sum (\ref{s1}) when atleast one of $s_{ij}$ is greater than $D$ is 
\begin{equation*}
\begin{split}
 & \ll \fr{x}{W} \sum\lm_{\st{(s_{12},s_{21}) \in S(z)\\ s_{12}>D}}\fr{1}{\vp^2(s_{12})\vp^2(s_{21})}
\sum\lm_{\st{ (r_1,r_2) \in S(z)}} \fr{\mu^2(r_1)}{\vp(r_1)} \fr{\mu^2(r_1)}{\vp(r_2)} \log^4{z} \\
& \ll \fr{x \log^4 z}{W} \left( \sum\lm_{s_{12>D}} \fr{1}{\vp^2(s_{12})} \right) \left( \sum\lm_{s_{21}} \fr{1}{\vp^2(s_{21})} \right) 
\left( \sum\lm_{r_1, r_2 \leq z} \fr{1}{\vp(r)\vp(r_2)} \right) \ll \fr{x \log^6 z}{WD}
\end{split}
\end{equation*}
where we have used the estimate $B_{r_1,r_2} \ll \log^4 z$ from Lemma \ref{LBCr1r2}. \vskip 0.02in
We may therefore assume that $s_{12}=s_{21}=1$ with a cost of error term of 
\begin{equation} \label{E12} E_{12}=O \left( \fr{x\log^6{z}}{WD} \right) \end{equation} 
Therefore, one can now write
\begin{equation}
\label{s1temp}
S_1=\fr{x}{W}\sum\lm_{r_i}\fr{\mu^2(r_1)}{\vp(r_1)} \fr{\mu^2(r_2)}{\vp(r_2)} B^2_{r_1,r_2} + E_{11}+E_{12}
\end{equation}
Substituting the value of $B_{r_1,r_2}$ from Lemma $\ref{LBCr1r2}$, we obtain
\begin{equation}
\label{kpB}
M_1= \fr{x}{W}  \sum\lm_{\st{(r_1,r_2) \in S(z)}} \fr{\mu^2(r_1)}{\vp(r_1)}\fr{\mu^2(r_2)}{\vp(r_2)}
\left( \kp_1^2(W)  B^4 h^2(r_1)h^2(r_2) Q_1^2\left( \fr{\log{r_1}}{\log{z}},\fr{\log{r_2}}{\log{z}} \right) + O(\log^3 z) \right)
\end{equation}
Note that the inner sum in (\ref{kpB}) is precisely the type of sum estimated in Corollary \ref{sumS}. Applying Corollary \ref{sumS} with the functions $h^2/\vp, \ h^2/\vp \in \Og_1$ ( See Definition \ref{omegak}), we obtain 
\begin{equation}
M_1=\fr{x}{W} B^6 \iint\lm_{T \ \ }Q_1^2(s_1,s_2) \,ds_2 \,ds_1 + O\left( \fr{x \log^5 z}{W} \right)
\end{equation}
with $Q_1$ defined as in (\ref{Q1Q2}). \vskip 0.01in

When we choose $z=x^{1/3-\eps}$, we have 
$$E_{11} \ll x^{8/9-8 /3 \eps} \quad \text{and} \quad E_{12} \ll  \fr{x \log^6 z}{WD}$$
This gives the following final expression for $S_1$
\begin{equation}
\label{S1}
S_1=\fr{x}{W}(1+o(1))B^6 \ R_1(P)
\end{equation}
where 
\begin{equation}
\label{R1}
R_1(P)= \iint\lm_{T \ \ } Q_1^2(s_1,s_2) \,ds_2 \,ds_1 
\end{equation}

\bigskip
\subsection{Evaluation of \tps{$S_2$}{}}
We will use the same ideas here as we did while computing $S_1$. The conditions and observations given in the paragraph right after equation (\ref{1st}) will be implemented. \vskip 0.01in
From (\ref{s22}), we have 
\begin{equation}
\label{s2tent}
\begin{split}
S_2 &=\sum\lm_{\st{n \sim x\\n \cg v_0 \md{W}}} (2^{\Og(n)}+2^{\Og(n+h)}) \sum\lm_{\st{d_1,l_1 \mid n \\ d_2,l_2 \mid n+h}} 
\lb_{d_1,d_2} \lb_{l_1,l_2}=\sum\lm_{\st{(d_i,l_j)=1}} \lb_{d_1,d_2} \lb_{l_1,l_2}
\sum\lm_{\st{n \sim x \\ n \cg 0 \md{[d_1,l_1]} \\ n \cg -h \md{[d_2,l_2]}\\n \cg v_0\md{W}}} (2^{\Og(n)}+2^{\Og(n+h)}) \\
&=\sum\lm_{\st{(d_i,l_j)=1}} \lb_{d_1,d_2} \lb_{l_1,l_2}\tau[d_1,l_1]\sum\lm_{\st{n \sim \fr{x}{[d_1,l_1]}\\ n \cg \al' \md{[d_2,l_2]}\\ n \cg \bt' \md{W}}} 2^{\Og(n)} \quad + 
\quad \sum\lm_{\st{(d_i,l_j)=1}} \lb_{d_1,d_2} \lb_{l_1,l_2}\tau[d_2,l_2]
\sum\lm_{\st{n \sim \fr{x}{[d_2,l_2]} \\ n \cg \al'' \md{[d_1,l_1]} \\n \cg \bt'' \md{W}}} 2^{\Og(n)} \\
\end{split}
\end{equation}
Here, we have taken out $[d_1,l_1]$ and $[d_2,l_2]$ respectively from the 1st and 2nd term in the last equality of \ref{s2tent} and replaced the summation over $n+h$ with a summation over $n$. This is permitted because $h$ is very small compared to $x$. Note that therefore $\al'$ is coprime to $[d_2,l_2]$, $\al''$ is coprime to $[d_1,l_1]$ and $\bt',\bt''$ are both coprime to $W$. \\ \vskip 0.01in 
We assume that $\lb_{d_1,d_2}$'s are \textbf{symmetric} i.e $\lb_{d_1,d_2}=\lb_{d_2,d_1}$ for any $d_1,d_2$. This will allow us to interchange the indices $1$ and $2$ in the second term in the last equality of (\ref{s2tent}) to get
$$ S_2=\sum\lm_{\st{(d_i,l_j)=1}} \lb_{d_1,d_2} \lb_{l_1,l_2}\tau[d_1,l_1] 
\left( \sum\lm_{\st{n \sim \fr{x}{[d_1,l_1]}\\ n \cg \al' \md{[d_2,l_2]}\\ n \cg \bt' \md{W}}} 2^{\Og(n)} + \sum\lm_{\st{n \sim \fr{x}{[d_1,l_1]} \\ n \cg \al'' \md{[d_2,l_2]} \\n \cg \bt'' \md{W}}} 2^{\Og(n)}\right) $$

We make use of Corollary $\ref{2^Og}$ with $m=W[d_2,l_2]$ to evaluate the inner sums of the above expression. In order to apply the Corollary, the following conditions must hold
$$ W[d_2,l_2] \leq \left( \fr{x}{[d_1,l_1]} \right)^{2/3-\eps} \quad \text{and} \quad W[d_1,l_1] \leq \left( \fr{x}{[d_2,l_2]} \right)^{2/3-\eps} $$ 
Here, the second condition above holds because we have assumed $\lb_{d_1,d_2}$'s to be symmetric. So, their support is also symmetric in the indices $1$ and $2$. Since $W$ is very small as compared to $x$, it can be swallowed into the $x^{\eps}$ term. 
So it is enough to have
\begin{equation} \label{eqsupport} \max \{d_1^{2/3} d_2 , d_1d_2^{2/3} \} \leq z \end{equation}
where $ \ds z=x^{1/3-\eps}$. This explains why we have chosen the support of the sieve weights to be $S(z)$.\\ \vskip 0.01in
Now, Corollary \ref{2^Og} gives us
\begin{equation}
\label{s2}
\begin{split}
S_2&=2x\fr{\vp(W)}{W^2}c(W)\sum\lm_{\st{(d_i,l_j)=1}} \lb_{d_1,d_2} \lb_{l_1,l_2} 
\frac{\tau[d_1,l_1]}{[d_1,l_1]} \fr{1}{g[d_2,l_2]} \left( \log{x}-\log{[d_1,l_1]}+c'+ 
2\sum\lm_{\st{p \mid W[d_2,l_2] \\ p>2}} \frac{\log{p}}{p-2}\right) \\
&\quad+O_{\eps}\left(x^{1/2+\eps}\sum\lm_{\st{d_i,l_i}}\fr{|\lb_{d_1,d_2}||\lb_{l_1,l_2}|}{W^{1/4}[d_1,l_1]^{1/2}[d_2,l_2]^{1/4}} \right)
\end{split}
\end{equation}
where we have (with the notation of Corollary \ref{2^Og}), 
$$ g(p)=\fr{p(p-1)}{p-2} \quad \text{and} \quad c(W)=\prod\lm_{p \nmid W} \fr{(p-1)^2}{p(p-2)}$$
We can therefore write
\begin{equation}
\label{s2temp}
S_2=2x\fr{\vp(W)}{W^2}c(W) (M_2 - M_{21} + M_{22})+E_2
\end{equation}
where we have with $f(n)=n/\tau(n)$ that
\begin{equation}
\begin{split}
\label{identities}
M_2 &= (\log{x}+c')\sum\lm_{\st{(d_i,l_j)=1}}\fr{\lb_{d_1,d_2}\lb_{l_1,l_2}}{f[d_1,l_1]g[d_2,l_2]} \\
\quad M_{21} &= \sum\lm_{\st{(d_i,l_j)=1}} \fr{\lb_{d_1,d_2}\lb_{l_1,l_2}}{f[d_1,l_1]g[d_2,l_2]}\log{[d_1,l_1]}\\
M_{22}&=2\sum\lm_{\st{(d_i,l_j)=1}} \fr{\lb_{d_1,d_2}\lb_{l_1,l_2}}{f[d_1,l_1]g[d_2,l_2]}\sum\lm_{\st{p \mid W[d_2,l_2] \\ p>2}} \frac{\log{p}}{p-2} \\
E_2 &\ll_{\eps} \fr{x^{1/2+\eps}}{W^{1/4}} \sum\lm_{d_i,l_i}
\fr{|\lb_{d_1,d_2}| |\lb_{l_1,l_2}|}{[d_1,l_1]^{1/2}[d_2,l_2]^{1/4}}
\end{split}
\end{equation}
Out of all these terms, only the terms $M_2$ and $M_{21}$ will be contributing to the main term. The rest of the terms $M_{22}$ and $E_2$ will be error terms. \vskip 0.01in
To compute these terms, we prove a few Lemmas concerning additive functions. The purpose of doing this is to simplify the computation for $M_{21}$ and the estimation for $M_{22}$ with these Lemmas. In  both of these cases, Lemma \ref{useful} is directly applicable. In simpler words, Lemma \ref{useful} helps us give an asymptotic formula for $M_{21}$ and as well as we can use it to provide an upper bound for $M_{22}$.
\begin{lem}
\label{add}
Let $L(n)$ be an additive function defined on squarefree integers by $\ds L(n)=\sum\lm_{p \mid n}L(p)$. Let $C_{r_1,r_2}$ be as in $(\ref{BCr1r2})$. Then
$$\sum\lm_{\st{d_i \cg 0 \md{r_i}}}\fr{\lb_{d_1,d_2}}{f(d_1)g(d_2)}L(d_1)=\fr{C_{r_1,r_2}}{f_1(r_1)g_1(r_2)}L(r_1) + 
2\sum\lm_{\st{p}}\fr{C_{pr_1,r_2}}{f_1(r_1)g_1(r_2)}\fr{L(p)}{p-2}$$
where $f_1*1=f$ and $g_1*1=g$.
\end{lem}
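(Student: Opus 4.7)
The plan is to exploit the additivity of $L$ on squarefree integers, together with the observation that the sum defining the LHS is effectively supported on squarefree $d_1$ (because $\lb_{d_1,d_2}$ vanishes outside $S(z)$, forcing $\mu^2(d_1d_2W)=1$). Since $r_1 \mid d_1$ and $d_1$ is squarefree, I can write
$$L(d_1) = L(r_1) + \sum_{\substack{p \mid d_1 \\ p \nmid r_1}} L(p).$$
Substituting this splits the left-hand side into two pieces: an $L(r_1)$-piece that is just $L(r_1)$ times $\sum_{d_i \cg 0\md{r_i}} \lb_{d_1,d_2}/(f(d_1)g(d_2))$, which by the definition of $C_{r_1,r_2}$ in (\ref{BCr1r2}) equals $L(r_1)\,C_{r_1,r_2}/(f_1(r_1)g_1(r_2))$, matching the first term on the RHS.

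For the remaining piece, I would swap the order of summation:
$$\sum_{d_i \cg 0 \md{r_i}} \frac{\lb_{d_1,d_2}}{f(d_1)g(d_2)} \sum_{\substack{p \mid d_1 \\ p \nmid r_1}} L(p) = \sum_{p \nmid r_1} L(p) \sum_{\substack{d_1 \cg 0 \md{pr_1} \\ d_2 \cg 0 \md{r_2}}} \frac{\lb_{d_1,d_2}}{f(d_1)g(d_2)}.$$
Since $(p,r_1)=1$, the inner sum is exactly $C_{pr_1,r_2}/(f_1(pr_1)g_1(r_2))$ by the definition of $C$. Because $f_1$ is multiplicative (being the Dirichlet convolution factor of $f=f_1*1$) and $(p,r_1)=1$, we have $f_1(pr_1)=f_1(p)f_1(r_1)$. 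Computing $f_1(p)=f(p)-1=p/2-1=(p-2)/2$ produces the factor $2/(p-2)$ needed for the RHS.

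Finally, I need to check that the prime sum may be extended from $p \nmid r_1$ to all $p$. If $p \mid r_1$, then the condition $pr_1 \mid d_1$ with $d_1$ squarefree is impossible (it would force $p^2 \mid d_1$); hence every admissible $d_1$ is outside the support of $\lb$, and $C_{pr_1,r_2}=0$ for such $p$. Extending the sum therefore costs nothing and yields the stated identity.

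The main obstacle is essentially bookkeeping: tracking the vanishing condition $C_{pr_1,r_2}=0$ when $p\mid r_1$, and justifying the swap of summation via absolute convergence (which is automatic since the supports are finite by $S(z)$). No deep analytic input is required; the lemma is a structural identity flowing from additivity of $L$ and multiplicativity of $f_1$.
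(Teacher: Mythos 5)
Your proof is correct and is essentially the paper's own argument: both expand $L(d_1)$ over primes, swap the order of summation to recognize the inner sum as a $C$-value, and then separate the cases $p\mid r_1$ versus $p\nmid r_1$ (you split $L(d_1)=L(r_1)+\sum_{p\mid d_1,\,p\nmid r_1}L(p)$ first and swap afterward; the paper swaps first via $\sum_{p}L(p)\,C_{[p,r_1],r_2}/\bigl(f_1[p,r_1]g_1(r_2)\bigr)$ and splits afterward, but the two orderings are arithmetically identical). Your extra remark that $C_{pr_1,r_2}=0$ when $p\mid r_1$ — so the prime sum may be taken over all $p$ — is a correct and useful clarification that the paper leaves implicit.
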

\begin{proof}
We have
\begin{equation*}
\begin{split}
\sum\lm_{\st{d_i \cg 0 \md{r_i}}}\fr{\lb_{d_1,d_2}}{f(d_1)f(d_2)}L(d_1)&=\sum\lm_{\st{d_i \cg 0 \md{r_i}}}
\fr{\lb_{d_1,d_2}}{f(d_1)g(d_2)}\sum\lm_{p \mid d_1}L(p)=\sum\lm_{p}L(p)
\sum\lm_{\st{d_1 \cg 0\md{[p,r_1]}\\d_2 \cg 0 \md{r_2}}}\fr{\lb_{d_1,d_2}}{f(d_1)g(d_2)}\\
&=\sum\lm_{p}L(p) \fr{C_{[p,r_1],r_2}}{f_1[p,r_1]g_1(r_2)}=
\fr{C_{r_1,r_2}}{f_1(r_1)g_1(r_2)}L(r_1)+2\sum\lm_{p}\fr{C_{pr_1,r_2}}{f_1(r_1)g_1(r_2)}\fr{L(p)}{p-2}
\end{split}
\end{equation*}
In the last step above, we have split the sum into two cases, depending on whether $p \mid r_1$ or not. This completes the proof.
\end{proof}
\medskip
\begin{prop}
\label{useful}
Let $L(n)$ be as in Lemma $\ref{add}$ with the further restriction that $L(n) \ll \log{n}$. Then
\begin{equation*}
\begin{split}
\sum\lm_{\st{(d_i,l_j)=1}}\fr{\lb_{d_1,d_2}\lb_{l_1,l_2}}{f[d_1,l_1]g[d_2,l_2]}L[d_1,l_1]&=\sum\lm_{r_i}
\fr{C^2_{r_1,r_2}}{f_1(r_1)g_1(r_2)}L(r_1) +4\sum\lm_{r_i}\fr{C_{r_1,r_2}}{f_1(r_1)g_1(r_2)}\sum\lm_{p} \fr{L(p)}{p-2} C_{pr_1,r_2} + O \left( \fr{\log^6{z}}{D} \right)
\end{split}
\end{equation*}
\end{prop}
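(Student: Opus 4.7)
The plan is to mirror the evaluation of $S_1$, combined with Lemma \ref{add}, after first decomposing the additive function $L[d_1,l_1]$ into pieces that separate the $d$- and $l$-variables. Since $L$ is additive on squarefree integers (so that $L[d_1,l_1] + L((d_1,l_1)) = L(d_1) + L(l_1)$ simply by counting primes in each factor), one has $L[d_1,l_1] = L(d_1) + L(l_1) - L((d_1,l_1))$, which splits the left-hand side into $U + U' - V$. The summand is manifestly invariant under the relabeling $(d_1,d_2) \leftrightarrow (l_1,l_2)$, so $U = U'$ and it suffices to evaluate $2U - V$.

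For $U$, I would use the standard machinery from the $S_1$ computation: apply the identity $1/f[d_1,l_1] = (f(d_1)f(l_1))^{-1}\sum_{r_1 \mid (d_1,l_1)} f_1(r_1)$ and its analogue for $g$ to introduce $r_1, r_2$; strip the conditions $(d_i,l_j)=1$ for $i \neq j$ using Möbius factors $\mu(s_{12})\mu(s_{21})$; and truncate to $s_{12}=s_{21}=1$ at the usual cost $O(\log^6 z / D)$. The resulting double sum factors. The $l$-piece is $C_{r_1,r_2}/(f_1(r_1) g_1(r_2))$ by definition, while the $d$-piece, carrying the extra $L(d_1)$, is handled by Lemma \ref{add}. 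Collecting yields
\begin{equation*}
U = \sum_{r_1,r_2} \frac{C^2_{r_1,r_2}\, L(r_1)}{f_1(r_1) g_1(r_2)} + 2\sum_{r_1,r_2} \frac{C_{r_1,r_2}}{f_1(r_1) g_1(r_2)} \sum_{p} \frac{L(p)\, C_{pr_1,r_2}}{p-2} + O\!\left(\frac{\log^6 z}{D}\right).
\end{equation*}

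For $V$, I would expand $L((d_1,l_1)) = \sum_p L(p)\,\mathbf{1}_{p \mid d_1}\,\mathbf{1}_{p \mid l_1}$ and run the same machinery; now both the $d$- and $l$-sums acquire the condition $p \mid d_1$ (respectively $p \mid l_1$), so each contracts to $C_{[p,r_1],r_2}/(f_1([p,r_1]) g_1(r_2))$. Splitting on whether $p \mid r_1$ (giving $[p,r_1]=r_1$, hence a copy of $\sum_{r_1,r_2} C^2_{r_1,r_2} L(r_1)/(f_1(r_1) g_1(r_2))$) or $p \nmid r_1$ (giving $f_1([p,r_1]) = \tfrac{p-2}{2}f_1(r_1)$) isolates one main term from an anticipated error. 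Forming $2U - V$ then brings the coefficient of $\sum C^2 L(r_1)/(f_1 g_1)$ down from $2$ to $1$, producing the stated identity.

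The main obstacle will be confirming that the $p \nmid r_1$ piece of $V$ — which has no analogue in $U$ — really fits inside the error budget $O(\log^6 z / D)$. This should follow from $|C_{pr_1,r_2}| \ll B$ (Lemma \ref{LBCr1r2}(c)) together with the convergence of $\sum_p L(p)/(p-2)^2$ under the hypothesis $L(n) \ll \log n$, so that the extra prime sum contributes only a bounded factor rather than a further power of $\log z$. A secondary bookkeeping issue is making sure that the $s_{12},s_{21}$ truncation remains $O(\log^6 z / D)$ even with the added weight $L(d_1) \ll \log z$; this is handled by the same Mertens estimates used for $S_1$, with the extra logarithmic factor absorbed by the rapid decay of $1/\varphi^2(s_{ij})$ for $s_{ij} > D$.
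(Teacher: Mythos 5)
Your proposal is correct and follows essentially the same line as the paper's proof: the additive split $L[d_1,l_1]=L(d_1)+L(l_1)-L((d_1,l_1))$, an appeal to Lemma~\ref{add} for the two $L(d_1),L(l_1)$ pieces, the $f_1,g_1$-convolution identity together with the M\"obius device to strip $(d_1,l_2)=(d_2,l_1)=1$, truncation to $s_{12}=s_{21}=1$ at cost $O(\log^6 z/D)$, and the dichotomy $p\mid r_1$ versus $p\nmid r_1$ for the third piece $V=\sum_3$. You have isolated exactly the right subtlety, namely that the $p\nmid r_1$ part of $V$ carries the factor $f_1(p)^{-2}\asymp p^{-2}$, rendering the prime sum convergent and that entire contribution $O(\log^5 z)\ll\log^6 z/D$; moreover, carrying out the $p\mid r_1$ split only after setting $s_{12}=s_{21}=1$, as you do, is a slightly tidier route than the paper's four-way case split in (\ref{sg3}) before truncation, and makes the final tally $2U-V=\sum_1+\sum_2-\sum_3$ come out transparently.
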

\begin{proof}
Now, 
\begin{equation*}
\begin{split}
LHS&=\sum\lm_{\st{(d_i,l_j)=1}}\fr{\lb_{d_1,d_2}\lb_{l_1,l_2}}{f[d_1,l_1]g[d_2,l_2]}L{[d_1,l_1]}=\sum\lm_{\st{(d_i,l_j)=1}}\fr{\lb_{d_1,d_2}\lb_{l_1,l_2}}{f[d_1,l_1]g[d_2,l_2]}(L(d_1)+L(l_1)-L(d_1,l_1))\\
&=\sd{}{_1}\sum \quad + \quad \sd{}{_2}\sum \quad-\quad \sd{}{_3}\sum
\end{split}
\end{equation*}
The fact that $L[d_1,l_1]=L(d_1)+L(l_1)-L(d_1,l_1)$ follows from the identity $[d_1,l_1]=\fr{d_1l_1}{(d_1,l_1)}$ and the fact that $L$ is additive.\\ \vskip 0.01in
\nd
As we have done while computing $S_1$, we want to get rid of the conditions $(d_1,l_2)=(d_2,l_1)=1$. So we multiply a factor
$$\sum\lm_{\st{s_{12} \mid (d_1,l_2)\\s_{21} \mid (d_2,l_1)}}\mu(s_{12})\mu(s_{21})$$ 
We then have the same conditions as before that $(r_1s_{12},r_2s_{21}), \ (r_1s_{21},r_2s_{12}) \in S(z)$.\\ \vskip 0.01in
Moreover, one can write 
$$\fr{1}{f[d_1,l_1]g[d_2,l_2]}=\fr{1}{f(d_1)f(l_1)g(d_2)g(l_2)} \sum\lm_{\st{r_i \mid (d_i,l_i)}}f_1(r_1)g_1(r_2)$$
Therefore,
\begin{equation}
\begin{split}
\sd{}{_1}\sum&=\sum\lm_{\st{d_i,l_i}}\fr{\lb_{d_1,d_2}\lb_{l_1,l_2}}{f(d_1)g(d_2)f(l_1)g(l_2)}L(d_1)
\sum\lm_{\st{r_i \mid (d_i,l_i)}}f_1(r_1)g_1(r_2)\sum\lm_{\st{s_{ij} \mid (d_i,l_j)}}\mu(s_{12})\mu(s_{21})\\
&=\sum\lm_{r_i,s_{ij}}\mu(s_{12})\mu(s_{21})f_1(r_1)g_1(r_2) \left( \sum\lm_{\st{l_i \cg 0 \md{r_i s_{ji}}}}\fr{\lb_{l_1,l_2}}{f(l_1)g(l_2)} \right)
\left(\sum\lm_{\st{d_i \cg 0\md{r_is_{ij}}}} \fr{\lb_{d_1,d_2}}{f(d_1)g(d_2)}L(d_1)\right)
\end{split}
\end{equation}
There are two bracketed quantities in the last term above. The first one is $C_{r_1s_{21},r_2s_{12}}$ divided by the quantity $f_1(r_1s_{21})g_1(r_2s_{12})$. For the second bracketed quantity, we invoke Lemma \ref{add}. We therefore obtain the expression 
\begin{equation}
\label{3.21}
\sd{}{_1}\sum=\sum\lm_{\st{r_i,s_{ij}}}\fr{\mu(s_{12}s_{21})}{f_1(s_{12}s_{21})g_1(s_{12}s_{21})}\fr{C_{r_1s_{21},r_2s_{12}}}{f_1(r_1)g_1(r_2)} \left( L(r_1s_{12})C_{r_1s_{12},r_2 s_{21}} + 2\sum\lm_{\st{p}}\fr{L(p)}{p-2}C_{pr_1s_{12},r_2s_{21}}\right)
\end{equation}
Above, we have used that fact that $f_1(p)=(p-2)/2$. Again, the contribution to $(\ref{3.21})$ from all those $(s_{12},s_{21})$ for which atleast one of $s_{ij} > D$ is 
$$\ll \sum\lm_{\st{(s_{12},s_{21}) \in S(z)\\s_{12}>D\\(r_1,r_2) \in S(z)}}\fr{1}{f_1(s_{12}s_{21})g_1(s_{12}s_{21})}\fr{\log^3{z}}{f_1(r_1)g_1(r_2)}  \ll \fr{\log^6{z}}{D}$$
where we have used the estimate $C_{r_1,r_2} \ll \log{z}$ from Proposition \ref{LBCr1r2}, the assumpion that $L(n) \ll \log{n}$ and the fact that $f_1 \in \Og_2$, $g_1 \in \Og_1$. \vskip 0.01in
So, we can assume $s_{12}=s_{21}=1$ at the cost of error $O\left( \fr{\log^6{z}}{D} \right)$. 
This gives us
\begin{equation*}
\sd{}{_1}\sum=\sum\lm_{r_i}\fr{C_{r_1,r_2}}{f_1(r_1)g_1(r_2)} 
\left( L(r_1)C_{r_1,r_2} + 2\sum\lm_{p \leq z}\fr{L(p)}{p-2}C_{pr_1,r_2} \right) + O\left( \fr{\log^6{z}}{D} \right)
\end{equation*}
By the same argument, one obtains
\begin{equation}
\sd{}{_2}\sum=\sd{}{_1}\sum
\end{equation}
We do similar operations as above for $\sd{}{_3}\sum$, We have
\begin{equation*}
\begin{split}
 \sd{}{_3}\sum &=\sum\lm_{\st{d_i,l_i}}\fr{\lb_{d_1,d_2}\lb_{l_1,l_2}}{f[d_1,l_1]g[d_2,l_2]}\sum\lm_{p \mid (d_1,l_1)}L(p)
\sum\lm_{\st{s_{ij} \mid (d_i,l_j)}}\mu(s_{12})\mu(s_{21})\\
& =\sum\lm_{\st{r_i,s_{ij}}}\mu(s_{12}s_{21})f_1(r_1)g_1(r_2) \sum\lm_{p} L(p) 
\left( \sum\lm_{\st{d_1 \cg 0\md{[p,r_1s_{12}]})\\d_2 \cg 0\md{r_2s_{21}}}} \fr{\lb_{d_1,d_2}}{f(d_1)g(d_2)} \right) 
\left( \sum\lm_{\st{l_1 \cg 0\md{[p,r_1s_{21}]})\\l_2 \cg 0\md{r_2s_{12}}}} \fr{\lb_{l_1,l_2}}{f(l_1)g(l_2)} \right)
\end{split}
\end{equation*}
Substituting the two bracketed quantities with appropriate $C_{r_1,r_2}$'s, we obtain
\begin{equation}
\sd{}{_3} \sum= \sum\lm_{\st{r_i,s_{ij}}} \fr{\mu(s_{12}s_{21})}{f_1(s_{12}s_{21})g_1(s_{12}s_{21})}\sum\lm_{p}
L(p)\fr{f_1(r_1)g_1(r_2)C_{[p,r_1s_{12}],r_2s_{21}}C_{[p,r_1s_{21}],r_2s_{12}}}{f_1[p,r_1s_{12}]f_1[p,r_1s_{21}] g_1(r_2s_{21})g_1(r_2s_{12})}
\label{absum}
\end{equation}

We now split the sum (\ref{absum}) into 4 cases depending on whether $p \mid r_1$, $p \mid s_{12}$, $p \mid s_{21}$ or $p \nmid r_1s_{12}s_{21}$. These are the only cases that can occur because we have $\mu^2(r_1s_{12}s_{21})=1$. We would then have
\begin{equation}
\begin{split}
\label{sg3}
\sd{}{_3} \sum &=\sum\lm_{\st{r_i,s_{ij}}}\fr{\mu(s_{12}s_{21})}{f_1(s_{12}s_{21})g_1(s_{12}s_{21})}
\fr{C_{r_1s_{12},r_2s_{21}}C_{r_1s_{21},r_2s_{12}}}{f_1(r_1)g_1(r_2)}L(r_1)\\
& \quad + 2\sum\lm_{\st{r_i,s_{ij}}}\fr{\mu(s_{12}s_{21})}{f_1(s_{12}s_{21})g_1(s_{12}s_{21})}\sum\lm_{\st{p}}\fr{L(p)}{(p-2)}\fr{C_{r_1s_{12},r_2s_{21}}C_{pr_1s_{21},r_2s_{12}}}{f_1(r_1)g_1(r_2)}\\
& \quad + 2\sum\lm_{\st{r_i,s_{ij}}}\fr{\mu(s_{12}s_{21})}{f_1(s_{12}s_{21})g_1(s_{12}s_{21})}
\sum\lm_{\st{p}}\fr{L(p)}{(p-2)}\fr{C_{pr_1s_{12},r_2s_{21}}C_{r_1s_{21},r_2s_{12}}}{f_1(r_1)g_1(r_2)}\\
& \quad + 4\sum\lm_{\st{r_i,s_{ij}}}\fr{\mu(s_{12}s_{21})}{f_1(s_{12}s_{21})g_1(s_{12}s_{21})}\sum\lm_{\st{p}}\fr{L(p)}{(p-2)^2}\fr{C_{pr_1s_{12},r_2s_{21}}C_{pr_1s_{21},r_2s_{12}}}{f_1(r_1)g_1(r_2)}
\end{split}
\end{equation}

Of the four terms in (\ref{sg3}) above, the 4th term is $O(\log^5{z})$. This follows by using the estimates $C_{r_1,r_2} \ll \log{z}$, $L(n) \ll \log{n}$ and the fact that $f_1 \in \Og_2$ and $g_1 \in \Og_1$. The 1st, 2nd and 3rd terms will be contributing to the main term. \vskip 0.01in
Again, the contribution to the 1st term, 2nd term and 3rd term of $(\ref{sg3})$ when atleast one of $s_{ij}>D$ is
$$ \ll \sum\lm_{\st{(s_{12},s_{21}) \in S(z)\\s_{12}>D}}\fr{1}{f_1(s_{12}s_{21})g_1(s_{12}s_{21})}
\sum\lm_{\st{(r_1,r_2) \in S(z)}}\fr{\log^3{z}}{f_1(r_1)g_1(r_2)} \ll 
\fr{\log^6{z}}{D}$$
So we may assume $s_{12}=s_{21}=1$ at the cost of error $O\left( \fr{\log^6{z}}{D} \right)$. Hence,
\begin{equation*}
\begin{split}
\sd{}{_3}\sum &= \sum\lm_{r_i}\fr{C^2_{r_1,r_2}}{f_1(r_1)g_1(r_2)}L(r_1) + 4 \sum\lm_{r_i}\fr{C_{r_1,r_2}}{f_1(r_1)g_1(r_2)}\sum\lm_{p}\fr{\log{p}}{p-2} C_{pr_1,r_2}+ O\left( \fr{\log^6{z}}{D} \right)
\end{split}
\end{equation*}
Putting together the expressions for $\sd{}{_1} \sum$, $\sd{}{_2} \sum$ and $\sd{}{_3} \sum$, we get the desired result.
\end{proof}
We move forward with with our computations. We compute $M_2$.
\medskip
\subsubsection*{Evaluation of \tps{$M_2$}{}}
We multiply a factor of $\sum\lm_{\st{s_{ij} \mid (d_i,l_j)}}\mu(s_{12})\mu(s_{21})$ to get rid of the conditions 
$(d_1,l_2)=(d_2,l_1)=1$. We then have as before that $\mu^2(r_1r_2s_{12}s_{21})=1$. Hence
\begin{equation}
\begin{split}
\fr{M_2}{\log{x}+c'}&=\sum\lm_{\st{(d_i,l_j)=1}} \fr{\lb_{d_1,d_2}\lb_{l_1,l_2}}{f[d_1,l_1]g[d_2,l_2]}=\sum\lm_{\st{d_i,l_i}}\fr{\lb_{d_1,d_2}}{f(d_1)g(d_2)}\fr{\lb_{l_1,l_2}}{f(l_1)g(l_2)} 
\sum\lm_{\st{r_i \mid (d_i,l_i)}}f_1(r_1)g_1(r_2) \sum\lm_{\st{s_{ij} \mid (d_i,l_j)}}\mu(s_{12})\mu(s_{21})\\
&=\sum\lm_{s_{ij}}\fr{\mu(s_{12}s_{21})}{f_1(s_{12}s_{21})g_1(s_{12}s_{21})}
\sum\lm_{r_i}\fr{C_{r_1s_{12},r_2}C_{r_1s_{21},r_2s_{12}}}{f_1(r_1)g_1(r_2)}
\end{split}
\end{equation}
As before, the contribution to the above sum when atleast one of $s_{ij}>D$ is $O \left( \fr{\log^5{z}}{D} \right)$.\\
So we may assume $s_{12}=s_{21}=1$ at the cost of error $O\left( \fr{\log^5{z}}{D} \right)$. Therefore,
\begin{equation*}
\begin{split}
\fr{M_2}{\log{x}+c'}&=\sum\lm_{r_i}\fr{C^2_{r_1,r_2}}{f_1(r_1)g_1(r_2)} +
O\left( \fr{\log^6{x}}{D} \right)\\
&=\sum\lm_{\st{(r_1,r_2) \in S(z)}}\fr{\mu^2(r_1)}{f_1(r_1)}\fr{\mu^2(r_2)}{g_1(r_2)}\left( B^2 h_1^2(r_1) h_2^2(r_2) Q_2^2\left( \fr{\log{r_1}}{\log{z}},\fr{\log{r_2}}{\log{z}} \right) + O(\log z) \right)+O\left( \fr{\log^6{x}}{D} \right)
\end{split}
\end{equation*}
where we have substituted the expression for $C_{r_1,r_2}$ from Lemma \ref{LBCr1r2}. By applying Corollary $\ref{sumS}$ to the inner sum of the main term above, we obtain
\begin{equation}
\begin{split}
\label{M2}
M_2&=B^5 \log x \iint\lm_{T \ \ } s_1Q_2^2(s_1,s_2)\,ds_2 \,ds_1 + O(\log^5 z)+O\left( \fr{\log^6 z}{D} \right)\\
&=(1+o(1))B^5 \log x \iint\lm_{T \ \ } s_1Q_2^2(s_1,s_2)\,ds_2 \,ds_1
\end{split}
\end{equation}
where $Q_2$ is as given in $(\ref{Q1Q2})$.
\medskip
\subsubsection*{Evaluation of \tps{$M_{21}$}{}}
Applying Proposition $\ref{useful}$ to the expression for $M_{21}$ in $(\ref{identities})$ with $L(n)=\log{n}$, we get
\begin{equation*}
\begin{split}
M_{21}&=\sum\lm_{r_i}\fr{C^2_{r_1,r_2}}{f_1(r_1)g_1(r_2)} \log{r_1} +4\sum\lm_{r_i}\fr{C_{r_1,r_2}}{f_1(r_1)g_1(r_2)}
\sum\lm_{p} \fr{\log{p}}{p-2} C_{pr_1,r_2} + O \left( \fr{\log^6{z}}{D} \right) \\
&=M_{21}^{(1)} \quad + \quad M_{21}^{(2)} \quad+ \quad  O \left( \fr{\log^6{z}}{D} \right)
\end{split}
\end{equation*}
Substituting the expression for $C_{r_1,r_2}$, we obtain
\begin{equation*}
M_{21}^{(1)}=\log{z}\sum\lm_{\st{(r_1,r_2) \in S(z)}}
\fr{\mu^2(r_1)}{f_1(r_1)}\fr{\mu^2(r_2)}{g_1(r_2)}\left( B^2 h_1^2(r_1) h_2^2(r_2) \fr{\log{r_1}}{\log{z}}Q_2^2\left( \fr{\log{r_1}}{\log{z}},\fr{\log{r_2}}{\log{z}} \right) + O(\log z) \right)
\end{equation*}
By Corollary $\ref{sumS}$ applied to the above sum, we get
\begin{equation}
\label{M21(1)}
M_{21}^{(1)}=(1+o(1))B^5\log{z}\iint\lm_{T \ \ } s_1^2Q_2^2(s_1,s_2)\,ds_2 \,ds_1
\end{equation}
Next, we look at $M_{21}^{(2)}$ and substitute the value of $C_{pr_1,r_2}$ and write $s_i=\fr{\log{r_i}}{\log{z}}$ to get
\begin{equation}
\begin{split}
\label{m21(2)}
M_{21}^{(2)}&=-4 \sum\lm_{\st{(r_1,r_2) \in S(z)}}\fr{\mu(r_1)\mu(r_2)C_{r_1,r_2}}{f_1(r_1)g_1(r_2)}
\sum\lm_{\st{(pr_1,r_2) \in S(z)}} \fr{(p-1)^2}{p(p-2)}\fr{\log{p}}{p}\left( B Q_2 \left( s_1+ \fr{\log{p}}{\log{z}},s_2 \right) + O(1) \right)\\
&= -4 \sum\lm_{\st{(r_1,r_2) \in S(z)}}\fr{\mu(r_1)\mu(r_2)C_{r_1,r_2}}{f_1(r_1)g_1(r_2)}\left[B \sum\lm_{\st{(pr_1,r_2) \in S(z)}} \fr{(p-1)^2}{p(p-2)}\fr{\log{p}}{p} Q_2 \left( s_1+ \fr{\log{p}}{\log{z}},s_2 \right) + O(\log z) \right]
\end{split}
\end{equation}
First, we focus on the inner sum of (\ref{m21(2)}). We have the condition $(pr_1,r_2) \in S(z)$ which is equivalent to
\begin{equation*}
p \leq z^{\eta\left(s_2\right)-s_1} \quad \text{and} \quad (p,Wr_1r_2)=1
\end{equation*}
with $\eta(s)$ as defined in (\ref{etadef}). Observing that $ \ds \fr{(p-1)^2}{p(p-2)}=1+O(1/p^2)$, it follows that the inner sum of (\ref{m21(2)}) is
\begin{equation*}
B \sum\lm_{\st{2 <p \leq z^{\eta(s_2)-s_1}}} \fr{\log{p}}{p} Q_2 \left( s_1+ \fr{\log{p}}{\log{z}},s_2 \right)
+O\left( B \sum\lm_{p \mid Wr_1r_2}\fr{\log{p}}{p} \right)+O(\log z)
\end{equation*}
By Proposition \ref{logp/logz}, the main term above becomes
\begin{equation*}
\begin{split}
&\quad B (\log z+O(1))\int\lm_{0}^{\eta(s_2)-s_1}Q_2(s_1+t,s_2)\,dt =B(\log z+O(1))\int\lm_{s_1}^{\eta(s_2)} Q_2(t_1,s_2)\\
&= B(\log z+O(1)) \int\lm_{s_1}^{\eta(s_2)} \int\lm_{s_2}^{\eta(s_1)} P(t_1,t_2) \, dt_2 \, dt_1=B(\log z+O(1)) \ Q_1(s_1,s_2)
\end{split}
\end{equation*}
Plugging the expression for the inner sum back into (\ref{m21(2)}), we have
\begin{equation}
\begin{split}
\label{m212temp}
M_{21}^{(2)}&=-4\sum\lm_{\st{(r_1,r_2) \in S(z)}}\fr{\mu^2(r_1)\mu^2(r_2)}{f_1(r_1) g_1(r_2)}\\
&\quad \times \left( B^2 \log z \ h_1(r_1)h_2(r_2)\ Q_1\left( \fr{\log r_1}{\log z},\fr{\log r_2}{\log z} \right) Q_2\left( \fr{\log r_1}{\log z},\fr{\log r_2}{\log z} \right)  + O(\log^2 z) \right)\\
 & \quad + O\left(\log^2{z}\sum\lm_{\st{(r_1,r_2) \in S(z)}} \fr{h_1(r_1)}{f_1(r_1)}\fr{h_2(r_2)}{g_1(r_2)}\sum\lm_{p \mid Wr_1r_2}\fr{\log{p}}{p} \right)
\end{split}
\end{equation}
The second error term in $(\ref{m212temp})$ turns out to be $O(\log^5{z})$. This is done by replacing 
$$\sum\lm_{(r_1,r_2) \in S(z)} \ \text{by} \ \ll \sum\lm_{\st{r_1 \leq z\\r_2 \leq z}} \quad \text{and}  \quad \sum\lm_{p \mid Wr_1r_2} \ \text{by} \ \sum\lm_{p \mid W}+\sum\lm_{p \mid r_1}+\sum\lm_{p \mid r_2}$$ 
and interchanging the order of summation. Coming back to the main term in (\ref{m212temp}), apply Corollary $\ref{sumS}$ to the inner sum and get
\begin{equation}
\begin{split}
\label{M21(2)}
M_{21}^{(2)}&=-4(1+o(1))B^5 \log{z}\iint\lm_{T \ \ } s_1Q_1(s_1,s_2)Q_2(s_1,s_2) \,ds_2 \,ds_1 
\end{split}
\end{equation}
\subsubsection*{Estimation of \tps{$M_{22}$}{}}
For the estimation of $M_{22}$, we can apply Proposition \ref{useful} with $\ds L(n)=\sum\lm_{\st{p \mid n\\p>2}}\fr{\log{p}}{p-2}$ and obtain
\begin{equation}
\label{m22temp}
M_{22}=\sum\lm_{r_i} \fr{C_{r_1,r_2}}{f_1(r_1)g_1(r_2)} \left( C_{r_1,r_2} \left(\sum\lm_{\st{p \mid r_1\\p>2}}\fr{\log{p}}{p-2}\right) + 4\sum\lm_{\st{2<p \leq z}}\fr{\log{p}}{(p-2)^2}C_{pr_1,r_2} \right) + O\left( \fr{\log^6{z}}{D} \right)
\end{equation}
Using the estimate $C_{r_1,r_2} \ll \log{z}$ in (\ref{m22temp}) we get 
\begin{equation}
\begin{split}
\label{M22}
M_{22} & \ll \sum\lm_{\st{r_1 \leq z\\r_2 \leq z}}\fr{ \log^2{z}}{f_1(r_1)g_1(r_2)} 
\left(\sum\lm_{\st{p \mid r_1\\p>2}}\fr{\log{p}}{p-2} + \sum\lm_{2<p \leq z}\fr{\log{p}}{(p-2)^2} \right) + O\left( \fr{\log^{z}}{D} \right)\\
& \ll \log^2{z}\left( \sum\lm_{r_2 \leq z}\fr{1}{g_1(r_2)} \right) \left( \sum\lm_{2<p \leq z}\fr{\log{p}}{p(p-2)}
\sum\lm_{\st{r_1 \leq \fr{z}{p}}}\fr{1}{f_1(r_1)} \right)+\log^5{z}+\fr{\log^6{z}}{D}\\
& \ll \fr{\log^6{z}}{D}
\end{split}
\end{equation}
\subsubsection*{Estimation of \tps{$E_2$}{}}
From (\ref{identities}), we have
$$ E_2 \ll_{\eps} \fr{x^{1/2+\eps}}{W^{1/4}} \sum\lm_{\st{d_i,l_i}}
\fr{|\lb_{d_1,d_2}| |\lb_{l_1,l_2}|}{[d_1,l_1]^{1/2}[d_2,l_2]^{1/4}} $$
Using the estimate $\lb_{d_1,d_2} \ll \log^4{z}$ from Lemma \ref{LBCr1r2} and writing $\ds [d_i,l_i]=\fr{d_il_i}{(d_i,l_i)}$, we get
$$ E_2 \ll_{\eps} \fr{x^{1/2+\eps} \log^8{z}}{W^{1/4}}\sum\lm_{\st{(d_1,d_2) \in S(z)\\(l_1,l_2) \in S(z)}}
\fr{(d_1,l_1)^{1/2}(d_2,l_2)^{1/4}}{d_1^{1/2}d_2^{1/4}l_1^{1/2} l_2^{1/4}} $$
Since $W^{1/4} \ll (\log\log{x})^{1/4}$ and $\log^8{z}$ are small, these terms can be swallowed into the term $x^{\eps}$. Therefore,
\begin{equation}
\begin{split}
\label{e2tmp}
E_2 &\ll_{\eps} x^{1/2+\eps} \sum\lm_{\st{(g_1,g_2) \in S(z)}} g_1^{1/2}g_2^{1/4} \sum\lm_{\st{(d_1,d_2) \in S(z)\\(l_1,l_2) \in S(z)\\(d_i,l_i)=g_i}}d_1^{-1/2}d_2^{-1/4}l_1^{-1/2}l_2^{-1/4}\\
& \ll_{\eps} x^{1/2+\eps} \sum\lm_{(g_1,g_2) \in S(z)} g_1^{-1/2} g_2^{-1/4} 
\left(\sum\lm_{\st{d_1 \leq z/g_1 \\ g_2d_2 \leq z/(g_1d_1)^{2/3}}}d_1^{-1/2}d_2^{-1/4} \right)^2 \\
\end{split}
\end{equation}
Now,
\begin{equation*}
\begin{split}
\sum\lm_{\st{d_1 \leq z/g_1 \\ g_2d_2 \leq z/(g_1d_1)^{2/3}}} d_1^{-1/2} d_2^{-1/4}&=\sum\lm_{d_1 \leq z/g_1}d_1^{-1/2}\sum\lm_{g_2d_2 \leq z/(g_1d_1)^{2/3}}d_2^{-1/4} \ll \sum\lm_{d_1 \leq z/g_1}d_1^{-1/2}\left( \fr{z}{g_2(g_1d_1)^{2/3}} \right)^{3/4}\\
&\ll z^{3/4}g_1^{-1/2}g_2^{-3/4}\sum\lm_{d_1 \leq z/g_1}d_1^{-1} \ll g_1^{-1/2}g_2^{-3/4} z^{3/4} \log{z} 
\end{split}
\end{equation*}
Substituting the above expression into (\ref{e2tmp}), we obtain
\begin{equation*}
E_2 \ll_{\eps} x^{1/2+\eps} z^{3/2} \log^2{z} \sum\lm_{\st{g_1 \leq z\\g_2 \leq z/g_1^{2/3}}}g_1^{-3/2}g_2^{-7/4} 
\ll_{\eps} x^{1/2+\eps} z^{3/2} \log^2{z}
\end{equation*}
Under the choice $z=x^{1/3-\eps}$, we get 
\begin{equation} \label{E_2} E_2 \ll_{\eps} x^{1-\eps/2} \end{equation}

Substituting $z=x^{1/3-\eps}$ and noting that $c(W)=1+o(1)$, we get from (\ref{M2}), (\ref{M21(1)}), (\ref{M21(2)}), (\ref{M22}), (\ref{E_2}) and (\ref{s2temp}) that
\begin{equation}
\label{S2}
S_2=\fr{2x}{W}(1+o(1)) B^6 R_2(P)
\end{equation}
where
\begin{equation}
\label{R2}
R_2(P)=\iint\lm_{T \ \ }s_1(3+\eps-s_1)Q_2^2(s_1,s_2) \,ds_2 \,ds_1 + 4\iint\lm_{T \ \ }s_1Q_1(s_1,s_2)Q_2(s_1,s_2) \, ds_2 \, ds_1
\end{equation}
\begin{rem}
In the above expression (\ref{R2}) for $R_2(P)$, there is an $\eps$ occuring. Since $\eps$ can be made arbitrarily small, we do not consider it for our computations.
\end{rem}

This completes the Proof of Proposition \ref{asymptote}.
\bigskip

\subsection{The Value of \tps{$\lb$}{}}
Proposition \ref{asymptote} gives us the asymptotic expressions for $S_1$ and $S_2$ in terms of the symmetric differentiable (in each variable) function $P$. There are two more functions, namely $Q_1$ and $Q_2$ defined in terms of P (See \ref{Q1Q2}). We make use of a computer program to optimise the choice of $P$ to determine a suitable value of $\lb$. \vskip 0.01in
Let $\eta(s)$ be as defined in (\ref{etadef}). Recall that
\begin{equation}
\label{obs} \eta(s)=\begin{cases} 1-2s/3 & \text{if} \ s \leq 3/5 \\ 3/2(1-s) & \text{if} \ s \geq 3/5 \end{cases} \end{equation}
Moreover, note that for any integrable function $F:T \to \mb{R}$, one can write
\begin{equation}
\label{intT} 
\iint\lm_{T_{s_1,s_2} \ \ } F(t_1,t_2) \, dt_2 \, dt_1 =\int\lm_{s_1}^{\eta(s_2)} \int\lm_{s_2}^{\eta(t_1)} F(t_1,t_2) \, dt_2 \, dt_1
\end{equation}
Using the above observations in (\ref{obs}) and (\ref{intT}) and using the expressions for $Q_1$ and $Q_2$ are in terms of $P$, from (\ref{Q1Q2}), we obtain
\begin{align}
Q_1(s_1,s_2)&=
\begin{dcases}
\int\lm_{s_1}^{\fr{3}{5}} \int\lm_{s_2}^{1-\fr{2}{3}t_1} P(t_1,t_2) \, dt_2 \, dt_1 + \int\lm_{\fr{3}{5}}^{1-\fr{2}{3}s_2} 
\int\lm_{s_2}^{\fr{3}{2}(1-t_1)}P(t_1,t_2) \, dt_2 \, dt_1 & \ 0 \leq s_1, \ s_2 \leq \fr{3}{5} \\
\int\lm_{s_1}^{\fr{3}{2}(1-s_1)} \int\lm_{s_2}^{1-\fr{2}{3}t_1} P(t_1,t_2) \, dt \,_2 dt_1 & \ 0 \leq s_1 \leq \fr{3}{5} \leq s_2 \leq 1\\
\int\lm_{s_1}^{1-\fr{2}{3}s_2} \int\lm_{s_2}^{\fr{3}{2}(1-t_1)} P(t_1,t_2) \, dt_2 \, dt_1 & \ 0 \leq s_2 \leq \fr{3}{5} \leq s_1 \leq 1
\end{dcases} \label{Q1int}\\
Q_2(s_1,s_2)&=
\begin{dcases}
\int\lm_{s_2}^{1-\fr{2}{3}s_1} P(s_1,t_2) \, dt_2 & \ 0 \leq s_1 \leq \fr{3}{5} \\
\int\lm_{s_2}^{\fr{3}{2}(1-s_1)}P(s_1,t_2) \, dt_2 & \ \fr{3}{5} \leq s_1 \leq 1
\end{dcases} \label{Q2int}
\end{align}
Applying the same observations (\ref{obs}), (\ref{intT}) to the expression for $R_1(P)$ and $R_2(P)$ from Proposition \ref{asymptote}, we obtain the following (rather complicated) expressions
\begin{equation}
\label{R1(P)int}
\begin{split}
R_1(P)&=\int\lm_{0}^{\fr{3}{5}} \int\lm_{0}^{\fr{3}{5}} \left( \int\lm_{s_1}^{\fr{3}{5}} \int\lm_{s_2}^{1-\fr{2}{3}t_1} P(t_1,t_2)\,  dt_2 \, dt_1 + \int\lm_{\fr{3}{5}}^{1-\fr{2}{3}s_2} \ \int\lm_{s_2}^{\fr{3}{2}(1-t_1)}P(t_1,t_2) \, dt_2 \, dt_1 \right)^2 \, ds_2 \, ds_1\\
& \quad + \int\lm_{0}^{\fr{3}{5}} \int\lm_{\fr{3}{5}}^{1-\fr{2}{3}s_1} \left( \int\lm_{s_1}^{\fr{3}{2}(1-s_1)} \int\lm_{s_2}^{1-\fr{2}{3}t_1} P(t_1,t_2) \, dt_2 dt_1 \right)^2 \, ds_2 \, ds_1\\
& \quad +\int\lm_{\fr{3}{5}}^1 \int\lm_{0}^{\fr{3}{2}(1-s_1)} \left( \int\lm_{s_1}^{1-\fr{2}{3}s_2} \int\lm_{s_2}^{\fr{3}{2}(1-t_1)} P(t_1,t_2) \, dt_2 \, dt_1 \right)^2 \, ds_2 \, ds_1
\end{split}
\end{equation}
\begin{equation}
\label{R2(P)int}
\begin{split}
R_2(P)&=\int\lm_{0}^{\fr{3}{5}}\int\lm_{0}^{1-\fr{2}{3}s_1} s_1(3-s_1) \left( \int\lm_{s_2}^{1-\fr{2}{3}s_1} P(s_1,t_2) \, dt_2 \right)^2 \, ds_2 \, ds_1  + \int\lm_{\fr{3}{5}}^1 \int\lm_{0}^{\fr{3}{2}(1-s_1)} s_1(3-s_1) \left( \int\lm_{s_2}^{\fr{3}{2}(1-s_1)}P(s_1,t_2) \, dt_2 \right)^2 \, ds_2 \, ds_1\\
& \quad + 4\int\lm_{0}^{\fr{3}{5}} \int\lm_{0}^{\fr{3}{5}} s_1 \left( \int\lm_{s_2}^{1-\fr{2}{3}s_1} P(s_1,t_2) \, dt_2 \right)
\left( \int\lm_{s_1}^{\fr{3}{5}} \int\lm_{s_2}^{1-\fr{2}{3}t_1} P(t_1,t_2) \, dt_2 \, dt_1 + \int\lm_{\fr{3}{5}}^{1-\fr{2}{3}s_2} \ \int\lm_{s_2}^{\fr{3}{2}(1-t_1)}P(t_1,t_2) \, dt_2 \, dt_1 \right) \, ds_2 \, ds_1 \\
&\quad + 4\int\lm_{0}^{\fr{3}{5}} \int\lm_{\fr{3}{5}}^1 s_1 \left( \int\lm_{s_2}^{1-\fr{2}{3}s_1} P(s_1,t_2) \, dt_2 \right) 
\left( \int\lm_{s_1}^{\fr{3}{2}(1-s_1)} \int\lm_{s_2}^{1-\fr{2}{3}t_1} P(t_1,t_2) \, dt \,_2 dt_1 \right) \, ds_2 \, ds_1 \\
& \quad + 4 \int\lm_{\fr{3}{5}}^1 \int\lm_{0}^{\fr{3}{5}} s_1 \left( \int\lm_{s_2}^{\fr{3}{2}(1-s_1)}P(s_1,t_2) \, dt_2 \right) \left( \int\lm_{s_1}^{1-\fr{2}{3}s_2} \int\lm_{s_2}^{\fr{3}{2}(1-t_1)} P(t_1,t_2) \, dt_2 \, dt_1 \right) \, ds_2 \, ds_1
\end{split}
\end{equation}
Keeping in mind that $P$ should be symmetric, we make the following choice for $P$. We set
\begin{equation} \label{Pchoice} P(x,y)=\sum\lm_{i,j=0}^7 a_{i,j} (x+y)^i (x^2+y^2)^j \end{equation}
The choice of the coefficients will optimised by means of a sage program. We evaluate $R_1(P)$ and $R_2(P)$ in terms of these coefficients. We first convert the coefficients $a_{i,j}$ into variables of the form $\{a_0,a_1,a_2, \dots , a_{63}\}$ via the bijective map $(i,j)\mapsto 8i+j$ and then integrate to obtain the expressions $R_1$ and $R_2$.
\begin{verbatim}
n=7
N=(n+1)^2
x=var("x")
y=var("y")
def pos(n,i,j):
    return (n+1)*i+j
aa=list(var('a_%d ' % i) for i in (0..(N)) )
p=0
for i in range(0,n+1):
    for j in range(0,n+1):
        p=p+aa[pos(n,i,j)]*(x+y)^i*(x^2+y^2)^j 
								
t1=var('t1')
t2=var('t2')
s1=var('s1')
s2=var('s2')
t=var('t')

P(a,b)=p(x=a,y=b)

Q11(s1,s2)=integrate(integrate(P(t1,t2),t2,s2,1-2/3*t1),t1,s1,3/5)
						+integrate(integrate(P(t1,t2),t2,s2,3/2-3/2*t1),t1,3/5,1-2/3*s2)
						
Q12(s1,s2)=integrate(integrate(P(t1,t2),t2,s2,1-2/3*t1),t1,s1,3/2-3/2*s2)

Q13(s1,s2)=integrate(integrate(P(t1,t2),t2,s2,3/2-3/2*t1),t1,s1,1-2/3*s2)
        
R1=integrate(integrate(Q11(s1,s2)^2,s2,0,3/5),s1,0,3/5)
+integrate(integrate(Q12(s1,s2)^2,s2,3/5,1-2/3*s1),s1,0,3/5)
+integrate(integrate(Q13(s1,s2)^2,s2,0,3/2-3/2*s1),s1,3/5,1)

Q21(s1,s2)=integrate(P(s1,t2),t2,s2,1-2/3*s1)  
  
Q22(s1,s2)=integrate(P(s1,t2),t2,s2,3/2-3/2*s1)

Q41(s1,s2)=s1*(3-s1)*Q21(s1,s2)^2+4*s1*Q11(s1,s2)*Q21(s1,s2)

Q42(s1,s2)=s1*(3-s1)*Q21(s1,s2)^2+4*s1*Q12(s1,s2)*Q21(s1,s2)

Q43(s1,s2)=s1*(3-s1)*Q22(s1,s2)^2+4*s1*Q13(s1,s2)*Q22(s1,s2)
    
R2=integrate(integrate(Q41(s1,s2),s2,0,3/5),s1,0,3/5)
+integrate(integrate(Q42(s1,s2),s2,3/5,1-2/3*s1),s1,0,3/5)
+integrate(integrate(Q43(s1,s2),s2,0,3/2-3/2*s1),s1,3/5,1)

R2=R2.expand()
R1=R1.expand()
\end{verbatim}
In this code, $R_1$ and $R_2$ have expressions for $R_1(P)$ and $R_2(P)$ stored in them. These expressions are actually quadratic forms in the coefficients $a_0,a_1, \dots, a_{63}$.\\ \vskip 0.02in

To show that $(\ref{thesum})$ is positive, we must have $\lb S_1>S_2$. By Proposition \ref{asymptote}, we must have

\begin{equation}
\label{L}
\lb>\fr{2  R_2(P)}{ R_1(P)}
\end{equation}
So, we need to minimize $R_2(P)/R_1(P)$. This ratio is entirely dependent on the choice of function $P$ and since it so happens with this choice of $P$ (See \ref{Pchoice}) that both $R_2$ and $R_1$ are quadratic forms in the coefficients 
$a_0,a_1, \dots, a_{63}$, we are essentially looking to minimize the ratio of two quadratic forms. This is a well known problem with the following solution.
\begin{thm}
\label{minratio}
Let $R_1=\mathbf{a}^T M_1\mathbf{a}$ and $R_2=\mathbf{a}^T M_2\mathbf{a}$ be two quadratic forms, where $M_1$ and $M_2$ are positive definite real symmetric matrices. Then the ratio $R_2/R_1$ is minimized when $\mathbf{a}$ is an eigenvector corresponding to the smallest eigenvalue of $M_1^{-1}M_2$. The value of the ratio at its minimum is this minimum eigenvalue. 
\end{thm}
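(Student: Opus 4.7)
The plan is to reduce the statement to the classical Rayleigh quotient minimization for a real symmetric matrix via a change of basis that uses the symmetric square root of $M_1$. Since $M_1$ is a positive definite real symmetric matrix, it has a unique positive definite symmetric square root $M_1^{1/2}$ with inverse $M_1^{-1/2}$. I would substitute $\mathbf{b} = M_1^{1/2}\mathbf{a}$, so that $\mathbf{a} = M_1^{-1/2}\mathbf{b}$, and observe that
\[
R_1 = \mathbf{a}^T M_1 \mathbf{a} = \mathbf{b}^T \mathbf{b}, \qquad R_2 = \mathbf{a}^T M_2 \mathbf{a} = \mathbf{b}^T N \mathbf{b},
\]
where $N = M_1^{-1/2} M_2 M_1^{-1/2}$. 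Since $M_2$ is symmetric, $N$ is a real symmetric matrix, and the ratio we want to minimize becomes
\[
\frac{R_2}{R_1} = \frac{\mathbf{b}^T N \mathbf{b}}{\mathbf{b}^T \mathbf{b}}.
\]

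Next, I would invoke the standard Rayleigh quotient result: for any real symmetric matrix $N$, the quotient $\mathbf{b}^T N \mathbf{b} / \mathbf{b}^T \mathbf{b}$ achieves its minimum value equal to the smallest eigenvalue $\mu_{\min}$ of $N$, and this minimum is attained precisely when $\mathbf{b}$ is a corresponding eigenvector. This follows immediately from the spectral theorem: writing $N = U^T D U$ for some orthogonal $U$ and $D = \operatorname{diag}(\mu_1,\dots,\mu_n)$ with $\mu_1 \leq \dots \leq \mu_n$, and setting $\mathbf{c} = U\mathbf{b}$, one has $\mathbf{b}^T N \mathbf{b} / \mathbf{b}^T \mathbf{b} = \sum \mu_i c_i^2 / \sum c_i^2 \geq \mu_1$, with equality when $\mathbf{c}$ is supported on the $\mu_1$-eigenspace.

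Finally, I would transfer this conclusion back to $M_1^{-1}M_2$ by observing that $N$ and $M_1^{-1}M_2$ are similar:
\[
M_1^{-1} M_2 = M_1^{-1/2}\bigl(M_1^{-1/2} M_2 M_1^{-1/2}\bigr) M_1^{1/2} = M_1^{-1/2} N M_1^{1/2},
\]
so they share the same spectrum. Moreover, if $N\mathbf{b} = \mu_{\min}\mathbf{b}$, then $\mathbf{a} = M_1^{-1/2}\mathbf{b}$ satisfies $M_1^{-1}M_2 \mathbf{a} = \mu_{\min}\mathbf{a}$, giving the eigenvector of $M_1^{-1}M_2$ that achieves the minimum ratio, with minimum value $\mu_{\min}$ itself. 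No step here is a real obstacle; the only minor care needed is in verifying that the spectrum and eigenvector correspondence transport correctly under the similarity, which the computation above handles cleanly.
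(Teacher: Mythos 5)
Your proof is correct and complete: the reduction via the symmetric square root $M_1^{1/2}$ to the ordinary Rayleigh quotient for the symmetric matrix $N = M_1^{-1/2}M_2M_1^{-1/2}$, followed by the similarity $M_1^{-1}M_2 = M_1^{-1/2}NM_1^{1/2}$ to transport both the spectrum and the minimizing eigenvector, is exactly the standard argument for the generalized Rayleigh quotient. The paper itself does not give a proof but merely cites Maynard's Lemma 7.3, and your argument is the same in substance as that reference; there is nothing to add or repair.
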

\begin{proof}
For a proof, see Lemma 7.3 \cite[Pg 20]{JM}.
\end{proof}
We minimize this ratio using a sage program. We will let $A$ to be the matrix corresponding to the form $R_1$ and $B$ corresponding to $R_2$. We define them as follows
$$ A_{i,j}=\fr{\partial^2 R_1}{\partial a_i \partial a_j} \quad \text{and} \quad B_{i,j}=\fr{\partial^2 R_2}{ \partial a_i \partial a_j} $$
Then it is clear that $\ds \mathbf{a}^T A \mathbf{a}=2R_1$ and $\ds \mathbf{a}^T B \mathbf{a}=2R_2$. Moreover, the matrices $A$ and $B$ are clearly real symmetric matrices. If one looks at the expressions (\ref{s11}) and (\ref{s22}) for $S_1$ and $S_2$ respectively, it clear why both the matrices $A$ and $B$ (for $R_1(P)$ and $R_2(P)$ respectively) are positive definite. \vskip 0.01in
So by Theorem \ref{minratio}, the the minimum value of $R_2(P)/R_1(P)$ is the smallest eigenvalue of $C$, which is computed below.

\begin{verbatim}
A=matrix(QQ,N)
B=matrix(QQ,N)
for i in range(0,N):
    for j in range(0,N):
        A[i,j]=derivative(derivative(R1,aa[i]),aa[j])
        B[i,j]=derivative(derivative(R2,aa[i]),aa[j])
C=A.inverse()*B
min(C.eigenvalues())
\end{verbatim}
\[
6.290731135292344?
\]
Therefore, from (\ref{L}), it follows that (\ref{thesum}) is positive for any
\begin{equation}
\label{Lb}
\lb>12.5814622705847
\end{equation}
\vskip 0.3in
\begin{rem}
It is of course possible to reduce the value of $\lb$ somewhat further, if we take $P(x,y)$ to be of a higher degree. But the improvement obtained will be very little and $\lb$ is not likely to go below $12$. Plus, the process can become increasingly time consuming for any computer program as we increase the degree of the polynomial $P(x,y)$.
\end{rem}
\vskip 0.2in
\subsection*{Acknowledgements}
The authors are thankful to the Institute of Mathematical Sciences, Chennai, where all the work has been carried out, for its hospitality and for providing an ideal research atmosphere.
\vskip 0.3in

\end{document}